\theoremstyle{plain}
 \newtheorem{thm}{Theorem}[section]
 \newtheorem{prop}[thm]{Proposition}
 \newtheorem{lem}[thm]{Lemma}
 \newtheorem{cor}[thm]{Corollary}
 \newtheorem{lem'}{``Lemma''}
 \newtheorem*{claim}{Claim}
\theoremstyle{definition}
 \newtheorem{defn}[thm]{Definition}
\theoremstyle{remark}
 \newtheorem{rmk}[thm]{Remark}
 \numberwithin{equation}{section}
\newcommand{\Q}{{\mathbb Q}}
\newcommand{\Z}{{\mathbb Z}}
\newcommand{\C}{{\mathbb C}}
\newcommand{\F}{{\mathbb F}}
\author{\bfseries Matti W\"urthen}
\title{Vector bundles with numerically flat reduction on rigid analytic varieties and $p$-adic local systems}
\begin{document}
\maketitle
\begin{abstract}
We show how to functorially attach continuous $p$-adic representations of the profinite fundamental group to vector bundles with numerically flat reduction on a proper rigid analytic variety over $\C_p$. This generalizes results by Deninger and Werner for vector bundles on smooth algebraic varieties. Our approach uses fundamental results on the pro-\'etale site of a rigid analytic variety introduced by Scholze. This enables us to get rid of the smoothness condition and to work in the analytic category.\\
Moreover, under some mild conditions, the functor we construct gives a full embedding of the category of vector bundles with numerically flat reduction into the category of continuous $\C_p$-representations. This provides new insights into the $p$-adic Simpson correspondence in the case of a vanishing Higgs-field. 
\end{abstract}
\tableofcontents
\section{Introduction}
In complex geometry the Corlette-Simpson correspondence is a very elaborate theory relating $\C$-local systems on a compact K\"ahler manifold to semistable Higgs bundles with vanishing Chern classes (see \cite{Sim}). The case of a vanishing Higgs field goes back to a theorem of Narasimhan-Seshadri which gives an equivalence of irreducible unitary representations of the topological fundamental group and stable vector bundles of degree $0$ on a compact Riemann surface (see \cite{NS}). This was generalized by Mehta-Ramanathan to general projective manifolds (\cite{MR}) and then by Uhlenbeck-Yau to compact K\"ahler manifolds (see \cite[\S 8]{UY}). The results by Uhlenbeck-Yau give an equivalence of irreducible unitary representations and stable vector bundles $E$ satisfying $\int \omega^{dimX-1}\wedge c_{1}(E)=\int \omega^{dimX-2}\wedge c_{2}(E)=0$, where $\omega$ is a K\"ahler-class.\\
For $p$-adic varieties Faltings in \cite{Fa} has proposed a theory relating (small) Higgs bundles and so called (small) generalized representations (see in particular \cite[Theorem 6]{Fa}). This is expanded in the work \cite{AGT}. Moreover, in \cite{LZ}, using the methods from \cite{Sch1} and \cite{KLiu}, Liu and Zhu have constructed a functor from the category of $\Q_p$-local systems on a smooth rigid analytic variety $X$ over a finite extension $K$ of $\Q_p$, to the category of Higgs bundles on $X_{\hat{\overline{K}}}$ (see \cite[Theorem 2.1]{LZ}). What is still missing from these approaches is the other direction. In Faltings's approach the problem is to determine the Higgs-bundles whose associated generalized representations come from actual representations. One may expect that, similarly to the complex situation, this should be related to a semistability condition on the Higgs bundle. In the zero Higgs field case, such a semistability condition was found by Deninger and Werner (cf. \cite{DW1}, \cite{DW3}) for vector bundles on proper smooth algebraic varieties over $\bar{\Q}_p$. Namely, they construct a functor from the category of vector bundles, which possess an integral model whose special fiber is a numerically flat vector bundle, to the category of continuous $\C_p$-representations of the \'etale fundamental group. We remark here that on a complex compact K\"ahler manifold numerically flat bundles are precisely the semistable bundles satisfying $\int \omega^{dimX-1}\wedge c_{1}(E)=\int \omega^{dimX-2}\wedge c_{2}(E)=0$ (see \cite[Theorem 1.18]{DPS}). So the condition one is familiar with from complex geometry shows up here as a condition on the special fiber of a model.\\
The main goal of this article is to develop a new approach to the Deninger-Werner correspondence via the pro-\'etale site introduced by Scholze in \cite{Sch1}. Using this approach we can get rid of many assumptions in the results of \cite{DW3}. In particular we do not need any smoothness of the variety $X$. Also, we don't need to assume that it is defined over a finite extension of $\Q_{p}$. Moreover our construction now takes place in the analytic category, so that we can assume $X$ to be any proper connected rigid analytic variety over $\C_p$. We consider vector bundles $E$ on $X$, for which there exists a proper flat formal scheme $\mathcal{X}$ with generic fiber $X$ over $Spf(\mathcal{O}_{\C_p})$ and a vector bundle $\mathcal{E}$ on $\mathcal{X}$ with generic fiber $E$, such that $\mathcal{E}\otimes \overline{\mathbb{F}}_{p}$ is a numerically flat vector bundle. These kinds of vector bundles form a tensor category $\mathcal{B}^{s}(X)$.  Our main result is the following
\begin{thm}
Let $X$ be a proper connected seminormal rigid analytic variety over $\C_p$. Then there is a fully faithful functor
\begin{center}
$
DW:\mathcal{B}^{s}(X)\to Rep_{\pi_1(X)}(\C_p)
$
\end{center}
which is exact and compatible with tensor products, duals, internal homs and exterior products. 
\end{thm}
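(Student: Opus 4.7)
The plan is to build the functor $DW$ inside the pro-\'etale site $X_{\mr{pro\acute{e}t}}$ of Scholze \cite{Sch1}, exploiting two features of numerically flat bundles on a proper $\overline{\F}_p$-scheme: they are Frobenius-stable, and by the classical input going back to Langer and Mehta--Nori (already used in a different form by Deninger--Werner) they become trivial modulo any fixed power of $p$ after pulling back along a suitable finite surjective cover. Given $(\mc{X}, \mc{E})$ with $\mc{E}_s := \mc{E}\otimes\overline{\F}_p$ numerically flat, I would use these features to construct, for each $n\geq 1$, a pro-\'etale cover $Y_n\to X$ on which $\mc{E}\otimes_{\mc{O}_{\mc{X}}}\hat{\mc{O}}_X^{+}/p^n$ becomes free. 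Assembling the $Y_n$ into a compatible tower and taking the inverse limit should produce a finite locally free $\hat{\mc{O}}_X^{+}$-module on $X_{\mr{pro\acute{e}t}}$ whose generic fibre, after inverting $p$, is pro-\'etale locally trivial of finite rank; such an object corresponds by pro-\'etale descent to a continuous $\C_p$-representation of $\pi_1(X)$, and this defines $DW(E)$.

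The tensor compatibility, compatibility with duals, internal homs and exterior products, and exactness all follow from the construction, because formation of integral models, reduction modulo $p^n$, and pro-\'etale pullback commute with these tensorial operations. For full faithfulness, the standard adjunction $\mr{Hom}(E_1,E_2)=H^0(X,E_1^{\vee}\otimes E_2)$ (and analogously on the representation side) reduces the problem to showing
\[
H^0(X,E)\;=\;H^0\bigl(\pi_1(X),DW(E)\bigr)\qquad\text{for every }E\in\mc{B}^s(X).
\]
By the construction of $DW$, the right-hand side is identified with $H^0(X_{\mr{pro\acute{e}t}},E\otimes_{\mc{O}_X}\hat{\mc{O}}_X)$. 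The seminormality hypothesis intervenes precisely here: it is what guarantees that $H^0(X,\mc{O}_X)\to H^0(X_{\mr{pro\acute{e}t}},\hat{\mc{O}}_X)$ is an isomorphism, so that no spurious global sections appear upon completion. The general comparison $H^0(X,E)=H^0(X_{\mr{pro\acute{e}t}},E\otimes\hat{\mc{O}}_X)$ is then reduced, by filtering $\mc{E}_s$ by numerically trivial subquotients, to the trivial-bundle case.

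The principal obstacle I expect is the first step: producing the compatible tower $(Y_n)$ of trivializing pro-\'etale covers, controlling the uniformity of the approximate trivializations against the $p^n$-scale, and showing that the resulting $\C_p$-local system is independent of the choice of integral model $(\mc{X},\mc{E})$. This is the genuinely nontrivial input and uses both the Frobenius-stability of numerical flatness (to keep the trivializations consistent when one passes from level $n$ to level $n+1$) and the almost purity theorem (to upgrade finite covers of the special fibre to pro-\'etale covers of the generic fibre). Once this construction is in place, the remaining assertions of the theorem should follow essentially formally from pro-\'etale descent and Scholze's comparison theorems, with seminormality entering only at the level of the $H^0$-comparison needed for full faithfulness.
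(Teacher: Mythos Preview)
Your overall architecture matches the paper's: pull $\mathcal{E}$ back to an $\hat{\mathcal{O}}_X^{+}$-module on $X_{\mathrm{pro\acute{e}t}}$, show it is trivialized on a profinite \'etale cover, and read off a representation. But two of your key steps diverge from the paper in ways that leave real gaps.

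\textbf{Trivialization.} You propose to trivialize $\mathcal{E}/p^n$ on a ``finite surjective cover'' and then invoke almost purity to upgrade this to a pro-\'etale cover of $X$. Almost purity does not do this: a finite surjective (or proper surjective) cover of the special fibre need not become finite \'etale on the generic fibre, and there is no mechanism for turning it into a pro-\'etale cover. The paper instead proves (Theorem~\ref{thm31}) that a numerically flat bundle on the special fibre is trivialized, modulo $p$, by the composition of a \emph{finite \'etale} cover and a power of absolute Frobenius. The finite \'etale part lifts to a finite \'etale cover of $\mathcal{X}$ and hence of $X$; the Frobenius part is dealt with by passing to the tilt $\hat{\mathcal{O}}_{X^\flat}^{+}$, where Frobenius is invertible (Lemma~\ref{flem}). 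One then climbs from $p$ to $p^n$ using the primitive comparison theorem to kill the obstruction classes in $H^1(\mathcal{O}_X^{+}/p)$ on further finite \'etale covers (Theorem~\ref{tf}). Your sketch does not isolate this Frobenius/tilt step, and without it the argument stalls.

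\textbf{Full faithfulness.} Your reduction to $H^0(X,E)\cong H^0(X_{\mathrm{pro\acute{e}t}},E\otimes\hat{\mathcal{O}}_X)$ is the right target, but your proposed proof---filtering $\mathcal{E}_s$ by numerically trivial subquotients and reducing to the trivial bundle---does not work as stated. A filtration on the special fibre $\mathcal{E}_s$ has no reason to lift to a filtration of $E$ on the generic fibre, and even if one could lift it, the subquotients would only have trivial reduction, not be trivial bundles on $X$. The paper does not attempt any such d\'evissage; it simply invokes the Kedlaya--Liu result \cite[Corollary~8.2.4]{KLiu} that for seminormal $X$ the functor $E\mapsto E\otimes\hat{\mathcal{O}}_X$ from vector bundles to $\hat{\mathcal{O}}_X$-modules is fully faithful (Theorem~\ref{ff}). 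This is precisely where seminormality enters, and it handles all $E$ at once without any filtration argument.
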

Here $Rep_{\pi_1(X)}(\C_p)$ denotes the category of continuous representations of the \'etale fundamental group $\pi_{1}(X)=\pi_{1}^{\acute{e}t}(X, x)$ (for a fixed base point $x$) on finite dimensional $\C_p$-vector spaces.\\
Using the methods developed in \cite{Sch1} we are able to work for the most part on the generic fiber, by which we can avoid the complications arising in \cite{DW3} in the study of integral models. At the same time the results from \cite{Sch1} and \cite{KLiu} allow us to derive the full faithfulness of the Deninger-Werner functor, which could not be seen from the construction in \cite{DW3}. We note that this last point is close in spirit to the article \cite{Xu}, where the constructions by Deninger and Werner are analyzed via the Faltings topos and full faithfulness is established in the curve case.
\\[0.2in]
We want to remark that one of the main open problems is still to find out which vector bundles admit a model with numerically flat reduction. We have essentially nothing new to add to this, but wish to draw the reader's attention to the recent preprint \cite{HW} which reports on progress on this problem.\\
\\[0.2in]
Let us sketch how we go about proving the above theorem. The category $Rep_{\pi_1(X)}(\mathcal{O}_{\C_p})$ of continuous representations on finite free $\mathcal{O}_{\C_p}$-modules is equivalent the category of locally free $\hat{\mathcal{O}}_{\C_p}$-sheaves (i.e. local systems with coefficients in $\mathcal{O}_{\C_p}$, see Definition \ref{defn2}). There is a fully faithful functor from the latter category to the category of locally free $\hat{\mathcal{O}}_{X}^{+}$-modules, where $\hat{\mathcal{O}}_{X}^{+}$ is the completed integral structure sheaf on the pro-\'etale site of $X$, given by $\mathbb{L}\mapsto \mathbb{L}\otimes \hat{\mathcal{O}}_{X}^{+}$.\footnote{We remark that locally free (almost) $\hat{\mathcal{O}}_{X}^{+}$-modules are analogous to the objects called generalized representations studied in \cite{Fa} and \cite{AGT}.} We note here that the analogous statement on the Faltings topos is also the starting point of Faltings's $p$-adic Simpson correspondence.
\\
One can then show (see Corollary \ref{full}) that the essential image of the above functor is given by the $\hat{\mathcal{O}}_{X}^{+}$-modules which become trivial on a profinite \'etale covering of $X$. If $\mathcal{E}$ is a vector bundle on a formal scheme $\mathcal{X}$ over $Spf(\mathcal{O}_{\C_p})$, with generic fiber $X$, we can form its pullback $\mathcal{E}^{+}$ to the pro-\'etale site of $X$. Its $p$-adic completion $\hat{\mathcal{E}}^{+}$ is an $\hat{\mathcal{O}}^{+}_{X}$-module. We will then show the following:
\begin{thm}
Let $\mathcal{X}$ be a proper flat connected formal scheme over $Spf(\mathcal{O}_{\C_p})$ and $\mathcal{E}$ a vector bundle on $\mathcal{X}$ with numerically flat reduction. Then $\hat{\mathcal{E}}^{+}$ is trivialized by a profinite \'etale cover.
\end{thm}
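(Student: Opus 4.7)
The plan is to construct a compatible tower of finite étale covers $X_n \to X$ on which the reduction $\mathcal{E}^+/p^n$ becomes trivial, and then to form the inverse limit $X_\infty = \varprojlim_n X_n$, which is by construction a profinite étale cover of $X$. The compatible trivializations modulo $p^n$ then glue to a trivialization of $\hat{\mathcal{E}}^+$ on $X_\infty$ thanks to the $p$-adic completeness of $\hat{\mathcal{O}}_X^+$ and the (almost) vanishing of higher cohomology of $\hat{\mathcal{O}}_X^+$ on affinoid perfectoid objects, both of which are fundamental outputs of \cite{Sch1}.

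The base case is the reduction modulo $p$: here $\mathcal{E}^+/p$ is pulled back from the numerically flat bundle $\overline{\mathcal{E}}$ on the special fiber $\overline{\mathcal{X}}$. I would invoke the characteristic-$p$ input underlying \cite{DW3}, namely the extension of the Lange--Stuhler theorem via Langer's theory of strongly semistable sheaves with trivial Chern classes, to find a morphism $\overline{Y} \to \overline{\mathcal{X}}$ --- a composition of a finite étale cover with some iterate of the absolute Frobenius --- along which $\overline{\mathcal{E}}$ becomes trivial. The Frobenius iterate is invisible on the generic fiber, and the étale part lifts (by formal-rigid GAGA and Lütkebohmert-type results, as reused in \cite{Sch1}) to a finite étale cover $X_1 \to X$ on which $\mathcal{E}^+/p$ is trivial.

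For the inductive step, I would lift a trivialization of $\mathcal{E}^+/p^n$ on $X_n$ to one of $\mathcal{E}^+/p^{n+1}$ on a further finite étale cover $X_{n+1} \to X_n$ by killing the obstruction class in
\[
H^1\bigl(X_{n,\,\mathrm{pro\acute{e}t}},\; \underline{\mathrm{End}}(\overline{\mathcal{E}}^+) \otimes p^n \hat{\mathcal{O}}_{X_n}^+/p^{n+1}\bigr).
\]
Via the (almost) isomorphism $p^n \hat{\mathcal{O}}^+/p^{n+1} \simeq \hat{\mathcal{O}}^+/p$, this reduces to mod-$p$ cohomology of $\underline{\mathrm{End}}(\overline{\mathcal{E}})$; since the endomorphism bundle of a numerically flat bundle is again numerically flat, the mod-$p$ construction applied to $\underline{\mathrm{End}}(\mathcal{E})$ kills the obstruction on a further finite étale cover. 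The main obstacle I anticipate is confirming at each stage that these obstructions really can be killed by \emph{finite} étale covers rather than by more general pro-étale ones, and that the choices can be arranged compatibly so that the tower $\{X_n\}$ is well-defined. This rests on a careful comparison between pro-étale cohomology of $\hat{\mathcal{O}}^+/p$ on $X_n$ and étale cohomology of the numerically flat sheaves on an integral model of $X_n$, which is precisely where Scholze's pro-étale machinery provides leverage not available in the original setup of \cite{DW3}.
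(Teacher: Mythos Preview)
Your inductive strategy for lifting trivializations modulo $p^n$ is close to what the paper does, but there is a genuine gap in your base case: the claim that ``the Frobenius iterate is invisible on the generic fiber'' is not correct, and this is precisely the point where the paper introduces a new idea.

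Concretely, the characteristic-$p$ input (Theorem~\ref{thm31}, which extends the Deninger--Werner/Lange--Stuhler result to the non-projective case via $v$-descent on perfect schemes) only gives you a finite \'etale cover $f:Y_0\to \mathcal{X}\otimes\mathcal{O}_{\C_p}/p$ together with an integer $e\geq 0$ such that $F_{Y_0}^{e*}f^{*}(\mathcal{E}/p)$ is trivial. Lifting $f$ to a finite \'etale cover $Y\to X$ of the generic fiber, what you obtain on $Y$ is that $\mathcal{E}^{+}/p\vert_{Y}$ is \emph{$F^{e}$-trivial}, i.e.\ trivial after pulling back along the $e$-th power of the Frobenius on $\mathcal{O}^{+}_{X}/p$. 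This Frobenius is not an isomorphism of sheaves of rings (only a surjection), so Frobenius pullback is not an equivalence on locally free $\mathcal{O}^{+}_{X}/p$-modules; there is no reason for $\mathcal{E}^{+}/p\vert_{Y}$ itself to be trivial, and hence no cover $X_1$ as you describe. The paper's mechanism for getting rid of the Frobenius is Lemma~\ref{flem}: one tilts and uses that Frobenius \emph{is} an isomorphism on $\hat{\mathcal{O}}^{+}_{X^{\flat}}$, which converts an $F^{e}$-trivial $\mathcal{O}^{+}_{X}/p$-module into an $\hat{\mathcal{O}}^{+}_{X^{\flat}}/t^{p^{e}}$-module that is trivial modulo $t$. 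One then runs the deformation argument (obstructions in $H^{1}(\mathcal{O}^{+}/p)$, killed up to almost-zero on finite \'etale covers via the primitive comparison theorem) on the tilted side to make this module trivial modulo $t^{p^{e}}$, and untilts to conclude that $\mathcal{E}^{+}/p$ itself becomes trivial on a finite \'etale cover.

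A secondary point: in your inductive step you place the obstruction in $H^{1}$ of $\underline{\mathrm{End}}(\overline{\mathcal{E}})\otimes\mathcal{O}^{+}/p$ and then appeal to numerical flatness of $\underline{\mathrm{End}}(\mathcal{E})$. The paper avoids this by \emph{first} arranging $\mathcal{E}^{+}/p$ to be honestly trivial (as above), so that the obstruction to lifting from $p^{n}$ to $p^{n+1}$ lies simply in $M_{r}\bigl(H^{1}(\mathcal{O}^{+}_{X}/p)\bigr)$, which is almost $M_{r}\bigl(H^{1}_{\acute{e}t}(X,\F_{p})\otimes\mathcal{O}_{\C_p}/p\bigr)$ by Theorem~\ref{cisoo} and hence is killed (up to an $\epsilon$) on a finite \'etale cover. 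Note also the almost-mathematics subtlety: one does not get exact trivialization modulo $p^{n}$ but only modulo $p^{\,n-\sum\epsilon_k}$, which still suffices after taking the limit.
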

In the terminology of \cite{Xu} (see Definition \ref{wt}) this proves that all vector bundles with numerically flat reduction are Weil-Tate. This was shown in loc. cit. for the case of curves using the constructions from \cite{DW1}.\\ 
This theorem will also be used to construct \'etale parallel transport on $\mathcal{E}$, as in \cite{DW3}.\\
The proof of the theorem follows very much the path laid out in \cite{DW3}. In particular one shows that a vector bundle $\mathcal{E}$ with numerically flat reduction is trivialized modulo $p$ after pullback along a composition of a finite \'etale cover and some power of the absolute Frobenius map. One is then faced with two problems: One is dealing with the Frobenius pullback and the other is to inductively get rid of obstructions preventing the bundle in question to be trivial modulo $p^{n}$. Both problems become much simpler after pulling back to the pro-\'etale site (Theorem \ref{tf}).
\\
Let us make some remarks on the contents of the individual sections. Section 2 is a recollection of the results from \cite{Sch1} which are needed in this article. In section 3 we first show how to attach continuous $\mathcal{O}_{\textbf{C}}$-representations to integral vector bundles on the pro-\'etale site whose $p$-adic completion is trivialized by a finite \'etale cover (Theorem \ref{thm21}). These constructions work for proper rigid analytic varieties over any complete algebraically closed perfectoid field $\textbf{C}$. We then specialize to the case $\textbf{C}=\C_p$ where we have the alternative viewpoint of local systems. Finally, we show that any $\mathcal{O}^{+}$-module which is trivialized modulo $p$ by a Frobenius pullback will give rise to a representation (Theorem \ref{tf}).\\
Section 4 then mostly deals with vector bundles with numerically flat reduction. We first generalize the results from \cite[\S 2]{DW3} on numerically flat vector bundles on projective schemes over finite fields to the non-projective case (Theorem \ref{thm31}). The proof of this is an application of $v$-descent for vector bundles on perfect schemes as established in \cite{BS}. Then the results from section 3 are used to construct the Deninger-Werner functor for vector bundles with numerically flat reduction. We then show that the discussion from section 3 can be improved to construct an \'etale parallel transport functor for the given bundle and that our construction indeed recovers the results from \cite{DW3} (Theorem \ref{dwthm}). \\
We finish by showing that the cohomology of the constructed local systems come with a Hodge-Tate spectral sequence. Note that this has also been treated in \cite{Xu} via the Faltings topos building on the work of Abbes, Gros and Tsuji (see in particular \cite[Proposition 11.7]{Xu} and \cite[Proposition 11.8]{Xu}).
\section*{Acknowledgements}
I would like to thank Annette Werner for showing interest in my work and for making many remarks and suggestions on an earlier version of this paper.\\
I wish to thank Shizhang Li for pointing out an error in an earlier version of this paper and for many fruitful exchanges.\\
I would like to thank Bhargav Bhatt for drawing my attention to \cite{guo} and suggesting that the results in loc. cit. should yield the degeneration of the Hodge-Tate spectral sequence.\\
Lastly, I wish to thank Lucas Mann, Peter Scholze and Adrian Zorbach for discussions.\\
This work is part of my Phd-thesis at the University of Duisburg-Essen and was financially supported by the SFB/TR 45 ''Periods, Moduli Spaces and Arithmetic of Algebraic Varieties'' of the DFG (German Research Foundation).

\section{Preliminaries}
\subsection{Notation}
Throughout this paper we will work with proper rigid analytic varieties (viewed as a full subcategory of the category of adic spaces (see \cite{Hub})) over an algebraically closed perfectoid field $\textbf{C}$ of characteristic $0$. We denote by $\mathcal{O}_{\textbf{C}}\subset \textbf{C}$ its ring of integers. If $\Gamma$ is the value group of $\textbf{C}$, we denote by $log\Gamma\subset \mathbb{R}$ the induced subset obtained by taking the logarithm with base $\vert p\vert$. Then for any $\epsilon\in log\Gamma$ we choose an element $p^{\epsilon}\in \textbf{C}$, which satisfies $\vert p^{\epsilon}\vert=\vert p \vert^{\epsilon}$.\\
Moreover we fix a pseudouniformizer $t$ in the tilt $\mathcal{O}_{\textbf{C}^{\flat}}$ such that $t^{\sharp}=p$.\\
Whenever we speak about almost mathematics we mean almost mathematics with respect to the maximal ideal $\mathfrak{m}\subset \mathcal{O}_{\textbf{C}}$.\\
If $X$ is a rigid analytic space over a non-archimedean field $K$, then by formal model we mean an admissible formal scheme over $\mathcal{O}_{K}$ with generic fiber $X$. 
\subsection{The pro-\'etale site of a rigid analytic space}
Let $X$ be a locally noetherian adic space over $Spa(\Q_p, \Z_p)$. The pro-\'etale site $X_{pro\acute{e}t}$ of $X$ was introduced in \cite{Sch1}. The idea is that one wants to extend the usual \'etale site to allow inverse limits along finite \'etale morphisms. Since inverse limits along affinoid morphism may not be well behaved in the category of adic spaces one simply considers formal filtered pro-systems $\varprojlim_{i\in I}Y_{i}\to X$ of \'etale maps, such that there exists some $i_{0}$ such that the transition maps $Y_{i}\to Y_{i'}$ are all finite \'etale for $i'\geq i_{0}$. We refer to \cite{BMS} \S 5 for the precise definitions.
\begin{rmk}
There is also a newer version of the pro-\'etale site. See for example \cite[Definition 3.1.]{leb} for a definition for analytic adic spaces. We will only use the original version of the pro-\'etale site, as this simplifies the exposition in our case. The version we use is now also sometimes referred to as the flattening pro-\'etale site.
\end{rmk}
Every \'etale morphism is pro-\'etale, which gives a canonical projection 
\begin{center}
$\nu:X_{pro\acute{e}t}\to X_{\acute{e}t}$.
\end{center}
There is also a natural projection of sites $\lambda:X_{pro\acute{e}t}\to X_{an}$, where $X_{an}$ denotes the analytic topology of open subsets of $X$, given by the composition 
\begin{center}
$X_{pro\acute{e}t}\xrightarrow{\nu}X_{\acute{e}t}\to X_{an}$. 
\end{center}
\begin{defn}\cite[Definition 4.1]{Sch1}\label{defn1}\\
We have the following structure sheaves\\
\begin{itemize}
\item $\mathcal{O}_{X}^{+}:=\nu^{-1}\mathcal{O}^{+}_{X_{\acute{e}t}}$,
$\mathcal{O}_{X}=\nu^{-1}\mathcal{O}_{X_{\acute{e}t}}$
\\
\item $\hat{\mathcal{O}}_{X}^{+}=\varprojlim_n \mathcal{O}_{X}^{+}/p^{n}$, 
$\hat{\mathcal{O}}_{X}=\hat{\mathcal{O}}_{X}^{+}[\frac{1}{p}]$  (completed structure sheaves)
\\
\item $\hat{\mathcal{O}}_{X^\flat}^{+}=\varprojlim_{\Phi}\mathcal{O}_{X}^{+}/p$ (tilted structure sheaf)
\end{itemize}
where $\phi$ denotes the (surjective) Frobenius on $\mathcal{O}^{+}_{X}/p$.
\end{defn}
\begin{rmk}
Let $f:X\to Y$ be a morphism of adic spaces over $Spa(\Q_p, \Z_p)$. Then the map $f_{pro\acute{e}t}^{-1}\mathcal{O}^{+}_{Y}\to \mathcal{O}^{+}_{X}$ extends to the $p$-adic completion, so we get a map $f_{pro\acute{e}t}^{-1}\hat{\mathcal{O}}_{Y}^{+}\to \hat{\mathcal{O}}_{X}^{+}$. Hence we can define the pullback of a finite locally free $\hat{\mathcal{O}}^{+}_{Y}$-module $\mathcal{F}$ along $f$ in the usual way.
\end{rmk}
Now let $\mathcal{X}$ be an admissible formal scheme over $Spf(\mathcal{O}_{\textbf{C}})$. By \cite[Proposition 1.9.1]{Hu} there is an adic space $X$ over $Spa(\textbf{C}, \mathcal{O}_{\textbf{C}})$, which comes with a specialization map\\$sp:(X, \mathcal{O}_{X_{an}}^{+})\to (\mathcal{X}, \mathcal{O}_{\mathcal{X}})$ of locally ringed spaces, and is such that for any morphism of locally ringed spaces $f:(Z, \mathcal{O}_{Z_{an}}^{+})\to (\mathcal{X}, \mathcal{O}_{\mathcal{X}})$ where $Z$ is an adic space over $Spa(\textbf{C}, \mathcal{O}_{\textbf{C}})$, there is a unique morphism $g:Z\to X$ such that $f=sp \circ g$. Moreover, $X$ is canonically isomorphic to the adic space associated to Raynaud's rigid analytic generic fiber of $\mathcal{X}$. We call $X$ the generic fiber of $\mathcal{X}$.\\
We again have a canonical projection $\mu=sp \circ \lambda:X_{pro\acute{e}t}\to \mathcal{X}_{Zar}$. There is a natural map $\mu^{-1}\mathcal{O}_{\mathcal{X}}\to \mathcal{O}_{X}^{+}$. Hence for any $\mathcal{O}_{\mathcal{X}}$-module $\mathcal{E}$, we can define the associated $\mathcal{O}_{X}^{+}$-module $\mathcal{E}^{+}:=\mu^{-1}\mathcal{E}\otimes_{\mu^{-1}\mathcal{O}_{\mathcal{X}}}\mathcal{O}^{+}_{X}$. One easily proves the following:
\begin{lem}\label{lem11}
Let $f:\mathcal{X}\to \mathcal{Y}$ be a morphism of admissible formal schemes over $Spf(\mathcal{O}_{\textbf{C}})$. Then for any $\mathcal{O}_{\mathcal{Y}}$-module $\mathcal{E}$ there is a canonical isomorphism $(f_{\textbf{C}})_{pro\acute{e}t}^{*}(\mathcal{E}^{+})\cong (f^{*}\mathcal{E})^{+}$.
\end{lem}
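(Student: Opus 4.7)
The plan is to reduce the lemma to the commutativity of the square of ringed sites (together with compatibility of structure sheaves)
\[
\begin{CD}
X_{pro\acute{e}t} @>{\mu_{\mathcal{X}}}>> \mathcal{X}_{Zar} \\
@V{(f_{\textbf{C}})_{pro\acute{e}t}}VV @VV{f}V \\
Y_{pro\acute{e}t} @>>{\mu_{\mathcal{Y}}}> \mathcal{Y}_{Zar}
\end{CD}
\]
after which the identification becomes a purely formal manipulation of inverse images and tensor products.

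First, I would verify commutativity of this square. Since $\mu = sp\circ\lambda$, this splits into two pieces: the projection $\lambda$ from the pro-\'etale to the analytic topology is evidently functorial in the adic space, and the compatibility $f\circ sp_{\mathcal{X}} = sp_{\mathcal{Y}}\circ f_{\textbf{C}}$ follows from the universal property of $sp_{\mathcal{Y}}$ recalled after \cite[Proposition 1.9.1]{Hu}: applied to the composition $(X,\mathcal{O}_{X_{an}}^{+})\xrightarrow{sp_{\mathcal{X}}}(\mathcal{X},\mathcal{O}_{\mathcal{X}})\xrightarrow{f}(\mathcal{Y},\mathcal{O}_{\mathcal{Y}})$, it yields a unique morphism $X\to Y$, which must be $f_{\textbf{C}}$ by functoriality of the generic fibre. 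Passing to inverse image functors gives a canonical isomorphism $(f_{\textbf{C}})^{-1}_{pro\acute{e}t}\mu_{\mathcal{Y}}^{-1}\cong \mu_{\mathcal{X}}^{-1}f^{-1}$, under which the structural map $\mu_{\mathcal{Y}}^{-1}\mathcal{O}_{\mathcal{Y}}\to \mathcal{O}_{Y}^{+}$ pulls back to $\mu_{\mathcal{X}}^{-1}f^{-1}\mathcal{O}_{\mathcal{Y}}\to \mathcal{O}_{X}^{+}$ and factors through $\mu_{\mathcal{X}}^{-1}\mathcal{O}_{\mathcal{X}}$ via the comorphism $f^{-1}\mathcal{O}_{\mathcal{Y}}\to \mathcal{O}_{\mathcal{X}}$.

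Given this, the rest is bookkeeping. Unwinding the definition of $\mathcal{E}^{+}$ and applying $(f_{\textbf{C}})^{*}_{pro\acute{e}t}$, I would use that $(f_{\textbf{C}})^{-1}_{pro\acute{e}t}$ is exact and commutes with tensor products (as a left adjoint), substitute the identification $(f_{\textbf{C}})^{-1}_{pro\acute{e}t}\mu_{\mathcal{Y}}^{-1} = \mu_{\mathcal{X}}^{-1}f^{-1}$, and then use associativity of the tensor product to change base in two stages: first from $\mu_{\mathcal{X}}^{-1}f^{-1}\mathcal{O}_{\mathcal{Y}}$ to $\mu_{\mathcal{X}}^{-1}\mathcal{O}_{\mathcal{X}}$ (which produces $\mu_{\mathcal{X}}^{-1}(f^{*}\mathcal{E})$), and then from $\mu_{\mathcal{X}}^{-1}\mathcal{O}_{\mathcal{X}}$ to $\mathcal{O}_{X}^{+}$. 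The resulting module is $(f^{*}\mathcal{E})^{+}$ by definition, and the chain of isomorphisms is natural in $\mathcal{E}$. I do not anticipate any real obstacle: the only substantive input is the functoriality of Huber's specialization morphism, which is built into its universal property, so everything else is formal.
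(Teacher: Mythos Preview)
Your proposal is correct. The paper actually omits the proof entirely, simply prefacing the lemma with ``One easily proves the following''; your argument spells out precisely the routine verification the author had in mind---functoriality of the specialization map via its universal property, followed by the standard tensor-product bookkeeping---so there is nothing to compare.
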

The main result from \cite{Sch1} that we need is the so called primitive comparison theorem:
\begin{thm}\cite[Theorem 5.1]{Sch1}\label{cisoo}
Let $X$ be a proper rigid analytic space over $Spa(\textbf{C}, \mathcal{O}_{\textbf{C}})$, where $\textbf{C}$ is an algebraically closed perfectoid field of characteristic $0$ with ring of integers $\mathcal{O}_{\textbf{C}}$.
Then the canonical maps
\\
\begin{center}
$H^{i}(X_{et}, \mathbb{Z}_{p}/p^{n})\otimes \mathcal{O}_{\textbf{C}}/p^{n}\to H^{i}(X, \mathcal{O}_{X}^{+}/p^{n})$
\\[0.1in]
$H^{i}(X_{et}, \mathbb{Z}_{p})\otimes \mathcal{O}_{\textbf{C}}\to H^{i}(X, \hat{\mathcal{O}}_{X}^{+})$
\\
\end{center}
are almost isomorphisms for all $i\geq 0$.\\
If $X$ is in addition smooth or the analytification of a proper scheme over $Spec(\textbf{C})$, then the canonical maps
\\
\begin{center}
$H^{i}(X_{et}, \mathbb{L}/p^{n})\otimes \mathcal{O}_{\textbf{C}}/p^{n}\to H^{i}(X, \mathcal{O}_{X}^{+}/p^{n}\otimes \mathbb{L}/p^{n})$
\\[0.1in]
$H^{i}(X_{et}, \mathbb{L})\otimes \mathcal{O}_{\textbf{C}}\to H^{i}(X, \mathbb{L}\otimes \hat{\mathcal{O}}_{X}^{+})$
\\
\end{center}
are almost isomorphisms for any lisse $\mathbb{Z}_{p}$-sheaf $\mathbb{L}$.
\end{thm}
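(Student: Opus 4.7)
The plan is to proceed by several reductions: first I would reduce the constant-coefficient statement to $\F_p$-coefficients by d\'evissage, then exploit almost vanishing of $\mathcal{O}_X^+/p$ on a perfectoid basis of $X_{pro\acute{e}t}$ to compute $H^\ast(X,\mathcal{O}_X^+/p)$ via a perfectoid cover, pass to inverse limits in $n$ for the $\hat{\mathcal{O}}_X^+$-statement, and finally handle lisse coefficients via a Hochschild-Serre argument using local triviality on an \'etale cover.

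For constant coefficients $\mathbb{L}=\Z_p/p^n$, applying the five-lemma to the long exact cohomology sequences of $0\to \Z/p\to \Z/p^n \to \Z/p^{n-1}\to 0$ and the (almost) analogue $0\to \mathcal{O}_X^+/p \to \mathcal{O}_X^+/p^n \to \mathcal{O}_X^+/p^{n-1}\to 0$ reduces everything by induction on $n$ to the case $n=1$. For $\F_p$-coefficients, the essential local input is that on any affinoid perfectoid $U\in X_{pro\acute{e}t}$ the cohomology $H^i(U,\mathcal{O}_X^+/p)$ is almost zero for $i>0$, by the almost purity theorem. A perfectoid cover of the proper rigid space $X$ therefore reduces the computation of $H^i(X,\mathcal{O}_X^+/p)$ to \v{C}ech cohomology on the nerve, and one compares this to \v{C}ech cohomology with constant coefficients $\F_p$ (which computes $H^i(X_{\acute{e}t},\F_p)$ via the identifications $\nu_\ast \F_p=\F_p$ and $R^{i}\nu_\ast \F_p=0$).

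Next, a Cartan-Serre style finiteness argument is required to show that $H^i(X,\mathcal{O}_X^+/p)$ is almost finitely presented as an $\mathcal{O}_{\textbf{C}}/p$-module: one must choose a nested pair of perfectoid covers such that the restriction maps on cochains exhibit the usual almost-compactness behavior, from which one concludes that the change of coefficients $\F_p\otimes \mathcal{O}_{\textbf{C}}/p\to \mathcal{O}_X^+/p$ induces an almost isomorphism on cohomology. Once the mod $p^n$ comparison is established uniformly in $n$ (using almost finiteness to guarantee Mittag-Leffler for the inverse system $\{H^i(X,\mathcal{O}_X^+/p^n)\}$), taking $R\varprojlim_n$ on $X_{pro\acute{e}t}$ gives the assertions for $\Z_p$ and $\hat{\mathcal{O}}_X^+$.

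Finally, for a lisse $\Z_p$-sheaf $\mathbb{L}$, use that $\mathbb{L}/p^n$ is trivialized on some finite \'etale Galois cover $Y\to X$ with group $G$; a Hochschild-Serre type spectral sequence reduces the statement on $X$ with coefficients in $\mathbb{L}$ to the already-established constant-coefficient statement on $Y$. The smoothness or algebraicity assumption enters precisely to guarantee that such a trivializing cover can be found with $Y$ again proper, so that the previous step applies. The main obstacle I expect is the almost finite presentation of $H^i(X,\mathcal{O}_X^+/p)$: this is the technical heart of the proof and requires careful control of refinements of perfectoid covers together with substantial input from the tilting equivalence, and the uniformity in $n$ needed for the inverse limit step is the secondary hurdle.
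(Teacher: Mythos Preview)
The paper does not actually prove this theorem: it is attributed to Scholze, and the paper's ``proof'' consists entirely of citations to \cite[Theorem 5.1]{Sch1}, \cite[Theorem 3.13]{Sch2}, and \cite[Theorem 3.17]{Sch2} for the mod $p$ statements, followed by the remark that the full statements follow by induction combined with \cite[Lemma 3.18]{Sch1}. Your proposal, by contrast, attempts to outline Scholze's underlying argument, which is far more than the paper undertakes.

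That said, your sketch has two genuine issues. First, and most importantly, you are missing the key mechanism in the mod $p$ step. You correctly identify almost finite presentation of $H^i(X,\mathcal{O}_X^+/p)$ as the technical core, but you do not explain how one passes from this to the comparison with $H^i(X_{\acute{e}t},\F_p)\otimes\mathcal{O}_{\textbf{C}}/p$. A direct \v{C}ech comparison as you describe cannot work: on an affinoid perfectoid piece the term for $\mathcal{O}_X^+/p$ is enormous while the term for $\F_p$ is tiny, and the map between them is nowhere near an isomorphism termwise. Scholze's actual argument uses the Artin--Schreier sequence $0\to\F_p\to\mathcal{O}_X^+/p\xrightarrow{\Phi-1}\mathcal{O}_X^+/p\to 0$ on $X_{pro\acute{e}t}$; almost finite presentation then combines with Frobenius-semilinear algebra over $\mathcal{O}_{\textbf{C}}/p$ to force the comparison. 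Without Artin--Schreier your outline has no bridge from finiteness to the isomorphism.

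Second, your explanation of where smoothness or algebraicity enters is wrong. A finite \'etale cover $Y\to X$ of a proper rigid space is always proper, so your Hochschild--Serre reduction would in fact give the lisse statement for arbitrary proper $X$ once the constant-coefficient case is known there. The hypotheses in the theorem reflect what is explicitly recorded in the cited references (the lisse case is stated in \cite{Sch1} for smooth $X$ and in \cite{Sch2} for algebraic $X$), not an obstruction in the descent argument you propose.
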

\begin{proof}
The mod $p$ statements can be found in \cite[Theorem 5.1]{Sch1} (smooth case), \cite[Theorem 3.13]{Sch2} (algebraic case) and \cite[Theorem 3.17]{Sch2} (for general proper rigid analytic varieties). The full statements all then follow by induction combined with \cite[Lemma 3.18]{Sch1}.
\end{proof}
The categories of locally free sheaves on the pro-\'etale, \'etale and analytic sites all agree:
\begin{lem}\cite[Lemma 7.3]{Sch1}\label{veceq}
Pullback along the natural projections 
\begin{center}
$X_{pro\acute{e}t}\xrightarrow{\nu}X_{\acute{e}t}\to X_{an}$
\end{center}
induces equivalences of categories between the categories of finite locally free modules over $\mathcal{O}_{X}$, resp. $\mathcal{O}_{X_{\acute{e}t}}$, resp. $\mathcal{O}_{X_{an}}$.
\end{lem}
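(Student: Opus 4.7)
The plan is to decompose the asserted chain of pullbacks into the étale-to-analytic part $\pi: X_{\acute{e}t}\to X_{an}$ and the pro-étale-to-étale part $\nu: X_{pro\acute{e}t}\to X_{\acute{e}t}$, and handle each separately. For $\pi$, the equivalence between finite locally free modules on $X_{an}$ and on $X_{\acute{e}t}$ is the standard effective descent statement for vector bundles along étale covers of rigid analytic spaces: étale maps of affinoids are faithfully flat, and coherent modules equipped with descent data glue via a Čech argument in the style of Kiehl's theorem. I would simply cite this and move on.

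The real work concerns $\nu$. Since $\mathcal{O}_X = \nu^{-1}\mathcal{O}_{X_{\acute{e}t}}$ by definition, pullback along $\nu$ is simply $\mathcal{F}\mapsto \nu^{-1}\mathcal{F}$, with no tensoring needed. For full faithfulness, given finite locally free $E, F$ on $X_{\acute{e}t}$, I would identify $\mathrm{Hom}_{\mathcal{O}_X}(\nu^*E, \nu^*F)$ with $\Gamma(X, \nu_*\nu^{-1}\mathcal{H}om_{\mathcal{O}_{X_{\acute{e}t}}}(E, F))$ and use that the unit map $\mathcal{G}\to \nu_*\nu^{-1}\mathcal{G}$ is an isomorphism for any étale sheaf $\mathcal{G}$. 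The latter holds because the underlying site morphism is induced by the fully faithful inclusion $X_{\acute{e}t}\hookrightarrow X_{pro\acute{e}t}$, and étale covers are cofinal among pro-étale covers of an object that is itself étale over $X$.

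For essential surjectivity, let $\mathcal{F}$ be a finite locally free $\mathcal{O}_X$-module of rank $n$. Choose a pro-étale cover $\{U_\alpha\to X\}$ trivializing $\mathcal{F}$ and refine so that each $U_\alpha$ is of the form $\varprojlim_{i} U_{\alpha,i}$ with $U_{\alpha,i}\to X$ étale. The trivializations produce transition cocycles in $\mathrm{GL}_n(\mathcal{O}_X(U_\alpha\times_X U_\beta))$. Using the key identity $\mathcal{O}_X(\varprojlim_i V_i) = \varinjlim_i \mathcal{O}_{X_{\acute{e}t}}(V_i)$ on qcqs pro-étale objects, each such transition matrix descends to some finite level in the pro-system. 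The main obstacle is then descending all transition matrices together with their cocycle relations to a common étale stage $i_0$: this is the step that requires care. I would resolve it by first extracting a finite subcover using quasi-compactness of the relevant overlaps, so that only finitely many descent conditions must be matched simultaneously, and then invoking the filtered colimit description of $\mathcal{O}_X$-sections to find a common index. The resulting étale descent datum produces a finite locally free $\mathcal{O}_{X_{\acute{e}t}}$-module whose $\nu$-pullback recovers $\mathcal{F}$, completing the argument.
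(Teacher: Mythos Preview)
Your proposal is correct and essentially reconstructs the argument of \cite[Lemma 7.3]{Sch1}, which is precisely what the paper cites rather than proves: the paper's own treatment consists only of the remark that Scholze's proof goes through without the smoothness hypothesis (with a further pointer to \cite[Theorem 8.2.22]{KLiu1}). Your key steps---full faithfulness via $\nu_*\nu^{-1}\cong \mathrm{id}$ on \'etale sheaves, and essential surjectivity by descending a $\mathrm{GL}_n$-cocycle to a finite \'etale level using the identity $\mathcal{O}_X(\varprojlim_i V_i)=\varinjlim_i \mathcal{O}_{X_{\acute{e}t}}(V_i)$ on qcqs objects together with a quasi-compactness argument to handle finitely many cocycle conditions simultaneously---are exactly the ingredients of Scholze's proof, and none of them uses smoothness of $X$.
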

Note that in \cite{Sch1} $X$ is assumed to be smooth, but the proof given there works also for the general case (see also \cite[Theorem 8.2.22]{KLiu1}).\\
By a vector bundle on $X$ we usually mean a locally free $\mathcal{O}_{X_{an}}$-module. We will however often use the above lemma to freely switch between the topologies. We hope that this will not be a source of confusion.
\begin{rmk}
If $X$ is quasi-compact and quasi-separated, pullback along $\nu$ also induces an equivalence of categories between finite locally free $\mathcal{O}^{+}_{X_{\acute{e}t}}$-modules and finite locally free modules over $\mathcal{O}_{X}^{+}$: As $\nu_{*}\mathcal{O}_{X}^{+}=\mathcal{O}_{X_{\acute{e}t}}^{+}$ (by \cite[Corollary 3.17 (i)]{Sch1}) one sees that $\nu^{*}$ is fully faithful. Now assume $\mathcal{E}^{+}$ is a finite locally free $\mathcal{O}^{+}_{X}$-module given by a gluing datum on $\tilde{Y}\times_{X} \tilde{Y}$ for some pro-\'etale cover $\tilde{Y}\to X$, which we can assume to be qcqs. By \cite[Lemma 3.16]{Sch1} one has $\mathcal{O}^{+}_{X}(V)=\varinjlim \mathcal{O}^{+}_{X}(V_{j})$ for any qcqs $V\in X_{pro\acute{e}t}$. So the gluing datum descends to some $Y_{i}\times_{X} Y_{i}$, which shows that $\mathcal{E}^{+}$ lies in the essential image of $\nu^{*}$.
\end{rmk}
\subsection{Local systems with coefficients in $\mathcal{O}_{\mathbb{C}_{p}}$}
Let $X$ be a connected locally noetherian adic space which is quasi-compact and quasi-separated. We fix a geometric point $\bar{x}$ of $X$ and denote by $\pi_{1}(X):=\pi^{\acute{e}t}_{1}(X, \bar{x})$ the profinite fundamental group with respect to the base point $\bar{x}$. Then by \cite[Proposition 3.5]{Sch1} the category of profinite \'etale covers of $X$ is equivalent to the category of profinite sets with a continuous $\pi_{1}(X)$-action. For any topological ring $R$ we denote by $Rep_{\pi_{1}(X)}(R)$ the category of continuous representations of $\pi_{1}(X)$ on finite rank free $R$-modules.
\begin{rmk}
A complete treatment of the fact that the category of finite \'etale covers of a connected locally noetherian adic space forms a Galois category can be found in \cite[\S 4]{dllz} (where the authors even treat the case of adic spaces equipped with a non-trivial log structure).
\end{rmk}
\begin{defn}[cf. \cite{Sch1} 8.1]\label{defn2}
We define $\hat{\mathcal{O}}_{\mathbb{C}_p}:=\varprojlim_{n}\mathcal{O}_{\mathbb{C}_p}/p^{n}$, where $\mathcal{O}_{\mathbb{C}_p}/p^{n}$ denotes the constant sheaf on $X_{pro\acute{e}t}$. An $\mathcal{O}_{\mathbb{C}_p}$-local system is a finite locally free $\hat{\mathcal{O}}_{\mathbb{C}_p}$-module.\\
We denote the category of $\hat{\mathcal{O}}_{\C_p}$-local systems by $LS_{\mathcal{O}_{\C_p}}(X)$.\\
Similarly an $\mathcal{O}_{\C_p}/p^{n}$-local system is a finite locally free module over the constant sheaf $\mathcal{O}_{\C_p}/p^{n}$ on $X_{pro\acute{e}t}$. We write $LS_{\mathcal{O}_{\C_p}/p^{n}}(X)$ for the category of $\mathcal{O}_{\C_p}/p^{n}$-local systems.
\end{defn}
\begin{rmk}\label{remake}
\begin{itemize}
\item As $X$ is qcqs one has $LS_{\mathcal{O}_{\C_p}/p^{n}}(X)=colim_{K/\Q_p}LS_{\mathcal{O}_{K}/p^{n}}(X)$, where $K$ runs over finite extensions of $\Q_p$. This is beacause $\mathcal{O}_{\C_p}/p^{n}=\varinjlim_{K/\Q_p} \mathcal{O}_{K}/p^{n}$.
\item $LS_{\mathcal{O}_{\C_p}/p^{n}}(X)$ is equivalent to the category of finite locally free $\mathcal{O}_{\C_p}/p^{n}$-modules on the \'etale site, i.e. every $\mathbb{L}\in LS_{\mathcal{O}_{\C_p}/p^{n}}(X)$ is in the essential image of $\nu^{*}$, where $\nu:X_{pro\acute{e}t}\to X_{\acute{e}t}$ is again the natural projection. For this note that if $A$ is any abelian group and $A_{pro\acute{e}t}$, $A_{\acute{e}t}$ the constant sheaves with values in $A$ on $X_{pro\acute{e}t}$, resp. on $X_{\acute{e}t}$, we get $\nu_{*}A_{pro\acute{e}t}=\nu_{*}\nu^{*}A_{\acute{e}t}=A_{\acute{e}t}$ by \cite[Corollary 3.17]{Sch1}.
\item Since $\mathcal{O}_{K}\subset \mathcal{O}_{\C_p}$ is flat, for any finite extension $K/\Q_p$, for any $\mathcal{O}_{K}/p^{n}$-local system $\mathbb{L}$ we have $H^{i}(X_{pro\acute{e}t}, \mathbb{L}\otimes \mathcal{O}_{\C_p}/p^{n})=H^{i}(X_{pro\acute{e}t}, \mathbb{L}) \otimes \mathcal{O}_{\C_p}/p^{n}$, for all $i\geq 0$.\\
\end{itemize}
\end{rmk}
Consider the category $LS_{\mathcal{O}_{\C_p \bullet}}(X)$ of inverse systems $\{ \mathbb{L}_{n} \}$ of finite free $\mathcal{O}_{\C_p}/p^{n}$-modules on $X_{pro\acute{e}t}$, where $\{ \mathbb{L}_{n} \}$ is isomorphic to an inverse system $\{ \mathbb{L}'_{n} \}$, satisfying $\mathbb{L}'_{n+1}/p^{n}\cong \mathbb{L}'_{n}$. Then there is a functor $LS_{\mathcal{O}_{\C_p \bullet}}(X)\to LS_{\mathcal{O}_{\C_p}}(X)$, taking $\{ \mathbb{L}_{n} \}$ to $\varprojlim \mathbb{L}_{n}$. The proof of \cite[Theorem 4.9]{Sch1} shows that the inverse system $\{ \mathbb{L}_{n} \}$ satisfies the conditions from \cite[Lemma 3.18]{Sch1}, which gives the following:
\begin{prop}[cf. \cite{Sch1} Proposition 8.2]\label{prop82}
The functor $LS_{\mathcal{O}_{\C_p \bullet}}(X)\to LS_{\mathcal{O}_{\C_p}}(X)$ is an equivalence of categories.
\end{prop}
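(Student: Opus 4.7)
The plan is to construct a quasi-inverse to the limit functor and to establish full faithfulness by reducing both to the vanishing of higher inverse limits for suitable inverse systems. The key technical input is \cite[Lemma 3.18]{Sch1}, which guarantees that an inverse system $\{\mathcal{F}_n\}$ of abelian sheaves on $X_{pro\acute{e}t}$ with $\mathcal{F}_{n+1}/p^n \cong \mathcal{F}_n$ has vanishing higher derived inverse limits on qcqs objects, so that $R\varprojlim \mathcal{F}_n = \varprojlim \mathcal{F}_n$. The strategy is exactly the one indicated by the text, mimicking the proofs of \cite[Theorem 4.9]{Sch1} and \cite[Proposition 8.2]{Sch1}.

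For essential surjectivity, I would start from an $\hat{\mathcal{O}}_{\C_p}$-local system $\mathcal{L}$ and set $\mathbb{L}_n := \mathcal{L}\otimes_{\hat{\mathcal{O}}_{\C_p}} \mathcal{O}_{\C_p}/p^n$ (equivalently, $\mathcal{L}/p^n\mathcal{L}$). On a pro-\'etale cover trivializing $\mathcal{L}$ one has $\mathcal{L} \cong \hat{\mathcal{O}}_{\C_p}^r$ and hence $\mathbb{L}_n \cong (\mathcal{O}_{\C_p}/p^n)^r$, so each $\mathbb{L}_n$ is a finite free $\mathcal{O}_{\C_p}/p^n$-local system and the inverse system tautologically satisfies $\mathbb{L}_{n+1}/p^n \cong \mathbb{L}_n$. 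The identification $\mathcal{L} \cong \varprojlim \mathbb{L}_n$ holds trivially on this cover, and Lemma 3.18 lets the inverse limit commute with restriction to qcqs pro-\'etale objects, so that the local identification glues to a global isomorphism.

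For full faithfulness, given two such inverse systems $\{\mathbb{L}_n\}$ and $\{\mathbb{M}_n\}$ with limits $\mathcal{L}$ and $\mathcal{M}$, the internal sheaves $\mathcal{H}om_{\mathcal{O}_{\C_p}/p^n}(\mathbb{L}_n,\mathbb{M}_n)$ again form an inverse system satisfying the hypothesis of Lemma 3.18 (verifiable pro-\'etale locally, where both systems are trivial and the Hom sheaves become free of rank $rs$ over $\mathcal{O}_{\C_p}/p^n$). Taking the inverse limit identifies the resulting sheaf with $\mathcal{H}om_{\hat{\mathcal{O}}_{\C_p}}(\mathcal{L},\mathcal{M})$, and applying the lemma to global sections gives
\[
\mathrm{Hom}(\{\mathbb{L}_n\},\{\mathbb{M}_n\}) = \varprojlim \mathrm{Hom}(\mathbb{L}_n,\mathbb{M}_n) = \mathrm{Hom}(\mathcal{L},\mathcal{M}),
\]
which is the required bijection.

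The main obstacle is the verification that the inverse systems in play actually satisfy the hypotheses of \cite[Lemma 3.18]{Sch1} with the required uniformity. Since $LS_{\mathcal{O}_{\C_p\bullet}}(X)$ is defined only up to isomorphism with a strictly compatible inverse system, one must first replace a given representative by one that literally satisfies $\mathbb{L}'_{n+1}/p^n \cong \mathbb{L}'_n$, and for the Hom direction one must further check that local trivializations suffice to propagate Mittag-Leffler behavior to arbitrary qcqs pro-\'etale objects. Both verifications are carried out in essentially the form used in the proof of \cite[Theorem 4.9]{Sch1}, so the argument here amounts to tracing that reasoning in the slightly more general setting of coefficients $\mathcal{O}_{\C_p}/p^n$ rather than $\Z_p/p^n$, which is harmless because $\mathcal{O}_{\C_p}$ is flat over $\Z_p$ (cf.\ Remark \ref{remake}).
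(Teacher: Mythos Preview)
Your proposal is correct and follows essentially the same approach as the paper: the paper's justification is just the one-line observation that the proof of \cite[Theorem~4.9]{Sch1} shows the inverse system $\{\mathbb{L}_n\}$ satisfies the conditions of \cite[Lemma~3.18]{Sch1}, and you have simply unpacked that reference into the expected two halves (essential surjectivity via $\mathbb{L}_n := \mathcal{L}/p^n$, full faithfulness via the Hom inverse system). Your remark that passing from $\Z_p/p^n$ to $\mathcal{O}_{\C_p}/p^n$ coefficients is harmless by flatness is exactly the point needed to transport Scholze's argument.
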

\begin{prop}\label{repeq}
There is an equivalence of categories
\begin{center}
$
LS_{\mathcal{O}_{\C_p}}(X)
\leftrightarrow
Rep_{\pi^{\acute{e}t}_{1}(X, \bar{x})}(\mathcal{O}_{\C_p})$
\end{center}
\end{prop}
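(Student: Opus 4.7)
The strategy is to combine Proposition \ref{prop82} with the classical Galois-theoretic description of finite \'etale covers. By Proposition \ref{prop82}, it suffices to establish an equivalence between inverse systems $\{\mathbb{L}_n\}$ in $LS_{\mathcal{O}_{\C_p \bullet}}(X)$ and continuous $\pi_1(X)$-representations on finite free $\mathcal{O}_{\C_p}$-modules, compatibly with reduction mod $p^n$. The main task is therefore to produce, for each $n$, an equivalence $LS_{\mathcal{O}_{\C_p}/p^n}(X) \simeq Rep_{\pi_1(X)}(\mathcal{O}_{\C_p}/p^n)$, and then pass to the limit.

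For the finite-level equivalence, by the second bullet of Remark \ref{remake} an $\mathcal{O}_{\C_p}/p^n$-local system is the same as a finite locally free module over the constant sheaf $\mathcal{O}_{\C_p}/p^n$ on $X_{\acute{e}t}$. The first bullet of Remark \ref{remake} tells us that any such module is obtained by base change from an $\mathcal{O}_{K}/p^n$-local system for some sufficiently large finite extension $K/\Q_p$ (using that $X$ is qcqs and the rank is finite, so only finitely many matrix entries of a trivializing cocycle need to be accommodated in $\mathcal{O}_K/p^n$). Since $\mathcal{O}_K/p^n$ is a \emph{finite} discrete ring, a rank-$r$ locally free module on $X_{\acute{e}t}$ is a torsor under the finite group $GL_r(\mathcal{O}_K/p^n)$, which by the finite \'etale Galois correspondence for connected locally noetherian adic spaces (\cite[Proposition 3.5]{Sch1}, cf.\ \cite[\S 4]{dllz}) corresponds to a continuous homomorphism $\pi_1(X) \to GL_r(\mathcal{O}_K/p^n)$, i.e.\ an object of $Rep_{\pi_1(X)}(\mathcal{O}_K/p^n)$. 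Taking the colimit over $K/\Q_p$ yields the equivalence at each finite level, functorial in morphisms.

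To pass to the limit, a compatible system $\{\mathbb{L}_n\}$ with $\mathbb{L}_{n+1}/p^n \cong \mathbb{L}_n$ corresponds to a compatible family of continuous representations $\rho_n: \pi_1(X) \to GL_r(\mathcal{O}_{\C_p}/p^n)$, and the inverse limit $\rho := \varprojlim_n \rho_n$ defines a continuous representation on a finite free $\mathcal{O}_{\C_p}$-module of rank $r$, the common rank being well-defined by the compatibility. Conversely, any continuous $\rho: \pi_1(X) \to GL_r(\mathcal{O}_{\C_p})$ reduces to continuous $\rho_n$ valued in the discrete groups $GL_r(\mathcal{O}_{\C_p}/p^n)$ (continuity is automatic since the continuous image of the compact group $\pi_1(X)$ in a discrete group is finite), which assemble into an object of $LS_{\mathcal{O}_{\C_p\bullet}}(X)$ via the finite-level equivalence; Proposition \ref{prop82} then translates this back to $LS_{\mathcal{O}_{\C_p}}(X)$.

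The main technical point needing care is the descent step producing a finite \'etale trivializing cover for a given $\mathcal{O}_{\C_p}/p^n$-local system: one must descend from $\mathcal{O}_{\C_p}/p^n$ to $\mathcal{O}_K/p^n$ for some finite $K/\Q_p$ so that the structure group becomes finite, at which point the equivalence of finite \'etale torsors with continuous homomorphisms out of $\pi_1(X)$ is directly applicable. Everything else is formal given this descent together with Proposition \ref{prop82}.
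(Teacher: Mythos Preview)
Your proposal is correct and follows essentially the same route as the paper: reduce via Proposition \ref{prop82} to the finite levels, use Remark \ref{remake} to descend each $\mathcal{O}_{\C_p}/p^n$-local system to some $\mathcal{O}_K/p^n$ with $K/\Q_p$ finite, invoke the Galois correspondence for finite \'etale covers to identify $LS_{\mathcal{O}_K/p^n}(X)$ with $Rep_{\pi_1(X)}(\mathcal{O}_K/p^n)$, and use compactness of $\pi_1(X)$ to see that the colimit over $K$ on the representation side recovers all of $Rep_{\pi_1(X)}(\mathcal{O}_{\C_p}/p^n)$. The only cosmetic difference is that the paper phrases the finite-level step directly via the fiber functor $\mathbb{L}\mapsto \mathbb{L}_{\bar x}$ rather than via torsors, but this is the same content.
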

\begin{proof}
The following arguments are well known. First fix $n\geq 1$. As usual, finite $\pi_{1}(X)$-sets correspond to finite \'etale covers, and $\mathbb{L}\mapsto \mathbb{L}_{\bar{x}}$ gives an equivalence of categories 
\begin{center}
$LS_{\mathcal{O}_{K}/p^{n}}(X)\leftrightarrow Rep_{\pi_{1}^{\acute{e}t}(X, \bar{x})}(\mathcal{O}_{K}/p^{n})$
\end{center}
for all finite extensions $K/\Q_p$. Clearly, these equivalences are compatible with base extensions $\mathcal{O}_{K}/p^{n}\to \mathcal{O}_{K'}/p^{n}$, for $K\subset K'$. So we get 
\begin{center}
$colim_{K\subset \overline{\Q}_{p}}LS_{\mathcal{O}_{K}/p^{n}}(X)\leftrightarrow colim_{K\subset \overline{\Q}_{p}}Rep_{\pi_{1}^{\acute{e}t}(X, \bar{x})}(\mathcal{O}_{K}/p^{n})$.
\end{center}
Now by Remark \ref{remake}, we know that $colim_{K\subset \overline{\Q}_{p}}LS_{\mathcal{O}_{K}/p^{n}}$ is equivalent to $LS_{\mathcal{O}_{\C_p}/p^{n}}(X)$. On the other hand, that $colim_{K\subset \overline{\Q}_{p}}Rep_{\pi_{1}^{\acute{e}t}(X, \bar{x})}(\mathcal{O}_{K}/p^{n})$ is equivalent to $Rep_{\pi_{1}^{\acute{e}t}(X, \bar{x})}(\mathcal{O}_{\C_p}/p^{n})$ follows from the fact that $\pi_{1}(X)$ is compact: since $GL_{n}(\mathcal{O}_{\C_p}/p^{n})$ carries the discrete topology, the image $\rho(\pi_{1}(X))$ will be finite for any continuous representation $\rho:\pi_{1}(X)\to GL_{n}(\mathcal{O}_{\C_p}/p^{n})$.\\
Now passing to the $p$-adic completion, using Proposition \ref{prop82}, gives the claim.
\end{proof}
One can then also generalize the primitive comparison theorem to the case of $\mathcal{O}_{\C_p}$-coefficients.
\begin{thm}\label{ciso}
Let $X$ be a proper smooth rigid analytic space over $Spa(\C_p, \mathcal{O}_{\C_p})$. And let $\mathbb{L}$ be an $\hat{\mathcal{O}}_{\mathbb{C}_p}$-local system on $X$. The canonical map 
\begin{center}
$
H^{i}(X_{pro\acute{e}t}, \mathbb{L})\to H^{i}(X_{pro\acute{e}t}, 
\mathbb{L}\otimes_{\hat{\mathcal{O}}_{\mathbb{C}_p}}\hat{\mathcal{O}}_{X}^{+})
$
\end{center}
is an almost isomorphism, for all $i\geq 0$.
\end{thm}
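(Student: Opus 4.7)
The plan is to reduce, via Proposition \ref{prop82}, to a mod $p^n$ version of the statement, trivialize $\mathbb{L}_n := \mathbb{L}/p^n$ on a finite \'etale Galois cover of $X$, and descend to $X$ using Cartan--Leray together with the trivial-coefficient mod $p^n$ case of Theorem \ref{cisoo}.

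By Proposition \ref{prop82} one has $\mathbb{L} = \varprojlim_n \mathbb{L}_n$ with $\mathbb{L}_{n+1}/p^n \cong \mathbb{L}_n$, and since $\mathbb{L}$ is locally free, also $\mathbb{L} \otimes_{\hat{\mathcal{O}}_{\C_p}} \hat{\mathcal{O}}_X^+ = \varprojlim_n (\mathbb{L}_n \otimes_{\mathcal{O}_{\C_p}/p^n} \mathcal{O}_X^+/p^n)$. Granting the mod $p^n$ almost isomorphism
\[
H^i(X_{pro\acute{e}t}, \mathbb{L}_n) \to H^i(X_{pro\acute{e}t}, \mathbb{L}_n \otimes \mathcal{O}_X^+/p^n)
\]
for every $n$, the full claim will follow via \cite[Lemma 3.18]{Sch1}, exactly as in the proofs of Theorem \ref{cisoo} and Proposition \ref{prop82}, after verifying the Mittag--Leffler condition on both inverse systems.

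For the mod $p^n$ statement, let $\rho_n: \pi_1(X) \to GL_r(\mathcal{O}_{\C_p}/p^n)$ be the representation associated to $\mathbb{L}_n$ via Proposition \ref{repeq}. As noted in the proof of that proposition, $\rho_n$ has finite image $G$, so there is a finite \'etale Galois cover $\pi: Y \to X$ with group $G$ on which $\pi^{-1}\mathbb{L}_n$ becomes constant, isomorphic to $\underline{V}$ for $V := (\mathcal{O}_{\C_p}/p^n)^r$. I would then apply the Cartan--Leray spectral sequence
\[
E_2^{p,q} = H^p(G, H^q(Y_{pro\acute{e}t}, \pi^{-1}\mathcal{F})) \Rightarrow H^{p+q}(X_{pro\acute{e}t}, \mathcal{F})
\]
to both $\mathcal{F} = \mathbb{L}_n$ and $\mathcal{F} = \mathbb{L}_n \otimes \mathcal{O}_X^+/p^n$. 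Since $\pi^{-1}\mathbb{L}_n$ is, as a sheaf of $\mathcal{O}_{\C_p}/p^n$-modules, a direct sum of $r$ copies of the constant sheaf, the desired almost isomorphism on $E_2$-pages reduces (after accounting for the $G$-twist) to the trivial-coefficient mod $p^n$ case of Theorem \ref{cisoo} applied to the proper variety $Y$, which is already available.

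The main technical obstacle is the bookkeeping needed to ensure that the comparison map on $Y$ is $G$-equivariant, so that one obtains a genuine morphism of Cartan--Leray spectral sequences, and then checking that the almost isomorphism survives both the spectral sequence abutment and the derived limit passage. A possible alternative strategy, closer in spirit to \cite[Theorem 5.1]{Sch1}, would be to use Remark \ref{remake} to realize $\mathbb{L}_n$ as the base change of an $\mathcal{O}_K/p^n$-local system for some finite $K/\Q_p$, regard the latter as a $\Z/p^n$-local system of higher rank, and directly invoke the lisse $\Z_p$-sheaf version of Theorem \ref{cisoo}; this route would make the smoothness hypothesis on $X$ enter essentially, matching the assumption in the statement.
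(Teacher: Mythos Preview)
Your main Cartan--Leray approach is correct and genuinely different from the paper's proof. The paper does not trivialize on a Galois cover; instead it uses Remark \ref{remake} to descend $\mathbb{L}/p$ to an $\mathcal{O}_K/p$-local system $\mathbb{L}'$ for some finite $K/\Q_p$, reduces further modulo a uniformizer $\pi$ to obtain an $\F_q$-local system, and then re-runs Scholze's Artin--Schreier argument from \cite[Theorem 5.1]{Sch1} with Frobenius replaced by its $m$-th power (where $q=p^m$) to get the almost isomorphism mod $\pi$. A d\'evissage along the $\pi$-adic filtration $0\to\pi^{k-1}\mathbb{L}'/\pi^k\to\mathbb{L}'/\pi^k\to\mathbb{L}'/\pi^{k-1}\to 0$ then yields the mod $p$ statement, and the passage to the limit is handled exactly as you do, via \cite[Lemma 3.18]{Sch1}.

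Your approach is more structural: it cleanly separates the local-system datum (handled by group cohomology of the finite Galois group) from the coefficient comparison (handled by the constant-coefficient case on $Y$). It has the pleasant feature that it only invokes the \emph{trivial-coefficient} case of Theorem \ref{cisoo}, which holds for any proper $X$, so in fact your argument does not use smoothness of $X$ at all---a point worth noting. The paper's approach, by contrast, stays closer to Scholze's original perfectoid/Artin--Schreier mechanism and gets the $\F_q$-coefficient case directly without a spectral sequence. Your alternative strategy---viewing an $\mathcal{O}_K/p^n$-local system as a $\Z/p^n$-local system of rank $r[K:\Q_p]$ (using that $\mathcal{O}_K$ is free over $\Z_p$) and citing the lisse-sheaf case of Theorem \ref{cisoo}---is the shortest route of all; just note that one needs the mod $p^n$ statement for a $\Z/p^n$-local system not a priori lifting to $\Z_p$, which however is exactly what the proof of \cite[Theorem 5.1]{Sch1} plus the $p$-d\'evissage gives.
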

\begin{proof}
By the above remark we see that $\mathbb{L}/p\cong \mathbb{L}'\otimes \mathcal{O}_{\C_p}/p$ where $\mathbb{L}'$ is defined over $\mathcal{O}_{K}/p$ for $K/\Q_p$ a finite extension. Let $\pi$ be a uniformizer of $K$. Then $\mathbb{L}'/\pi$ is an $\F_q$-local system, for some $q=p^{m}$. Now, if we replace the Frobenius occuring in the proof of Theorem 5.1 in \cite{Sch1} everywhere by its $m$-th power $x\mapsto x^{q}$, the proof goes through for $\mathbb{F}_{q}$-local systems and we get an almost isomorphism $H^{i}(X_{\acute{e}t}, \mathbb{L}'/\pi)\otimes \mathcal{O}_{\C_p}/p\cong^{a}H^{i}(X_{\acute{e}t}, \mathbb{L}'/\pi\otimes \mathcal{O}^{+}/p)$. But then by induction along the exact sequences 
\begin{center}
$0\rightarrow \pi^{n-1}\mathbb{L}'/\pi^{n}\rightarrow \mathbb{L}'/\pi^{n}\rightarrow \mathbb{L}'/\pi^{n-1}\rightarrow 0$
\end{center}
we find that 
\begin{center}
$H^{i}(\mathbb{L}/p)=H^{i}(\mathbb{L}')\otimes_{\mathcal{O}_{K}/p}\mathcal{O}_{\C_p}/p\to H^{i}(\mathbb{L}'\otimes_{\mathcal{O}_{K}/p}\hat{\mathcal{O}}^{+}/p)=H^{i}(\mathbb{L}\otimes_{\hat{\mathcal{O}}_{\C_p}}\hat{\mathcal{O}}^{+}/p)$
\end{center}
is an almost isomorphism. But then the full statement follows again by induction and using \cite[Lemma 3.18]{Sch1} as in the case of $\hat{\Z}_p$-local systems (see the proof of \cite[Theorem 8.4]{Sch1}).
\end{proof}
\begin{rmk}
There is a functor 
\begin{center}
$
LS_{\mathcal{O}_{\C_p}}(X) \to LF(\hat{\mathcal{O}}^{+}_{X}) 
$
\end{center}
which takes $\mathbb{L}$ to $\mathbb{L}\otimes \hat{\mathcal{O}}^{+}_{X}$. Assume now that $X$ is connected and proper smooth over $Spa(\C_p, \mathcal{O}_{\C_p})$. Then one immediately gets from Theorem \ref{ciso} that the induced functor
\begin{center}
$Rep_{\pi_{1}(X)}(\mathcal{O}_{\C_p})\otimes \Q \cong LS_{\mathcal{O}_{\C_p}}(X) \otimes \Q\to LF(\hat{\mathcal{O}}_{X})$, 
\end{center}
taking $\rho$ to $\mathbb{L}_{\rho}\otimes \hat{\mathcal{O}}_{X}$, is fully faithful. Here for a representation $\rho$, we denote by $\mathbb{L}_{\rho}$ the associated local system.\\
At the integral level Theorem \ref{ciso} shows that one has a fully faithful embedding of $\mathcal{O}_{\C_p}$-local systems into the category of finite locally free almost $\hat{\mathcal{O}}_{X}^{+}$-modules. Note however, that the discussions in the following section will show that full faithfulness holds also at the integral level (without passing to almost modules) and without any restrictions on $X$ (see Corollary \ref{full}).
\end{rmk}
\section{Representations attached to $\mathcal{O}_{X}^{+}$-modules}
\subsection{Trivializable $\hat{\mathcal{O}}_{X}^{+}$-modules and representations}
Fix a $Spa(\textbf{C}, \mathcal{O}_{\textbf{C}})$-valued point $x$ of $X$. We will show how to attach a continuous $\mathcal{O}_{\textbf{C}}$-representation of $\pi_{1}(X):=\pi^{\acute{e}t}_{1}(X, x)$ to certain $\mathcal{O}_{X}^{+}$-modules $\mathcal{E}^{+}$ for which the $p$-adic completion $\hat{\mathcal{E}}^{+}$ is trivialized on a profinite \'etale cover, i.e. an inverse limit of finite \'etale surjective maps.
\\
Let $X$ be a proper connected rigid analytic space over $Spa(\textbf{C}, \mathcal{O}_{\textbf{C}})$. Then the only global functions are the constant ones, i.e. $\Gamma(X, \mathcal{O}_{X})=\textbf{C}$. As $\Gamma(X, \mathcal{O}_{X}^{+})$ consists of the functions $f$ for which $\abs{f(x)}\leq 1$ for all $x\in X$, we see that $\Gamma(X, \mathcal{O}_{X}^{+})=\mathcal{O}_{\textbf{C}}$. Similarly one has $\Gamma(X, \hat{\mathcal{O}}_{X}^{+})\subset \Gamma(X, \hat{\mathcal{O}}_{X})=\textbf{C}$. And hence $\Gamma(X, \hat{\mathcal{O}}_{X}^{+})=\mathcal{O}_{\textbf{C}}$.\\
We first record the following:
\begin{lem}\label{lem31}
Let $\tilde{Y}=\varprojlim_{i}Y_{i}\to X$ be a profinite \'etale cover and let $\mathcal{E}^{+}$ be a locally free $\mathcal{O}_{X}^{+}$-module, such that $\hat{\mathcal{E}}^{+}\vert_{\tilde{Y}}$ is trivial. Then for any $n\geq 1$ there exists some $i$, such that $\mathcal{E}^{+}/p^{n}$ becomes trivial on $Y_{i}$.
\end{lem}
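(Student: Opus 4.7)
The strategy is to transfer the trivialization of $\hat{\mathcal{E}}^{+}|_{\tilde{Y}}$, reduced modulo $p^{n}$, to a trivialization of $\mathcal{E}^{+}/p^{n}$ at some finite level $Y_{i}$, by exploiting the continuity of sections of $\mathcal{O}_{X}^{+}$ along cofiltered qcqs pro-\'etale limits, as provided by \cite[Lemma 3.16]{Sch1}.

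First, since $\mathcal{E}^{+}$ is a finite locally free $\mathcal{O}_{X}^{+}$-module on the qcqs space $X$, the Remark following Lemma \ref{veceq} shows that $\mathcal{E}^{+}$ arises from an \'etale $\mathcal{O}^{+}_{X_{\acute{e}t}}$-module; by qcqs we may therefore pick a \emph{finite} \'etale covering $\{U_{\alpha}\}_{\alpha \in A}$ of $X$, trivializations $\psi_{\alpha}:(\mathcal{O}_{X}^{+})^{r}|_{U_{\alpha}} \xrightarrow{\sim} \mathcal{E}^{+}|_{U_{\alpha}}$, and transition cocycles $g_{\alpha\beta} \in \mathrm{GL}_{r}(\mathcal{O}_{X}^{+}(U_{\alpha\beta}))$ on $U_{\alpha\beta}:=U_{\alpha}\times_{X}U_{\beta}$.

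Second, the given isomorphism $\Phi:(\hat{\mathcal{O}}_{X}^{+})^{r}|_{\tilde{Y}} \xrightarrow{\sim} \hat{\mathcal{E}}^{+}|_{\tilde{Y}}$ reduces, since $\hat{\mathcal{E}}^{+}/p^{n} = \mathcal{E}^{+}/p^{n}$, to an isomorphism $\bar{\Phi}:(\mathcal{O}_{X}^{+}/p^{n})^{r}|_{\tilde{Y}} \xrightarrow{\sim} \mathcal{E}^{+}/p^{n}|_{\tilde{Y}}$. Restricting to $\tilde{Y}^{\alpha}:=\tilde{Y}\times_{X}U_{\alpha}$ and composing with $\psi_{\alpha}^{-1}$ yields matrices $M_{\alpha}\in\mathrm{GL}_{r}\bigl((\mathcal{O}_{X}^{+}/p^{n})(\tilde{Y}^{\alpha})\bigr)$ satisfying $M_{\alpha}=g_{\alpha\beta}M_{\beta}$ on $\tilde{Y}^{\alpha\beta}:=\tilde{Y}\times_{X}U_{\alpha\beta}$.

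Third, write $\tilde{Y}^{\alpha}=\varprojlim_{i}Y_{i}^{\alpha}$ with $Y_{i}^{\alpha}:=Y_{i}\times_{X}U_{\alpha}$; these are qcqs because each $Y_{i}$ and each $U_{\alpha}$ is. Applying \cite[Lemma 3.16]{Sch1} to $\tilde{Y}^{\alpha}$ gives $\mathcal{O}_{X}^{+}(\tilde{Y}^{\alpha})=\varinjlim_{i}\mathcal{O}_{X}^{+}(Y_{i}^{\alpha})$, and reducing modulo $p^{n}$ (using exactness of filtered colimits) identifies the relevant sections of $\mathcal{O}_{X}^{+}/p^{n}$. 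Hence each entry of $M_{\alpha}$ and of $M_{\alpha}^{-1}$ is represented by an entry already defined over some $Y_{i_{\alpha}}^{\alpha}$; as $A$ is finite and the index set is filtered, we may pick a common index $i$ so that all $M_{\alpha}$ descend to elements $M_{\alpha}^{(i)} \in \mathrm{GL}_{r}((\mathcal{O}_{X}^{+}/p^{n})(Y_{i}^{\alpha}))$. The cocycle identities $M_{\alpha}^{(i)}=g_{\alpha\beta}M_{\beta}^{(i)}$ hold on $\tilde{Y}^{\alpha\beta}=\varprojlim_{j}Y_{j}^{\alpha\beta}$, so a further application of the colimit formula on the qcqs overlaps $Y_{j}^{\alpha\beta}$ enlarges $i$ enough to realize these identities at finite level. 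The $M_{\alpha}^{(i)}$ then glue to the desired trivialization $(\mathcal{O}_{X}^{+}/p^{n})^{r}|_{Y_{i}}\xrightarrow{\sim}\mathcal{E}^{+}/p^{n}|_{Y_{i}}$.

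\textbf{Main obstacle.} The technical point is the transition from \cite[Lemma 3.16]{Sch1}, which is phrased for $\mathcal{O}_{X}^{+}$, to the quotient sheaf $\mathcal{O}_{X}^{+}/p^{n}$: one must verify that on qcqs pro-\'etale objects the formation of sections commutes with the reduction modulo $p^{n}$, so that entries of $M_{\alpha}$ can actually be descended. Once this is granted, the remainder is an essentially formal bookkeeping argument identical in spirit to the descent of gluing data used in the Remark after Lemma \ref{veceq}.
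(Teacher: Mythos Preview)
Your approach is correct and essentially the same as the paper's: both reduce the trivialization modulo $p^{n}$ and then descend to a finite level via \cite[Lemma 3.16]{Sch1}. The paper packages the descent step more cleanly by observing that $\mathcal{E}^{+}/p^{n}=\nu^{*}(\mathcal{F}/p^{n})$ for an \'etale sheaf $\mathcal{F}$ and then applying \cite[Lemma 3.16]{Sch1} directly to the internal Hom sheaf, so that
\[
\mathrm{Hom}\bigl(\mathcal{E}^{+}/p^{n}|_{\tilde{Y}},(\mathcal{O}_{X}^{+}/p^{n})^{r}|_{\tilde{Y}}\bigr)=\varinjlim_{i}\mathrm{Hom}\bigl(\mathcal{E}^{+}/p^{n}|_{Y_{i}},(\mathcal{O}_{X}^{+}/p^{n})^{r}|_{Y_{i}}\bigr),
\]
and the isomorphism together with its inverse descend at once; your version with an explicit trivializing cover and cocycle matrices is just an unrolled form of the same argument.

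One correction, however: the ``main obstacle'' you flag is not a real obstacle, and it stems from a misreading of \cite[Lemma 3.16]{Sch1}. That lemma is \emph{not} specific to $\mathcal{O}_{X}^{+}$; it asserts that for \emph{any} abelian sheaf $F$ on $X_{\acute{e}t}$ and any qcqs $U=\varprojlim U_{j}$ one has $\nu^{-1}F(U)=\varinjlim_{j}F(U_{j})$. Since $\nu^{-1}$ is exact, $\mathcal{O}_{X}^{+}/p^{n}=\nu^{-1}(\mathcal{O}^{+}_{X_{\acute{e}t}}/p^{n})$, so the colimit formula holds for $\mathcal{O}_{X}^{+}/p^{n}$ directly, with no need to compare sections before and after reduction. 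Once you apply the lemma to the correct sheaf, your matrices $M_{\alpha}$ and the cocycle identities descend immediately, and the argument goes through without further justification.
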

\begin{proof}
Let $\nu:X_{pro\acute{e}t}\to X_{\acute{e}t}$ denote the canonical projection. There exists a locally free $\mathcal{O}^{+}_{X_{\acute{e}t}}$-module $\mathcal{F}$, such that $\nu^{*}\mathcal{F}=\mathcal{E}^{+}$. Hence we also have $\hat{\mathcal{E}}^{+}/p^{n}=\nu^{*}(\mathcal{F}/p^{n})$. But then by \cite[Lemma 3.16]{Sch1} $\hat{\mathcal{E}}^{+}/p^{n}$ is the sheaf given by $\hat{\mathcal{E}}^{+}/p^{n}(V)=\varinjlim_{j}\mathcal{F}/p^{n}(V_{j})$ for any qcqs object $V=\varprojlim_{j}V_{j}\in X_{pro\acute{e}t}$.\\
As all $Y_{i}$ are quasi-compact and quasi-separated we have that $\tilde{Y}$ is qcqs by \cite[Lemma3.12]{Sch1} (v). Now note that if $\mathcal{E}, \mathcal{E}'$ are locally free $\mathcal{O}^{+}_{X}/p^{n}$-modules, then by what we said above 
\begin{center}
$Hom(\mathcal{E}\vert_{\tilde{Y}}, \mathcal{E}'\vert_{\tilde{Y}})=\mathcal{H}om(\mathcal{E}, \mathcal{E}')(\tilde{Y})=\varinjlim \mathcal{H}om(\mathcal{E}, \mathcal{E}')(Y_{j})=\varinjlim Hom(\mathcal{E}\vert_{Y_{j}}, \mathcal{E}'\vert_{Y_{j}})$.
\end{center}
From this we see that the isomorphism $(\hat{\mathcal{E}}^{+}/p^{n})\vert_{\tilde{Y}}\cong (\hat{\mathcal{O}}_{\tilde{Y}}^{+}/p^{n})^{r}$ descends to an isomorphism $(\hat{\mathcal{E}}^{+}/p^{n})\vert_{Y_i}\cong \hat{\mathcal{O}}^{+}_{Y_i}/p^{n}$ for some large enough $i$.
\end{proof}
\begin{lem}
Let $X$ be proper connected and $\tilde{Y}=\varprojlim_{i}Y_{i}\to X$ be a profinite \'etale cover where each $Y_{i}$ is connected. Then $\Gamma(\tilde{Y}, \hat{\mathcal{O}}_{X}^{+})=\mathcal{O}_{\textbf{C}}$
\end{lem}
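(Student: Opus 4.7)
The plan is to reduce to a finite-level computation via primitive comparison, push the resulting almost isomorphism through the colimit and the inverse limit, and finally upgrade it to an honest equality by embedding $R := \Gamma(\tilde{Y}, \hat{\mathcal{O}}_{X}^{+})$ into $\textbf{C}$.

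First, I would unfold the definition $R = \varprojlim_{n} \Gamma(\tilde{Y}, \mathcal{O}_{X}^{+}/p^{n})$. Since all $Y_{i}$ are qcqs, so is $\tilde{Y}$ by \cite[Lemma 3.12(v)]{Sch1}; and since $\mathcal{O}_{X}^{+}/p^{n} = \nu^{-1}(\mathcal{O}^{+}_{X_{\acute{e}t}}/p^{n})$ is pulled back from the \'etale site, \cite[Lemma 3.16]{Sch1} gives $\Gamma(\tilde{Y}, \mathcal{O}_{X}^{+}/p^{n}) = \varinjlim_{i} \Gamma(Y_{i}, \mathcal{O}^{+}_{Y_{i}}/p^{n})$. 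Applying primitive comparison (\autoref{cisoo}) with constant coefficient $\mathbb{Z}/p^{n}$ to each proper connected $Y_{i}$ yields an almost isomorphism $\mathcal{O}_{\textbf{C}}/p^{n} \to \Gamma(Y_{i}, \mathcal{O}^{+}_{Y_{i}}/p^{n})$; since filtered colimits preserve almost isomorphisms, this persists after $\varinjlim_{i}$.

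Next, I would pass to the inverse limit in $n$. The cokernels $Q_{n}$ are annihilated by the maximal ideal $\mathfrak{m}\subset\mathcal{O}_{\textbf{C}}$, and therefore so is $\varprojlim_{n} Q_{n}$. Combined with the vanishing of $R^{1}\varprojlim_{n}(\mathcal{O}_{\textbf{C}}/p^{n})$ (Mittag--Leffler, since all transitions are surjective), this gives an almost isomorphism $\mathcal{O}_{\textbf{C}} \to R$ compatible with the inclusion of constants. Inverting $p$ kills the almost-difference (as $\mathfrak{m}[1/p] = \textbf{C}$), and using $R[1/p] = \Gamma(\tilde{Y}, \hat{\mathcal{O}}_{X})$ for qcqs $\tilde{Y}$, one obtains $\Gamma(\tilde{Y}, \hat{\mathcal{O}}_{X}) = \textbf{C}$. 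Hence $R$ embeds as a $p$-torsion-free subring of $\textbf{C}$ containing $\mathcal{O}_{\textbf{C}}$.

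Finally, I would exploit the almost-vanishing of $R/\mathcal{O}_{\textbf{C}}$, which means $\epsilon f \in \mathcal{O}_{\textbf{C}}$ for every $f \in R$ and every $\epsilon \in \mathfrak{m}$. For $f \in R \subset \textbf{C}$ and $\epsilon = p^{1/n}$ this reads $|f| \leq |p|^{-1/n}$; letting $n \to \infty$ forces $|f| \leq 1$, i.e.\ $f \in \mathcal{O}_{\textbf{C}}$. I expect the main subtlety to be the second step -- ensuring the almost isomorphism survives the inverse limit in $n$ -- which is precisely where the Mittag--Leffler property on the constant side and the $\mathfrak{m}$-annihilation of the cokernels are both essential.
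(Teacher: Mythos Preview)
Your proof is correct and follows essentially the same route as the paper: apply primitive comparison (\autoref{cisoo}) at each finite level $Y_i$, pass to the colimit over $i$ and then to the inverse limit over $n$ to obtain an almost isomorphism $\mathcal{O}_{\textbf{C}} \to \Gamma(\tilde{Y}, \hat{\mathcal{O}}^+_X)$, and finally upgrade it to an honest equality by embedding into $\hat{\mathcal{O}}_X(\tilde{Y}) = \textbf{C}$. The paper is terser---it works in the almost category throughout and notes that the colimit over $i$ is along isomorphisms (each $Y_i$ being connected)---but the substance is identical.
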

\begin{proof}
Let $\mathcal{O}^{+a}_{X}$ be the almost version of the integral structure sheaf on $X_{pro\acute{e}t}$. Then Theorem \ref{cisoo} gives $\mathcal{O}^{+a}_{X}/p^{n}(\tilde{Y})=\varinjlim_{i}\mathcal{O}^{+a}/p^{n}(Y_{i})=\mathcal{O}_{\textbf{C}}^{a}/p^{n}$ (As the $Y_{i}$ are all connected the direct limit is taken along isomorphisms). As the inverse limit of sheaves agrees with the inverse limit of presheaves, we get $\hat{\mathcal{O}}^{+a}_{X}(\tilde{Y})=\mathcal{O}_{\textbf{C}}^{a}$. But then again, as $\hat{\mathcal{O}}^{+}_{X}(\tilde{Y})\subset \hat{\mathcal{O}}_{X}(\tilde{Y})=\textbf{C}$, we see that $\hat{\mathcal{O}}^{+}_{X}(\tilde{Y})=\mathcal{O}_{\textbf{C}}$.
\end{proof}
\begin{rmk}
If the $Y_{i}$ are connected, one can also directly show that $\tilde{Y}$ is connected as well. For this note that $\tilde{Y}$ is quasi-compact by \cite[3.12 (v)]{Sch1}. Now suppose that $\vert \tilde{Y} \vert=V_{1}\cup V_{2}$ for some open and closed $V_{1}, V_{2}$. Then $V_{1}$ and $V_{2}$ are quasi-compact as closed subsets of a quasi-compact space. But then as quasi-compact opens they are given as pullbacks of open sets $V_{1}^{N}$ and $V_{2}^{N}$ in some $Y_{N}$. Also $V_{1}\to V_{1}^{N}$ (and $V_{2}\to V_{2}^{N}$) is surjective, as it can be written as an inverse limit of surjective maps with finite fibers. Moreover $V_{1}^{N}$ and $V_{2}^{N}$ cover $Y_{N}$ and the intersection $V_{1}^{N}\cap V_{2}^{N}$ is non-empty as $Y_{N}$ is connected. But then, as $V_{1}\to V_{1}^{N}$ and $V_{2}\to V_{2}^{N}$ are surjective, the intersection of $V_{1}$ with $V_{2}$ is non-empty as well.
\end{rmk}
Assume now that $\mathcal{E}^{+}$ is as above with $p$-adic completion $\hat{\mathcal{E}}^{+}$ trivialized on some connected profinite \'etale cover $f:\tilde{Y}=\varprojlim_{i}Y_{i}\to X$, so we have $\Gamma(\tilde{Y}, \hat{\mathcal{E}}^{+})\cong (\mathcal{O}_{\textbf{C}})^{r}$.\\
We will now adjust the exposition in \cite[\S 4]{DW3} to our setting to define an action of $\pi_{1}^{\acute{e}t}(X, x)$ on the fiber $\hat{\mathcal{E}}_{x}^{+}=\Gamma(x^{*}\hat{\mathcal{E}}^{+})$:\\
Pick a point $y:Spa(\textbf{C}, \mathcal{O}_{\textbf{C}})\to \tilde{Y}$ lying over $x$. As $\hat{\mathcal{E}}^{+}$ is trivial on $\tilde{Y}$ we have an isomorphism $y^{*}:\Gamma(\tilde{Y}, \hat{\mathcal{E}}^{+})\xrightarrow{\cong} \Gamma(x^{*}\hat{\mathcal{E}}^{+})$ by pullback (and using the natural identification $y^{*}f^{*}\cong (f\circ y)^{*}=x^{*}$). For any $g\in \pi_{1}(X)$ we get another point $gy$ lying above $x$. We can then define an automorphism on $\hat{\mathcal{E}}^{+}_{x}$ by
\begin{center}
$
\hat{\mathcal{E}}^{+}_{x}\xrightarrow{(y^{*})^{-1}}\Gamma(\tilde{Y}, \hat{\mathcal{E}}^{+})\xrightarrow{(gy)^{*}}\hat{\mathcal{E}}^{+}_{x}
$.
\end{center}
By this we get a map
\begin{center}
$\rho^{(\tilde{Y}, y)}_{\mathcal{E}^{+}}:\pi_{1}(X)\to GL_{r}(\hat{\mathcal{E}}^{+}_{x})$.
\end{center}
\begin{rmk}
What we mean by pullback along $y$ is the following: For any point $x\in X$, valued in $Spa(\textbf{C}, \mathcal{O}_{\textbf{C}})$, a point $y$ of $\tilde{Y}$ over $x$ is given by a compatible system of points $y_{i}:Spa(\textbf{C})\to Y_{i}$ over $x$. By this one gets compatible morphisms of (ringed) sites 
\begin{center}
$\xymatrix{
& (Y_{i})_{pro\acute{e}t}\cong X_{pro\acute{e}t}/Y_{i} \ar[d] \\
Spa(\textbf{C})_{pro\acute{e}t}\ar[ur]^{(y_{i})_{pro\acute{e}t}} \ar[r]^{x_{pro\acute{e}t}} & X_{pro\acute{e}t}.}$
\end{center}
This then gives a morphism $y_{pro\acute{e}t}:Spa(\textbf{C})_{pro\acute{e}t}\to 2-\varprojlim X_{pro\acute{e}t}/Y_{i}\cong X_{pro\acute{e}t}/\tilde{Y}$, lying over $x_{pro\acute{e}t}$. We write $y^{*}$ for the pullback on global sections along $y_{pro\acute{e}t}$. Alternatively, one may carry out the construction within the category of diamonds.
\end{rmk}
We need to show that this defines a continuous representation and is independent of the choices.
\begin{lem}\label{lem23}
Let $\tilde{Y}$, $y$ be as above. Let $\phi:\tilde{Z}\to \tilde{Y}$ be a morphism of connected objects in $X_{prof\acute{e}t}$, and let $z$ be a point lying above $y$. Then $\rho^{(\tilde{Y}, y)}_{\mathcal{E}^{+}}=\rho^{(\tilde{Z}, z)}_{\mathcal{E}^{+}}$.
\end{lem}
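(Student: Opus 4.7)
The plan is a direct computation using the functoriality of pullback together with the compatibility of the $\pi_1(X)$-action with morphisms of connected profinite \'etale covers. The key observation is that $\phi^{*}: \Gamma(\tilde{Y}, \hat{\mathcal{E}}^{+})\to \Gamma(\tilde{Z}, \hat{\mathcal{E}}^{+})$ is an isomorphism. Indeed, $\hat{\mathcal{E}}^{+}\vert_{\tilde{Y}}$ is trivial of rank $r$, hence so is $\hat{\mathcal{E}}^{+}\vert_{\tilde{Z}}$, and by the preceding lemma $\Gamma(\tilde{Y}, \hat{\mathcal{O}}_X^+)=\Gamma(\tilde{Z}, \hat{\mathcal{O}}_X^+)=\mathcal{O}_{\mathbf{C}}$, so both global section modules identify canonically with $\mathcal{O}_{\mathbf{C}}^{r}$ via the trivialization, and $\phi^{*}$ is compatible with this identification.

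Next I would unwind the definition of a point of a profinite \'etale cover. Since $z$ lies over $y$ (i.e.\ $\phi\circ z = y$ as morphisms $Spa(\mathbf{C},\mathcal{O}_{\mathbf{C}})\to \tilde{Y}$), functoriality of the pro-\'etale pullback gives the identity $y^{*}=z^{*}\circ \phi^{*}$ as maps $\Gamma(\tilde{Y},\hat{\mathcal{E}}^{+})\to \hat{\mathcal{E}}^{+}_{x}$. Now for $g\in \pi_{1}(X)$, the action on the geometric fibers of a connected profinite \'etale cover is natural in the cover: the map $\mathrm{Fib}_{x}(\tilde{Z})\to \mathrm{Fib}_{x}(\tilde{Y})$ induced by $\phi$ is $\pi_{1}(X)$-equivariant. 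In particular $\phi\circ (gz)=g(\phi\circ z)=gy$, so again $(gy)^{*}=(gz)^{*}\circ \phi^{*}$.

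Putting these together, using that $\phi^{*}$ is an isomorphism we may write
\begin{align*}
\rho^{(\tilde{Y},y)}_{\mathcal{E}^{+}}(g) &= (gy)^{*}\circ (y^{*})^{-1} \\
 &= \bigl((gz)^{*}\circ \phi^{*}\bigr)\circ \bigl(z^{*}\circ \phi^{*}\bigr)^{-1} \\
 &= (gz)^{*}\circ \phi^{*}\circ (\phi^{*})^{-1}\circ (z^{*})^{-1} \\
 &= (gz)^{*}\circ (z^{*})^{-1} \\
 &= \rho^{(\tilde{Z},z)}_{\mathcal{E}^{+}}(g),
\end{align*}
which proves the claim.

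The only genuinely non-formal step is the $\pi_{1}(X)$-equivariance $\phi(gz)=g\phi(z)$, which is not a computation but a consistency check for how the $\pi_{1}$-action on fibers has been set up. This follows from the equivalence between connected profinite \'etale covers of $X$ and profinite $\pi_{1}(X)$-sets (\cite[Proposition 3.5]{Sch1}), under which morphisms correspond to $\pi_{1}(X)$-equivariant maps of fibers; alternatively, if one prefers the deck-transformation description, $\pi_{1}(X)$ acts by automorphisms of the pro-object $\{Y_{i}\}$ over $X$ and any morphism of such pro-objects commutes with these automorphisms on the nose.
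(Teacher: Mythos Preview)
Your proof is correct and follows the same approach as the paper. The paper's proof simply records the commutative diagram
\[
\xymatrix{
\hat{\mathcal{E}}_{x}^{+}\ar[r]^-{(y^{*})^{-1}}\ar[d]^{=} & \Gamma(\tilde{Y}, \hat{\mathcal{E}}^{+})\ar[r]^-{(gy)^{*}}\ar[d]^{\phi^{*}} & \hat{\mathcal{E}}^{+}_{x}\ar[d]^{=}\\
\hat{\mathcal{E}}_{x}^{+}\ar[r]^-{(z^{*})^{-1}} & \Gamma(\tilde{Z}, \hat{\mathcal{E}}^{+})\ar[r]^-{(gz)^{*}} & \hat{\mathcal{E}}^{+}_{x}
}
\]
after noting that $gz$ lies above $gy$; your chain of equalities is exactly the diagram chase written out, with the added benefit that you make explicit both why $\phi^{*}$ is an isomorphism and why $\phi(gz)=g\,\phi(z)$.
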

\begin{proof}
For any $g\in \pi_{1}(X)$, $gz$ lies above $gy$. Then there is a commutative diagram
\begin{center}
$\xymatrix{
\hat{\mathcal{E}}_{x}^{+}\ar[r]^-{(y^{*})^{-1} }\ar[d]^{=} & \Gamma(\tilde{Y}, \hat{\mathcal{E}}^{+})\ar[r]^-{(gy)^{*}}\ar[d]^{\phi^{*}} & \hat{\mathcal{E}}^{+}_{x}\ar[d]^{=}\\
\hat{\mathcal{E}}_{x}^{+}\ar[r]^-{(z^{*})^{-1}} & \Gamma(\tilde{Z}, \hat{\mathcal{E}}^{+})\ar[r]^-{(gz)^{*}} & \hat{\mathcal{E}}^{+}_{x}
}$
\end{center}
which gives the claim.
\end{proof}
\begin{prop}\label{prop31}
The map $\rho^{(\tilde{Y}, y)}_{\mathcal{E}^{+}}$ mod $p^{n}$ has finite image for all $n\geq 0$.
\end{prop}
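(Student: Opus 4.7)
The plan is to exhibit, for each $n$, an open finite-index subgroup $H \subset \pi_{1}(X)$ on which $\rho^{(\tilde{Y}, y)}_{\mathcal{E}^{+}}$ reduces to the identity modulo $p^{n}$; the natural candidate is the stabilizer $H = \pi_{1}(Y_{i}, y_{i})$ for a suitable finite level $Y_{i}$ of the cover $\tilde{Y} = \varprojlim Y_{i}$. Since $\pi_{1}(X)/H$ is then finite, the image of $\rho$ modulo $p^{n}$ will be contained in the image of a set of coset representatives, hence finite.

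First I would apply Lemma \ref{lem31} to pick an index $i$ such that $\hat{\mathcal{E}}^{+}/p^{n}$ becomes trivial on $Y_{i}$, and fix a trivialization with basis $\tau_{1}, \ldots, \tau_{r} \in \Gamma(Y_{i}, \hat{\mathcal{E}}^{+}/p^{n})$. On the other hand, the hypothesis gives a basis $e_{1}, \ldots, e_{r}$ of $\Gamma(\tilde{Y}, \hat{\mathcal{E}}^{+}) \cong \mathcal{O}_{\textbf{C}}^{r}$, and the reductions $\bar{e}_{j}$ also trivialize $\hat{\mathcal{E}}^{+}/p^{n}$ on $\tilde{Y}$. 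Comparing the two trivializations over $\tilde{Y}$ yields a change-of-basis matrix $(a_{jk}) \in GL_{r}(\Gamma(\tilde{Y}, \hat{\mathcal{O}}^{+}/p^{n}))$ with $\bar{e}_{j} = \sum_{k} a_{jk} \tilde{\tau}_{k}$. Using the identification $\Gamma(\tilde{Y}, \hat{\mathcal{O}}^{+}/p^{n}) = \varinjlim_{i'} \mathcal{O}^{+}(Y_{i'})/p^{n}$ from \cite[Lemma 3.16]{Sch1}, I would enlarge $i$ so that the matrix $(a_{jk})$ already lives over $Y_{i}$.

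With this set-up in place, for any $g \in \pi_{1}(Y_{i}, y_{i})$ one has $g \cdot y_{i} = y_{i}$ inside the finite fiber $Y_{i}(x)$. Factoring pullback along the tower $\tilde{Y} \to Y_{i} \to X$, one computes that $y^{*}(e_{j}) \bmod p^{n}$ equals $\sum_{k} y_{i}^{*}(a_{jk}) \cdot y_{i}^{*}(\tau_{k})$, while $(gy)^{*}(e_{j}) \bmod p^{n}$ equals $\sum_{k} (gy_{i})^{*}(a_{jk}) \cdot (gy_{i})^{*}(\tau_{k})$; the two agree because $gy_{i} = y_{i}$. Hence $\rho^{(\tilde{Y}, y)}_{\mathcal{E}^{+}}(g) \equiv \mathrm{id} \pmod{p^{n}}$ for all $g \in H$, which gives the claim.

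The main obstacle is the descent step: one must verify that the change-of-basis matrix relating the reduction of the trivialization from $\tilde{Y}$ to the pulled-back trivialization from $Y_{i}$ truly descends to some finite level, and that the trivialization $\{\tau_{k}\}$ itself can be chosen on the same level (after possibly enlarging $i$). This is precisely where \cite[Lemma 3.16]{Sch1} together with Lemma \ref{lem31} are indispensable, since they express sections and isomorphisms over the pro-object $\tilde{Y}$ as filtered colimits of the corresponding data at finite levels. Once the descent is in place, the rest is functoriality of pullback together with the defining identity $gy_{i} = y_{i}$ for $g$ in the stabilizer.
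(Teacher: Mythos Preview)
Your argument is correct and in fact more elementary than the paper's. Both proofs begin the same way, invoking Lemma~\ref{lem31} to trivialize $\hat{\mathcal{E}}^{+}/p^{n}$ on a finite level $Y_{i}$. From there the paper passes to almost mathematics: using the primitive comparison theorem it identifies $\Gamma(Y_{i},\mathcal{E}^{+}/p^{n})^{a}\cong\Gamma(\tilde{Y},\mathcal{E}^{+}/p^{n})^{a}\cong(\mathcal{O}_{\textbf{C}}^{a}/p^{n})^{r}$, builds an auxiliary $\pi_{1}(X)$-action $\alpha^{(Y_{i},y_{i})}_{n}$ on the module of almost elements $(\hat{\mathcal{E}}^{+}_{x}/p^{n})_{*}$, and then observes that $\rho^{(\tilde{Y},y)}_{\mathcal{E}^{+}}\bmod p^{n}$ sits inside $\alpha^{(Y_{i},y_{i})}_{n}$ as a subrepresentation. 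Finiteness of the fiber of $Y_{i}$ over $x$ then finishes the proof. Your route bypasses the almost setup entirely: you descend not just the trivialization but the change-of-basis matrix to a finite level via the colimit description from \cite[Lemma~3.16]{Sch1}, and then directly verify that the open subgroup $\pi_{1}(Y_{i},y_{i})$ lies in the kernel of $\rho\bmod p^{n}$. This is cleaner and avoids invoking Theorem~\ref{cisoo}.

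One small correction: you write $\Gamma(\tilde{Y},\hat{\mathcal{O}}^{+}/p^{n})=\varinjlim_{i'}\mathcal{O}^{+}(Y_{i'})/p^{n}$, but \cite[Lemma~3.16]{Sch1} gives the colimit of $(\mathcal{O}^{+}/p^{n})(Y_{i'})$, not of $\mathcal{O}^{+}(Y_{i'})/p^{n}$; these need not agree. This does not affect your argument, since all you need is that the change-of-basis matrix (an element of $GL_{r}$ of the former ring) descends to a finite level, which the correct colimit statement still provides. In fact you can streamline further: by Lemma~\ref{lem31} the reductions $\bar{e}_{j}$ themselves already descend to some $Y_{i}$, so the separate choice of $\tau_{k}$ and the change-of-basis matrix are not really needed.
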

\begin{proof}
Let $\tilde{Y}=\varprojlim_{i}Y_{i}$ be a presentation where each $Y_{i}\to X$ is connected finite \'etale. Then $y$ corresponds to a compatible system $y_{i}$ of points of $Y_{i}$.\\
By Lemma \ref{lem31} $\hat{\mathcal{E}}^{+}/p^{n}$ is trivialized on some $Y_{i}\to X$. On the almost level, we then get 
\begin{center}
$\Gamma(\tilde{Y}, \mathcal{E}^{+}/p^{n})^{a}\cong \Gamma(Y_{i}, \mathcal{E}^{+}/p^{n})^{a}\cong (\mathcal{O}^{a}_{\textbf{C}}/p^{n})^{r}$,
\end{center}
where $r$ denotes the rank of $\mathcal{E}^{+}$. The reason here is that $\mathcal{O}_{Y_{j}}^{+a}/p^{n}(Y_{j})\to \mathcal{O}_{Y_{j'}}^{+a}/p^{n}(Y_{j'})$ is an isomorphism for any transition map $Y_{j'}\to Y_{j}$.\\
We thus get an action $\alpha^{(Y_{i}, y_{i})}_{n}$ of $\pi_{1}(X)$ on the $\mathcal{O}_{\textbf{C}}/p^{n}$-module of almost elements $(\hat{\mathcal{E}}^{+}/p^{n})_{*}$ via 
\begin{center}
$
(\hat{\mathcal{E}}^{+}_{x}/p^{n})_{*}\xrightarrow{(y_{i}^{*})^{-1}}\Gamma(Y_{i}, \hat{\mathcal{E}}^{+}/p^{n})_{*}\xrightarrow{(gy_{i})^{*}}((\hat{\mathcal{E}}^{+}_{x}/p^{n})_{*}
$.
\end{center}
But now the natural map $\Gamma(\tilde{Y}, \hat{\mathcal{E}}^{+})/p^{n}\to \Gamma(\tilde{Y}, \mathcal{E}^{+}/p^{n})_{*}=\Gamma(Y_{i}, \mathcal{E}^{+}/p^{n})_{*}$ is injective (after fixing a basis it is just given by the embedding $(\mathcal{O}_{\textbf{C}}/p^{n})^{r}\xhookrightarrow{} (\mathcal{O}_{\textbf{C}}/p^{n})^{r}_{*}$).\\
This realizes $\rho^{(\tilde{Y}, y)}_{\mathcal{E}^{+}} \otimes \mathcal{O}_{\textbf{C}}/p^{n}$ as a subrepresentation of $\alpha^{(Y_{i}, y_{i})}_{n}$. But now there are of course only finitely many points of $Y_{i}$ lying over $x$, so $\alpha^{(Y_{i}, y_{i})}_{n}$ has finite image, hence so has $\rho^{(\tilde{Y}, y)}_{\mathcal{E}^{+}} \otimes \mathcal{O}_{\textbf{C}}/p^{n}$.
\end{proof}
\begin{lem}
The map $\rho^{(\tilde{Y}, y)}_{\mathcal{E}^{+}}$ does not depend on $(\tilde{Y}, y)$.
\end{lem}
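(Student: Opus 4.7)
The plan is to reduce everything to the already-established \autoref{lem23} by constructing, from any two choices $(\tilde Y_1,y_1)$ and $(\tilde Y_2,y_2)$, a single connected profinite étale cover $(\tilde Z,z)$ that dominates both. Once this is done, \autoref{lem23} applied to the two morphisms $\tilde Z\to \tilde Y_1$ and $\tilde Z\to \tilde Y_2$ (with the compatible points) gives $\rho^{(\tilde Y_1,y_1)}_{\mathcal{E}^+} = \rho^{(\tilde Z,z)}_{\mathcal{E}^+} = \rho^{(\tilde Y_2,y_2)}_{\mathcal{E}^+}$.

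The construction of the common refinement is the only real content. First I would form the fibre product $\tilde Y_1\times_X \tilde Y_2$ in $X_{\mr{pro}\acute{e}t}$; writing $\tilde Y_k=\varprojlim_i Y_{k,i}$ as a cofiltered limit of connected finite étale covers of $X$, this is the same as $\varprojlim_{i,j}(Y_{1,i}\times_X Y_{2,j})$. The point $(y_1,y_2)$ gives a $Spa(\textbf{C},\mathcal{O}_{\textbf{C}})$-valued point of this cover lying over $x$. For each pair $(i,j)$, let $Z_{i,j}\subset Y_{1,i}\times_X Y_{2,j}$ be the connected component containing the image of $(y_1,y_2)$; these $Z_{i,j}$ are connected finite étale over $X$, the transition maps $Z_{i',j'}\to Z_{i,j}$ remain finite étale and surjective (by connectedness and by the fact that the images are open and closed), and the compatible choice of point on each $Z_{i,j}$ makes the cofiltered limit $\tilde Z:=\varprojlim_{i,j}Z_{i,j}$ a connected object of $X_{\mr{pro}\acute{e}t}$ equipped with a canonical point $z$ over $x$. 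By construction, the natural projections yield morphisms $\tilde Z\to \tilde Y_1$ and $\tilde Z\to \tilde Y_2$ sending $z$ to $y_1$ and $y_2$ respectively, and $\tilde Z$ is again connected in $X_{\mr{prof}\acute{e}t}$.

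Next I would check that $\hat{\mathcal{E}}^+$ is trivial on $\tilde Z$, which is immediate: pullback along $\tilde Z\to \tilde Y_1$ sends the fixed trivialization on $\tilde Y_1$ to a trivialization on $\tilde Z$, because the pullback of $\hat{\mathcal{O}}^+_{\tilde Y_1}$ along this morphism is $\hat{\mathcal{O}}^+_{\tilde Z}$. In particular $\rho^{(\tilde Z,z)}_{\mathcal{E}^+}$ is defined, and \autoref{lem23} applies to both $\phi_k:\tilde Z\to \tilde Y_k$ with $z$ above $y_k$ for $k=1,2$. This gives the desired equality, independently of the intermediate choice.

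The main (mild) obstacle is confirming that the connected components $Z_{i,j}$ assemble into a connected object of $X_{\mr{pro}\acute{e}t}$ with the properties required to apply \autoref{lem23}; concretely, that the transition maps between the chosen components remain surjective finite étale and that the limit is indeed connected in the sense used in the preceding remark. The surjectivity and finite étaleness follow because connected components of finite étale covers are themselves finite étale, and surjectivity onto $Z_{i,j}$ comes from the fact that the $\pi_1(X)$-orbit of the marked point exhausts each component; connectedness of $\tilde Z$ follows from the same argument used in the remark after \autoref{lem31}, namely that a partition of $\tilde Z$ into two quasi-compact opens would descend to some finite stage $Z_{i,j}$, contradicting its connectedness.
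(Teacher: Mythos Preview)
Your argument is correct and takes a genuinely different route from the paper's. The paper first isolates the independence of the point $y$ (for fixed $\tilde Y$) as a separate step: working modulo $p^n$, it embeds $\rho^{(\tilde Y,y)}\otimes\mathcal{O}_{\mathbf{C}}/p^n$ into the almost representation $\alpha_n^{(Y_i,y_i)}$ on a finite level, passes to the Galois closure $Y_i'$, and then uses that the Galois group acts simply transitively on the fibre over $x$ to check point-independence directly as in \cite[Lemma~4.5]{DW3}. Only after this does the paper invoke \autoref{lem23} to handle the change of cover.

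Your approach bypasses the mod-$p^n$/Galois-closure argument entirely: by choosing the connected component of $\tilde Y_1\times_X\tilde Y_2$ through the single point $(y_1,y_2)$, you arrange that one point $z$ of $\tilde Z$ lies over both $y_1$ and $y_2$ simultaneously, so two applications of \autoref{lem23} finish the job in one stroke. This is cleaner and more conceptual. The paper's route, on the other hand, has the side benefit of making explicit how the representation is controlled by the finite-level almost data $\alpha_n^{(Y_i,y_i)}$, a description that is reused later (e.g.\ in \autoref{lem24} and in the comparison with the Deninger--Werner construction in \autoref{dwthm}). Your justification of surjectivity of the transition maps $Z_{i',j'}\to Z_{i,j}$ via $\pi_1$-orbits is slightly roundabout; the simpler reason is that a finite \'etale map from a connected source has open and closed image, hence is surjective onto the connected target.
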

\begin{proof}
We only need to show the independence of the point $y$, the rest then follows from Lemma \ref{lem23}.\\
Moreover it is enough to show that $\rho^{(\tilde{Y}, y)}_{\mathcal{E}^{+}}\otimes \mathcal{O}_{\textbf{C}}/p^{n}$ is independent of  $y$ for any $n\geq 1$. Let $\tilde{Y}=\varprojlim Y_{i}$ be a presentation as before, and $y$ correspond to a compatible system of points $y_{i}$ of $Y_{i}$. Assume that $\hat{\mathcal{E}}^{+}/p^{n}$ becomes trivial on $Y_{i}$. Then by the proof of the previous proposition, $\rho^{(\tilde{Y}, y)}_{\mathcal{E}^{+}}\otimes \mathcal{O}_{\textbf{C}}/p^{n}$ is a subrepresentation of $\alpha_{n}^{(Y_{i}, y_{i})}$, so it is enough to show that $\alpha_{n}^{(Y_{i}, y_{i})}$ is independent of the point $y_{i}$ above $x$. Now consider the Galois closure $Y_{i}'\to X$ of $Y_{i}$. Again, as any point $y_{i}$ has a point $y_{i}'$ of $Y_{i}'$ lying above it, by Lemma \ref{lem23} it is enough to show that $\alpha_{n}^{(Y_{i}', y_{i}')}$ is independent of the point $y'$ above $x$. But now the Galois group $G=Aut_{X}(Y'_{i})^{opp}$ of $Y'_{i}\to X$ acts simply transitively on the points lying above $x$, and one can immediately check as in the proof of \cite[Lemma 4.5]{DW3}  that the representation is independent of the point $y_{i}'$. Remark for this that $\alpha_{n}^{(Y_{i}', y_{i}')}$ is given by transporting the $G$-action given by 
\begin{center}
$
\Gamma(f^{*}\hat{\mathcal{E}}^{+}/p^{n})_{*}\xrightarrow{f_{g}^{*}}\Gamma(f_{g}^{*}f^{*}\hat{\mathcal{E}}^{+}/p^{n})_{*}\xrightarrow{can}\Gamma(f^{*}\hat{\mathcal{E}}^{+}/p^{n})_{*}
$
\end{center}
to $(\hat{\mathcal{E}}^{+}_{x}/p^{n})_{*}$ via $(y_{i}')^{*}$ (see also \cite[Proposition 23]{DW1}). Here $f$ denotes the map $Y'_{i}\to X$, $f_{g}$ is the automorphism associated to $g$ and $can$ is induced by the natural identification\\ $f_{g}^{*}f^{*}\cong (f\circ f_{g})^{*}=f^{*}$.
\end{proof}
We thus see that we get a well defined continuous representation $\rho_{\mathcal{E}^{+}}$ associated to $\mathcal{E}^{+}$. Note that one does indeed get a representation, since if $g, h\in \pi_{1}(X)$ and $y$ is any point above $x$ one can write $\rho_{\mathcal{E}^{+}}(g)=(gy)^{*} \circ (y^{*})^{-1}$ and $\rho_{\mathcal{E}^{+}}(h)=(h(gy))^{*}\circ ((gy)^{*})^{-1}$ using that the construction is independent of the chosen point above $x$. From this one then gets 
\begin{center}
$\rho_{\mathcal{E}^{+}}(gh)=\rho_{\mathcal{E}^{+}}(g)\rho_{\mathcal{E}^{+}}(h)$.
\end{center}
We denote by $\mathcal{B}^{p\acute{e}t}(\mathcal{O}^{+}_{X})$ the category of locally free $\mathcal{O}^{+}_{X}$-modules, whose $p$-adic completion is trivialized on a profinite \'etale cover of $X$. One checks:
\begin{lem}
The category $\mathcal{B}^{p\acute{e}t}(\mathcal{O}^{+}_{X})$ is closed under taking tensor products, duals, internal homs, exterior products and extensions.
\end{lem}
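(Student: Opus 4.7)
The plan is to verify the closure properties in turn. For tensor products, duals, internal homs, and exterior powers, observe that if $\hat{\mathcal{E}}_1^+$ and $\hat{\mathcal{E}}_2^+$ are trivialized on profinite \'etale covers $\tilde Y_1, \tilde Y_2 \to X$, then the fibre product $\tilde Y_1 \times_X \tilde Y_2 \to X$ is again a profinite \'etale cover (it is the inverse limit of the $Y_{1,i} \times_X Y_{2,j}$, each finite \'etale over $X$), and on it both $\hat{\mathcal{E}}_i^+$ become free. Since $p$-adic completion commutes with tensor products, duals, internal homs, and exterior powers of finite locally free modules, the completions of the resulting tensor constructions are free on this combined cover, establishing these four cases.

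Closure under extensions is the substantive case. Given $0 \to \mathcal{E}_1^+ \to \mathcal{E}^+ \to \mathcal{E}_2^+ \to 0$ with ends in $\mathcal{B}^{p\acute{e}t}(\mathcal{O}^+_X)$ and $\mathcal{E}^+$ locally free, $p$-adic completion is exact (since $p$ is a nonzerodivisor on locally free modules), and after pulling back to a cover $\tilde Y \to X$ trivializing both $\hat{\mathcal{E}}_1^+$ and $\hat{\mathcal{E}}_2^+$ one obtains
\[
0 \to (\hat{\mathcal{O}}^+_{\tilde Y})^r \to \hat{\mathcal{E}}^+|_{\tilde Y} \to (\hat{\mathcal{O}}^+_{\tilde Y})^s \to 0,
\]
classified by a class $\alpha \in H^1(\tilde Y, \hat{\mathcal{O}}^+)^{\oplus rs}$. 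The plan is to trivialize $\alpha$ on a further profinite \'etale refinement, whereupon the extension splits and $\hat{\mathcal{E}}^+$ becomes free. Reducing modulo $p^n$, the colimit formula $H^1(\tilde Y, \mathcal{O}^+/p^n) = \varinjlim_i H^1(Y_i, \mathcal{O}^+/p^n)$ from \cite[Lemma 3.16]{Sch1} implies that $\alpha \bmod p^n$ is already realized on some $Y_{i(n)}$. By the primitive comparison theorem (Theorem \ref{cisoo}), this class corresponds almost canonically to an element of $H^1_{\acute{e}t}(Y_{i(n)}, \Z/p^n) \otimes \mathcal{O}_{\textbf{C}}/p^n$, i.e.\ to a $\Z/p^n$-torsor which is killed by pullback along a suitable finite \'etale cover. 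Assembling such refinements compatibly across all $n$ into a single profinite \'etale cover and reconstructing the $p$-adic completion via Proposition \ref{prop82} yields the desired trivialization of $\hat{\mathcal{E}}^+$.

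The principal technical difficulty is the ``almost'' qualifier in the primitive comparison theorem: killing the corresponding \'etale $\Z/p^n$-class only annihilates $\alpha \bmod p^n$ up to almost isomorphism, whereas a genuine splitting of an extension of finite free modules demands exact annihilation. I expect this gap to be closed either by enlarging the cover slightly so that the residual $\mathfrak{m}$-torsion in $H^1$ is absorbed, or by recasting the argument via the equivalence of Proposition \ref{repeq}: once one knows (as the remainder of this section will establish) that trivialization on a profinite \'etale cover is equivalent to descent to a continuous $\pi_1(X)$-representation, the assertion for extensions reduces to the tautology that an extension of continuous representations is again a continuous representation.
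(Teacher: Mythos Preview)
Your treatment of tensor products, duals, internal homs, and exterior powers is fine and matches what the paper leaves implicit.

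For extensions, your approach differs from the paper's and the gap you flag is real. The paper does \emph{not} attempt to kill the extension class modulo every $p^n$ separately and then assemble. Instead it works only modulo $p$: by Lemma~\ref{lem31} the ends $\mathcal{E}'/p$ and $\mathcal{E}''/p$ become trivial on a \emph{finite} \'etale cover $Y\to X$, so $\mathcal{E}/p|_Y$ is classified by a class in $H^1(Y,\mathcal{O}^+_Y/p)$; this class is then killed on a further finite \'etale cover (by exactly the almost-comparison-plus-$\epsilon$-trick argument that appears in the proof of Theorem~\ref{tf}); and finally Theorem~\ref{tf} is invoked to pass from ``$\mathcal{E}^+/p$ trivial on a finite \'etale cover'' to ``$\hat{\mathcal{E}}^+$ trivial on a profinite \'etale cover.'' The point is that the delicate almost-to-honest passage is concentrated in a single place (Theorem~\ref{tf}), where the argument ``almost trivial mod $p^2$ $\Rightarrow$ honestly trivial mod $p^{2-\epsilon}$, then iterate with $\sum\epsilon_k<\infty$'' does the work.

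Your first suggested fix (enlarge the cover to absorb $\mathfrak m$-torsion) is too vague as stated; enlarging along finite \'etale maps does not obviously kill almost-zero classes in $H^1(\mathcal{O}^+/p^n)$. Your second suggested fix (reduce to the statement that extensions of continuous representations are continuous representations, via Corollary~\ref{full}) would in fact work and is logically independent of this lemma, but it is a detour compared to the paper's route.
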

\begin{proof}
Assume that we have an extension
\begin{center}
$0\to \mathcal{E}'\to \mathcal{E}\to \mathcal{E}''\to 0$
\end{center}
of $\mathcal{O}_{X}^{+}$-modules, where $\mathcal{E}', \mathcal{E}''\in \mathcal{B}^{p\acute{e}t}(\mathcal{O}^{+}_{X})$. Then by Lemma \ref{lem31} $\mathcal{E}'/p$ and $\mathcal{E}''/p$ become trivial after pulling back to a finite \'etale cover $Y\to X$. Then $\mathcal{E}/p\vert_{Y}$ corresponds to a cohomology class in $H^{1}(\mathcal{O}^{+}_{Y}/p)$. This class again becomes trivial on a further finite \'etale covering (see the proof of Theorem \ref{tf} below). This means that $\mathcal{E}/p$ becomes trivial on a finite \'etale cover. But then $\mathcal{E}\in \mathcal{B}^{p\acute{e}t}(\mathcal{O}^{+}_{X})$ by Theorem \ref{tf}.\\
Closedness under the other operations is left to the reader.
\end{proof}
For any $\mathcal{E}^{+}\in \mathcal{B}^{p\acute{e}t}(\mathcal{O}^{+}_{X})$ we can always find a trivializing cover which is an inverse limit along connected finite \'etale maps:
\begin{lem}
Let $\mathcal{E}^{+}\in \mathcal{B}^{p\acute{e}t}(\mathcal{O}^{+}_{X})$. Then there is a profinite \'etale cover with presentation $\tilde{C}=\varprojlim C_{i}$ such that each $C_{i}$ is connected and $\hat{\mathcal{E}}^{+}$ is trivial on $\tilde{C}$.
\end{lem}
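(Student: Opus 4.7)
The plan is to refine a given trivializing profinite \'etale cover by passing to a compatible system of connected components, using that $X$ is connected.

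First I would start with some profinite \'etale cover $\tilde{Y}=\varprojlim_{i\in I}Y_{i}\to X$ on which $\hat{\mathcal{E}}^{+}$ becomes trivial (which exists by hypothesis), and fix a point $y:\Spec(\textbf{C},\mathcal{O}_{\textbf{C}})\to \tilde{Y}$ over $x$. This determines a compatible system of points $y_{i}\in Y_{i}$ lying over $x$. Without loss of generality I may assume each transition map $Y_{j}\to Y_{i}$ is finite \'etale (this is the shape of a profinite \'etale cover; see \cite[\S 3]{Sch1}).

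Next, for each $i$ let $C_{i}\subseteq Y_{i}$ be the connected component containing $y_{i}$. Since each $Y_{i}\to X$ is finite \'etale and $X$ is a proper (hence locally noetherian, qcqs) connected rigid analytic space, $Y_{i}$ has only finitely many connected components, each of which is open and closed in $Y_{i}$ and each of which maps to $X$ via a finite \'etale map. In particular $C_{i}\to X$ is finite \'etale, and because $X$ is connected and the image of $C_{i}$ is a nonempty clopen subset of $X$ (containing $x$), the map $C_{i}\to X$ is surjective. Hence each $C_{i}\to X$ is a connected finite \'etale cover. For transition maps $Y_{j}\to Y_{i}$ with $j\geq i$, the image of $C_{j}$ is a connected subset of $Y_{i}$ containing $y_{i}$, so it is contained in $C_{i}$. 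This produces a compatible inverse system of finite \'etale connected covers $\{C_{i}\}$, with $\tilde{C}:=\varprojlim_{i}C_{i}$ a profinite \'etale cover of $X$.

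Finally, the inclusions $C_{i}\hookrightarrow Y_{i}$ assemble into a canonical map $\tilde{C}\to \tilde{Y}$ over $X$. Since $\hat{\mathcal{E}}^{+}\vert_{\tilde{Y}}$ is trivial, its further restriction $\hat{\mathcal{E}}^{+}\vert_{\tilde{C}}$ is a fortiori trivial, which gives the claim.

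The only technical point worth singling out is the first step of the second paragraph: one needs that a finite \'etale map $Y_{i}\to X$ of rigid analytic spaces over $\mathbf{C}$ has only finitely many connected components, each clopen, and that these components are themselves finite \'etale surjective covers of the connected base $X$. This is standard in the theory of finite \'etale morphisms of adic spaces (cf.\ the Galois category formalism in \cite[\S 4]{dllz}), so it is not a real obstacle, only a fact one must invoke carefully.
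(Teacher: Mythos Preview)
Your proof is correct and follows the same idea as the paper: restrict to connected components of the $Y_{i}$ and assemble these into a tower that still maps to $\tilde{Y}$. The only difference in execution is that the paper first arranges the index set to be countable (using Lemma~\ref{lem31} to produce, for each $n$, a finite \'etale cover trivializing $\mathcal{E}^{+}/p^{n}$) and then iteratively chooses components $C_{1}\subset Y_{1}$, $C_{2}\subset C_{1}\times_{Y_{1}}Y_{2}$, etc., whereas you bypass the countability step by fixing a base point $y\in\tilde{Y}$ and letting $C_{i}$ be the component of $Y_{i}$ containing $y_{i}$; this makes the compatibility of the $C_{i}$ automatic and is a mild simplification, though the content is the same.
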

\begin{proof}
First we can assume that $\hat{\mathcal{E}}^{+}$ becomes trivial on $\tilde{Y}$ which has a presentation $\varprojlim_{I} Y_{i}$ along a countable index set $I$(use for this that $\mathcal{E}^{+}/p^{n}$ is trivial on some finite \'etale cover). Then pick a connected component $C_{1}\to Y_{1}$. Pull back $C_{1}$ along $Y_{2}\to Y_{1}$, to get a finite \'etale map $\overline{C}_{2}\to C_{1}$ and again choose a connected component $C_{2}\to \overline{C}_{2}\to C_{1}$. Then continuing like this we get a connected profinite \'etale cover $\varprojlim C_{i}$ with a map $\varprojlim C_{i}\to \tilde{Y}$, so that $\hat{\mathcal{E}}^{+}$ is trivial on $\tilde{C}$.
\end{proof}
\begin{thm}\label{thm21}
The association $\mathcal{E}^{+}\mapsto \rho_{\mathcal{E}^{+}}$ defines an exact functor 
\begin{center}
$\rho_{\mathcal{O}}:\mathcal{B}^{p\acute{e}t}(\mathcal{O}^{+}_{X})\to Rep_{\pi_{1}}(\mathcal{O}_{\textbf{C}})$.
\end{center}
Moreover $\rho_{\mathcal{O}}$ is compatible with tensor products, duals, inner homs and exterior products, and for every morphism $f:X'\to X$ of connected proper rigid analytic spaces over $Spa(\textbf{C}, \mathcal{O}_{\textbf{C}})$ we have $\rho_{f^{*}\mathcal{E}^{+}}=f^{*}\rho_{\mathcal{E}^{+}}$, where $f^{*}\rho_{\mathcal{E}^{+}}$, denotes the composition 
\begin{center}
$\pi_{1}(X', x')\xrightarrow{f_{*}} \pi_{1}(X, f(x'))\xrightarrow{\rho_{\mathcal{E}^{+}}} GL_{r}(\mathcal{O}_{\textbf{C}})$
\end{center}
and $x':Spa(\textbf{C}, \mathcal{O}_{\textbf{C}})\to X'$ is a point of $X'$.
\\[0.2in]
Define by $\mathcal{B}^{p\acute{e}t}(\mathcal{O}_{X}):=\mathcal{B}^{p\acute{e}t}(\mathcal{O}^{+}_{X})\otimes \Q$ the category of finite locally free $\mathcal{O}_{X}$-modules $\mathcal{E}$ for which there exists a locally free $\mathcal{O}_{X}^{+}$-module $\mathcal{E}^{+}$ with $\mathcal{E}^{+}[\frac{1}{p}]=\mathcal{E}$, and $\mathcal{E}^{+}\in \mathcal{B}^{p\acute{e}t}(\mathcal{O}_{X}^{+})$. \\
Passing to isogeny classes induces a functor 
\begin{center}
$
\rho:\mathcal{B}^{p\acute{e}t}(\mathcal{O}_{X})\to Rep_{\pi_{1}(X)}(\textbf{C})
$,
\end{center}
which is compatible with tensor products, duals, inner homs and exterior products.
\end{thm}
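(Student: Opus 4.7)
The object-level construction $\mathcal{E}^{+}\mapsto \rho_{\mathcal{E}^{+}}$ is already in place, together with its independence of the auxiliary data $(\tilde{Y},y)$; the plan is to extend it to a functor with the stated compatibilities by systematically transporting morphisms, tensor operations, and pullbacks through the two-step recipe ``restrict to the trivializing cover, then pull back to the point $x$''. For a morphism $\phi:\mathcal{E}^{+}\to\mathcal{F}^{+}$ in $\mathcal{B}^{p\acute{e}t}(\mathcal{O}^{+}_{X})$, I would first produce, using the preceding lemma, a single connected profinite \'etale cover $\tilde{Y}\to X$ trivializing both $\hat{\mathcal{E}}^{+}$ and $\hat{\mathcal{F}}^{+}$ (take a common refinement, then the connected component construction). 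Then define $\rho_{\mathcal{O}}(\phi)$ as $x^{*}\hat{\phi}:\hat{\mathcal{E}}^{+}_{x}\to\hat{\mathcal{F}}^{+}_{x}$. Equivariance under $\pi_{1}(X)$ is the commutativity of the square obtained by applying $\phi$ to the defining diagram of $\rho$, which reduces to naturality of pullback along the two maps $y,gy:\Spec(\mathbf{C})\to\tilde{Y}$; functoriality in $\phi$ is then immediate.

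For exactness, an exact sequence $0\to\mathcal{E}'\to\mathcal{E}\to\mathcal{E}''\to 0$ in $\mathcal{B}^{p\acute{e}t}(\mathcal{O}^{+}_{X})$ consists of finite locally free $\mathcal{O}^{+}_{X}$-modules, hence is locally split (using Lemma \ref{veceq} together with the remark following it to pass to a local splitting on the \'etale site). The fiber functor $x^{*}$ therefore preserves the sequence, so the induced sequence of $\mathcal{O}_{\mathbf{C}}$-modules is exact, and the $\pi_{1}(X)$-actions, defined termwise by pullback, assemble into an exact sequence of representations. The compatibilities with $\otimes$, duals, internal $\mathcal{H}om$ and exterior products all follow from the same mechanism: each of these operations commutes with restriction to $\tilde{Y}$ and with pullback along $y$ and $x$, and the action is defined in terms of such pullbacks, so the isomorphism $(\mathcal{E}^{+}\otimes\mathcal{F}^{+})_{x}\cong \mathcal{E}^{+}_{x}\otimes\mathcal{F}^{+}_{x}$ is equivariant, and similarly for the other constructions.

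For pullback compatibility, given $f:X'\to X$ and $x'$ over $x$, I would trivialize $\hat{\mathcal{E}}^{+}$ on $\tilde{Y}\to X$ and use the base change $\tilde{Y}':=\tilde{Y}\times_{X}X'\to X'$, which trivializes $f^{*}\hat{\mathcal{E}}^{+}$ by Lemma \ref{lem11}. Choose $y'$ over $x'$ and let $y$ be its image in $\tilde{Y}$. By the definition of $f_{*}:\pi_{1}(X',x')\to\pi_{1}(X,f(x'))$ in terms of the action on geometric fiber functors, $g\cdot y'$ projects to $f_{*}(g)\cdot y$, so the two defining diagrams for $\rho_{f^{*}\mathcal{E}^{+}}(g)$ and $\rho_{\mathcal{E}^{+}}(f_{*}g)$ coincide after identifying $(f^{*}\hat{\mathcal{E}}^{+})_{x'}\cong \hat{\mathcal{E}}^{+}_{f(x')}$. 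Finally, the passage to $\mathcal{B}^{p\acute{e}t}(\mathcal{O}_{X})$ is formal: any two integral models $\mathcal{E}_{1}^{+},\mathcal{E}_{2}^{+}$ of a given $\mathcal{E}$ admit a map $p^{N}\cdot\mathrm{id}:\mathcal{E}_{1}^{+}\to\mathcal{E}_{2}^{+}$ (and conversely), which becomes an isomorphism after tensoring with $\mathbf{C}$, so $\rho(\mathcal{E}):=\rho_{\mathcal{O}}(\mathcal{E}^{+})\otimes_{\mathcal{O}_{\mathbf{C}}}\mathbf{C}$ is well-defined and inherits all the compatibilities. The only genuinely delicate point is the bookkeeping when two different covers are used for two different bundles in a natural transformation, but this is handled by Lemma \ref{lem23}, which allows refinement to a common cover without changing the representation.
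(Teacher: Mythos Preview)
Your approach is essentially the same as the paper's: functoriality and tensor compatibilities are declared routine once one has a common connected trivializing cover, and the pullback compatibility is argued via the fiber product $\tilde{Y}\times_{X}X'$ together with the observation that $g\cdot y'$ projects to $f_{*}(g)\cdot y$. Two small points: first, the citation of Lemma~\ref{lem11} is misplaced, since that lemma concerns the compatibility of the ``$(-)^{+}$'' construction with morphisms of formal schemes, whereas what you need here is simply that restriction and pullback commute on the pro-\'etale site; second, you do not address the case where $\tilde{Y}\times_{X}X'$ fails to be connected, which the paper handles explicitly by passing to a connected profinite \'etale cover $\tilde{C}\to\tilde{Y}\times_{X}X'$ (via the connected-component construction of the preceding lemma) and then invoking Lemma~\ref{lem23}. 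This is a genuine, if routine, step, since the definition of $\rho_{f^{*}\mathcal{E}^{+}}$ requires a connected cover to ensure $\Gamma(\tilde{Y}',\widehat{f^{*}\mathcal{E}^{+}})\cong\mathcal{O}_{\mathbf{C}}^{r}$.
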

\begin{proof}
The functoriality of the construction is clear and the compatibility with the operations follows immediately using the previous lemma.\\
For compatibility with pullbacks, note that if $\hat{\mathcal{E}}^{+}$ is trivial on a connected profinite \'etale cover $\tilde{Y}\to X$, then $\widehat{(f^{*}\mathcal{E}^{+})}$ becomes trivial on the pullback $\tilde{Y}_{X'}\in X_{pro\acute{e}t}'$, which we first assume to be connected. There is a commutative diagram of ringed sites
\begin{center}
$\xymatrix{
X'_{pro\acute{e}t}/\tilde{Y}_{X'} \ar[r]^{pr_{\tilde{Y}}}\ar[d] & X_{pro\acute{e}t}/\tilde{Y}\ar[d] \\
X'_{pro\acute{e}t} \ar[r]^{f_{pro\acute{e}t}} & X_{pro\acute{e}t}
}$
\end{center}
By pullback one has an isomorphism
\begin{center}
$\Gamma(\tilde{Y}, \hat{\mathcal{E}}^{+})\xrightarrow{\cong}\Gamma(\tilde{Y}_{X'}, \widehat{f^{*}\mathcal{E}^{+}})$.
\end{center}
And for any $g\in \pi_{1}(X', x')$ and point $y'$ of $\tilde{Y}_{X'}$ above $x'$, one has $pr_{\tilde{Y}}(gy')=f'_{*}(g)y'$ and there is a commutative diagram
\begin{center}
$\xymatrix@C+4pc{
\hat{\mathcal{E}}_{f(x')}^{+}\ar[r]^-{(pr_{\tilde{Y}}(y')^{*})^{-1} }\ar[d]^{f^{*}} & \Gamma(\tilde{Y}, \hat{\mathcal{E}}^{+})\ar[r]^-{pr_{\tilde{Y}}(gy')^{*}}\ar[d]^{pr_{\tilde{Y}}^{*}} & \hat{\mathcal{E}}^{+}_{f(x')}\ar[d]^{f^{*}}\\
(\widehat{f^{*}\mathcal{E}^{+}})_{x'}\ar[r]^-{(y'^{*})^{-1}} & \Gamma(\tilde{Y}_{X'}, \widehat{f^{*}\mathcal{E}^{+}})\ar[r]^-{(gy')^{*}} & (\widehat{f^{*}\mathcal{E}^{+}})_{x'}.
}$
\end{center}
Now if $\tilde{Y}_{X'}$ is not connected we can again construct a connected cover $\tilde{C}$ as above such that $\widehat{f^{*}\mathcal{E}^{+}}$ is trivial on $\tilde{C}$ and there is a morphism $\tilde{C}\to \tilde{Y}_{X'}$ of objects in $X'_{pro\acute{e}t}$ which gives a commutative diagram
\begin{center}
$\xymatrix{
X'_{pro\acute{e}t}/\tilde{C} \ar[r]^{pr}\ar[d] & X_{pro\acute{e}t}/\tilde{Y}\ar[d] \\
X'_{pro\acute{e}t} \ar[r]^{f_{pro\acute{e}t}} & X_{pro\acute{e}t}
}$
\end{center}
and one can now go on as before.
\end{proof}
\subsection{Weil-Tate local systems}
For this section we assume that $(\textbf{C}, \mathcal{O}_{\textbf{C}})=(\mathbb{C}_{p}, \mathcal{O}_{\C_p})$. Let again $X$ denote a proper connected rigid analytic variety over $Spa(\mathbb{C}_{p}, \mathcal{O}_{\mathbb{C}_p})$. Recall that we have a functor $\mathbb{L}\mapsto \mathbb{L}\otimes \hat{\mathcal{O}}^{+}_{X}$ from $LS_{\mathcal{O}_{\C_p}}(X)$ to $LF(\hat{\mathcal{O}}_{X}^{+})$.
\begin{rmk}
Recall from Proposition \ref{repeq} that there is an equivalence of categories 
\begin{center}
$LS_{\mathcal{O}_{\C_p}}(X)\leftrightarrow Rep_{\pi_{1}(X, x)}(\mathcal{O}_{\C_p})$.
\end{center}
This equivalence can be realized in the following way: For any $\mathbb{L}\in LS_{\mathcal{O}_{\C_p}}(X)$ we can find a profinite \'etale cover $\tilde{Y}=\varprojlim Y_{i}$, where each $Y_{i}$ is connected, and such that $\mathbb{L}$ is trivial on $\tilde{Y}$. Then as above we may define the associated representation as 
\begin{center}
$\mathbb{L}_{x}\xrightarrow{(y^{*})^{-1}} \Gamma(\tilde{Y}, \mathbb{L})\xrightarrow{(gy)^{*}} \mathbb{L}_{x}$.
\end{center}
where $\mathbb{L}_{x}=\Gamma(x_{pro\acute{e}t}^{*}\mathbb{L})=\varprojlim (\tilde{\mathbb{L}}_{n})_{x_{\acute{e}t}}$, where $\tilde{\mathbb{L}}_{n}$ is the \'etale local system with $\nu^{*}\tilde{\mathbb{L}}_{n}=\mathbb{L}/p^{n}$, and $y$ is some point above $x$. This is independent of the cover and chosen point as above. Moreover, modulo $p^{n}$ we get the representation
\begin{center}
$(\mathbb{L}/p^{n})_{x}\xrightarrow{(y_{i}^{*})^{-1}} \Gamma(Y_{i}, \mathbb{L}/p^{n})\xrightarrow{(gy_{i})^{*}} (\mathbb{L}/p^{n})_{x}$,
\end{center} 
where now we can choose $Y_{i}\to X$ to be Galois with Galois group $G$. Then this action is the left action of $G$ on $\Gamma(Y_{i}, \mathbb{L}/p^{n})$ transported to $(\mathbb{L}/p^{n})_{x}$, via $y_{i}^{*}$.\\
Conversely, if $V$ is a finite free $\mathcal{O}_{\C_p}$-module with a continuous $\pi_{1}(X)$-action, we define an inverse system $\{\mathbb{L}_{n} \}$ as follows: Let $\pi_{1}(X)$ act on $V/p^{n}$ through the finite quotient $G$. Let $Y_{n}\to X$ be the finite \'etale Galois cover with group $G$. Then the action of $G$ on $V/p^{n}$ defines a Galois descent datum on the constant sheaf $\underline{V/p^{n}}$ on $Y_{n}$, by letting $\underline{V/p^{n}}\to f_{g}^{*}\underline{V/p^{n}}$ be the map defined by $g^{-1}$ for any $g\in G$, where $f_{g}\in Aut_{X}(Y_{i})=G^{opp}$ denotes the automorphism of $Y_{n}$ associated to $g$. This gives rise to a local system $\mathbb{L}_{n}$ on $X$.
\end{rmk}
\begin{lem}\label{lem24}
Let $\mathcal{E}^{+}\in \mathcal{B}^{p\acute{e}t}(\mathcal{O}_{X}^{+})$ and let $\mathbb{L}$ be the $\hat{\mathcal{O}}_{\mathbb{C}_p}$-local system corresponding to $\rho_{\mathcal{E}^{+}}$. Then $\hat{\mathcal{E}}^{+}\cong \hat{\mathcal{O}}^{+}_{X}\otimes_{\hat{\mathcal{O}}_{\mathbb{C}_p}}\mathbb{L}$.
\end{lem}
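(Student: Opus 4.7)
The plan is to establish the isomorphism modulo $p^n$ for each $n$ and then pass to the inverse limit. By Proposition \ref{prop31}, the representation $\rho_{\mathcal{E}^+}$ mod $p^n$ factors through a finite quotient $G_n$ of $\pi_1(X)$, corresponding to a connected finite \'etale Galois cover $f_n\colon Y_n \to X$. Enlarging $Y_n$ if necessary (using Lemma \ref{lem31}), I may assume in addition that $\hat{\mathcal{E}}^+/p^n$ and $\mathbb{L}/p^n$ are both trivial on $Y_n$, and that a chosen point $y_n$ above $x$ is compatible across $n$.

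On $Y_n$, fix a basis of $\Gamma(Y_n, \hat{\mathcal{E}}^+/p^n)$ trivializing the sheaf; this determines the Galois descent datum of $\hat{\mathcal{E}}^+/p^n$ along $f_n$. The $G_n$-action on the global sections, transported to the fiber $\hat{\mathcal{E}}_x^+/p^n$ via $y_n^{*}$, is by construction $\rho_{\mathcal{E}^+}$ mod $p^n$ (this is precisely the way $\rho^{(\tilde{Y},y)}_{\mathcal{E}^+}$ was set up, and via the almost argument in the proof of Proposition \ref{prop31} it descends from $\tilde{Y}$ to $Y_n$). By the explicit description recalled in the remark preceding the lemma, the local system $\mathbb{L}/p^n$ associated to $\rho_{\mathcal{E}^+}$ mod $p^n$ is obtained by Galois descent of the constant sheaf $(\mathcal{O}_{\C_p}/p^n)^r$ on $Y_n$ via the very same $G_n$-action. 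Tensoring with $\mathcal{O}_X^+/p^n$ commutes with this finite \'etale descent, so $(\mathbb{L}/p^n)\otimes \mathcal{O}_X^+/p^n$ is obtained from $(\mathcal{O}_{Y_n}^+/p^n)^r$ with the identical descent datum as $\hat{\mathcal{E}}^+/p^n$. The chosen trivializations therefore yield a canonical isomorphism $\hat{\mathcal{E}}^+/p^n \cong (\mathbb{L}/p^n)\otimes_{\mathcal{O}_{\C_p}/p^n}\mathcal{O}_X^+/p^n$, natural in $n$.

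Taking the inverse limit over $n$, and using $\hat{\mathcal{E}}^+ = \varprojlim \hat{\mathcal{E}}^+/p^n$ together with the definition of $\mathbb{L} \otimes_{\hat{\mathcal{O}}_{\C_p}} \hat{\mathcal{O}}_X^+$ as $\varprojlim (\mathbb{L}/p^n)\otimes \mathcal{O}_X^+/p^n$, produces the desired isomorphism. The main delicate point is the bookkeeping in the descent step: one has to verify that the left-versus-right action conventions match between the descent datum arising from the construction of $\rho_{\mathcal{E}^+}$ (which acts on points above $x$) and the one used in the remark to build a local system from a representation (which acts by inverting $g$ on sections before descending). Both conventions were set up so that the equivalence in Proposition \ref{repeq} intertwines them, so the identification is automatic once the conventions are aligned; no further input is required.
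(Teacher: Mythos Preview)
Your overall strategy---match descent data modulo $p^n$ on a finite Galois cover $Y_n$ and then pass to the limit---is the same as the paper's. However, there is a genuine gap at the step where you write ``fix a basis of $\Gamma(Y_n, \hat{\mathcal{E}}^+/p^n)$ trivializing the sheaf.'' The module $\Gamma(Y_n, \hat{\mathcal{E}}^+/p^n)$ is only \emph{almost} isomorphic to $(\mathcal{O}_{\C_p}/p^n)^r$: we have $\Gamma(Y_n,\mathcal{O}^+/p^n)\cong^a \mathcal{O}_{\C_p}/p^n$ but not on the nose, so there is no basis to fix. Even if you instead mean a sheaf-level trivialization $\hat{\mathcal{E}}^+/p^n|_{Y_n}\cong(\mathcal{O}^+_{Y_n}/p^n)^r$, the resulting descent matrices live in $GL_r(\Gamma(Y_n,\mathcal{O}^+/p^n))$, whereas the descent datum for $\mathbb{L}/p^n\otimes\mathcal{O}^+/p^n$ lies in the image of $GL_r(\mathcal{O}_{\C_p}/p^n)$. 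You need to produce a trivialization for which the $\hat{\mathcal{E}}^+/p^n$ descent datum actually lands in this smaller subgroup, and then identify it with $\rho_{\mathcal{E}^+}$ mod $p^n$; your appeal to Proposition~\ref{prop31} only gives that $\rho_{\mathcal{E}^+}/p^n$ is a \emph{subrepresentation} of the almost-level action $\alpha_n$, not an identification of descent data.

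The paper fills exactly this gap by working with $\Gamma(\tilde{Y},\hat{\mathcal{E}}^+)/p^n$, which \emph{is} honestly free of rank $r$ over $\mathcal{O}_{\C_p}/p^n$, and then proving that the natural map
\[
\phi:\Gamma(\tilde{Y},\hat{\mathcal{E}}^+)/p^n\otimes\mathcal{O}^+_{Y_i}/p^n\longrightarrow \mathcal{E}^+/p^n|_{Y_i}
\]
is an honest isomorphism (it is an almost isomorphism between finitely generated modules, hence surjective with almost-zero---and therefore zero---kernel in one direction, and one checks injectivity separately). This supplies the canonical trivialization whose descent datum is, by construction, the one used to build $\mathbb{L}/p^n$. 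Without this step, your claim that ``the identical descent datum'' arises is not justified, and the subsequent naturality in $n$ (needed for the inverse limit) is also unclear since your trivializations are merely ``chosen.''
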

\begin{proof}
We can compare the gluing data: Assume that $\hat{\mathcal{E}}^{+}$ and $\mathbb{L}$ are trivialized on $\tilde{Y}\to X$,  a connected profinite \'etale cover. By the discussion above we then have a $\pi_{1}(X,x)$-equivariant isomorphism $\underline{\Gamma(\tilde{Y}, \hat{\mathcal{E}}^{+})}\xrightarrow{\cong}\underline{\hat{\mathcal{E}}_{x}^{+}}=\mathbb{L}\vert_{\tilde{Y}}$, where $\underline{\Gamma(\tilde{Y}, \hat{\mathcal{E}}^{+})}:=\varprojlim\underline{\Gamma(\tilde{Y}, \hat{\mathcal{E}}^{+})/p^{n}}$, where $\underline{\Gamma(\tilde{Y}, \hat{\mathcal{E}}^{+})/p^{n}}$ denotes the constant sheaf on $X_{pro\acute{e}t}/\tilde{Y}$ associated to $\Gamma(\tilde{Y}, \hat{\mathcal{E}}^{+})/p^{n}$ and similarly $\underline{\hat{\mathcal{E}}_{x}^{+}}:=\varprojlim \underline{\hat{\mathcal{E}}_{x}^{+}}/p^{n}$. From this one also gets a $\pi_{1}(X)$-equivariant isomorphism 
\begin{center}
$f:\hat{\mathcal{E}}^{+}\vert_{\tilde{Y}}=\underline{\Gamma(\tilde{Y}, \hat{\mathcal{E}}^{+})}\otimes \hat{\mathcal{O}}^{+}_{\tilde{Y}}\xrightarrow{\cong} (\mathbb{L}\otimes \hat{\mathcal{O}}^{+}_{X})\vert_{\tilde{Y}}$. 
\end{center}
Then modulo $p^{n}$ this equivariant isomorphism descends to some $Y_{i}$, which we can assume to be a finite \'etale Galois cover (if $Y_{i}$ is not Galois we can pull back to the Galois closure $Y_{i}'$, and then extend $Y_{i}'$ to a connected profinite \'etale cover $\tilde{Y}' \to \tilde{Y}$ and pull the whole discussion back to $\tilde{Y}'$). We then get an equivariant isomorphism
\begin{center}
$\hat{\mathcal{E}}^{+}/p^{n}\vert_{Y_{i}}\cong (\hat{\mathcal{O}}^{+}\otimes \mathbb{L})/p^{n}\vert_{Y_{i}}$.
\end{center}
But here the $\pi_{1}(X)$-action is through the quotient $G$, where $G=Aut_{X}(Y_{i})^{opp}$ denotes the Galois group of $Y_{i}\to X$. But then the isomorphism desends to an isomorphism 
\begin{center}
$\hat{\mathcal{E}}^{+}/p^{n}\cong (\hat{\mathcal{O}}^{+}\otimes \mathbb{L})/p^{n}$.
\end{center}
To be a bit more precise we claim that, if $Y_{i}$ is such that $\mathcal{E}^{+}/p^{n}\vert_{Y_{i}}$ and $\mathbb{L}/p^{n}\vert_{Y_{i}}$ are trivial, there is a canonical isomorphism 
\begin{center}
$\mathcal{E}^{+}/p^{n}\vert_{Y_{i}}\cong \Gamma(\tilde{Y}, \mathcal{E}^{+})/p^{n}\otimes \mathcal{O}_{Y_{i}}^{+}/p^{n}$
\end{center}
which takes the canonical gluing datum on the left to the gluing datum obtained from the action on $\Gamma(\tilde{Y}, \mathcal{E}^{+})/p^{n}$, and such that
\begin{center}
$\mathcal{E}^{+}/p^{n}\vert_{Y_{i}}\cong \Gamma(\tilde{Y}, \mathcal{E}^{+})/p^{n}\otimes \mathcal{O}_{Y_{i}}^{+}/p^{n}\to (\hat{\mathcal{O}}^{+}\otimes \mathbb{L})/p^{n}\vert_{Y_{i}}$
\end{center}
descends $f$ mod $p^{n}$ to $Y_{i}$. For this, note that there is a commutative diagram 
\begin{center}
$\xymatrix{
 & \Gamma(Y_{i}, \mathcal{E}^{+}/p^{n}) \ar[r]\ar[d]& \Gamma(Y_{i}, \mathcal{E}^{+}/p^{n})_{*} \ar[d]^{\cong}\\
 \Gamma(\tilde{Y}, \hat{\mathcal{E}}^{+})/p^{n}\ar[r] & \Gamma(\tilde{Y}, \mathcal{E}^{+}/p^{n})\ar[r] & \Gamma(\tilde{Y}, \mathcal{E}^{+}/p^{n})_{*}
 }
 $
 \end{center}
where the vertical maps are just given by pulling back sections. This proves that the image of $\Gamma(\tilde{Y}, \hat{\mathcal{E}}^{+})/p^{n}$ in $\Gamma(Y_{i}, \mathcal{E}^{+}/p^{n})_{*}$ is contained in the image of $\Gamma(Y_{i}, \mathcal{E}^{+}/p^{n})$. For this note that $\Gamma(Y_{i}, \mathcal{O}_{Y_{i}}^{+}/p^{n})$ always contains a canonical copy of $\mathcal{O}_{\C_p}/p^{n}$ coming from the base. Fixing a trivialization of $\hat{\mathcal{E}}^{+}\vert_{\tilde{Y}}$ one sees that then the canonical copy of $(\mathcal{O}_{\C_p}/p^{n})^{r}$ ($r$ is the rank of $\mathcal{E}^{+}$) in $\Gamma(Y_{i}, \mathcal{E}^{+}/p^{n})$ is identified with $\Gamma(\tilde{Y}, \hat{\mathcal{E}}^{+})/p^{n}$ through the diagram above.\\
Moreover, as $\mathcal{E}^{+}/p^{n}$ is trivial on $Y_{i}$, the canonical map $\Gamma(Y_{i}, \mathcal{E}^{+}/p^{n})\otimes \mathcal{O}^{+}_{Y_{i}}/p^{n}\to \mathcal{E}^{+}/p^{n}\vert_{Y_{i}}$ is surjective and its kernel is almost zero (as it becomes an isomorphism after passing to almost modules). This produces a unique (injective) map $g$ fitting into the commutative diagram
\begin{center}
$\xymatrix{
\Gamma(Y_{i}, \mathcal{E}^{+}/p^{n})\otimes \mathcal{O}^{+}_{Y_{i}}/p^{n}\ar[r]\ar[dr] & \mathcal{E}^{+}/p^{n}\vert_{Y_{i}}\ar[d]^{g} \\
& \Gamma(Y_{i}, \mathcal{E}^{+}/p^{n})_{*}\otimes \mathcal{O}^{+}_{Y_{i}}/p^{n}.
}$
\end{center}
From the first diagram one sees that one then gets an injective map 
\begin{center}
$\phi: \Gamma(\tilde{Y}, \hat{\mathcal{E}}^{+})/p^{n} \otimes \mathcal{O}^{+}_{Y_{i}}/p^{n}\to \mathcal{E}^{+}/p^{n}\vert_{Y_{i}}$
\end{center}
of submodules of $\Gamma(\tilde{Y}, \hat{\mathcal{E}}^{+}/p^{n})_{*}\otimes \mathcal{O}^{+}_{Y_{i}}/p^{n}$. But now, as $\phi$ becomes an isomorphism after passing to almost modules, it must also already be surjective itself (as the image is finitely generated the cokernel could not be almost zero otherwise). Thus we have a canonical isomorphism
\begin{center}
$\phi^{-1}:\mathcal{E}^{+}/p^{n}\vert_{Y_{i}} \xrightarrow{\cong} \Gamma(\tilde{Y}, \hat{\mathcal{E}}^{+})/p^{n} \otimes \mathcal{O}^{+}_{Y_{i}}/p^{n}$
\end{center}
and one can now check that the canonical gluing datum on $\mathcal{E}^{+}/p^{n}\vert_{Y_{i}}$ is transported to the one coming from the action on $\Gamma(\tilde{Y}, \hat{\mathcal{E}}^{+})/p^{n}$ (recall that the $G$-action on $\Gamma(\tilde{Y}, \hat{\mathcal{E}}^{+})/p^{n}$ comes from the action on $\Gamma(Y_{i}, \hat{\mathcal{E}}^{+}/p^{n})_{*}$ which is induced by the left action of $G^{opp}$ on $Y_{i}$).\\
So all in all, $f$ mod $p^{n}$ descends to $X$ for all $n$, hence so does $f$ itself.
\end{proof}
Denote by $LF^{p\acute{e}t}(\hat{\mathcal{O}}_{X}^{+})$ the category of profinite \'etale trivializable modules over $\hat{\mathcal{O}}_{X}^{+}$ (we remark that by Theorem \ref{tf} this category is equivalent to the category of $\hat{\mathcal{O}}_{X}^{+}$-modules $\hat{\mathcal{E}}^{+}$ for which $\hat{\mathcal{E}}^{+}/p^{n}$ is trivial on a finite \'etale cover for some $n\geq 0$). The discussion from the previous section gives a functor 
\begin{center}
$\hat{\rho}_{\mathcal{O}}:LF^{p\acute{e}t}(\hat{\mathcal{O}}_{X}^{+})\to Rep_{\pi_{1}(X)}(\mathcal{O}_{\C_p})\to LS_{\mathcal{O}_{\C_p}}(X)$ 
\end{center}
(where the second functor is the equivalence from Proposition \ref{repeq}).
\begin{cor}\label{full}
The functor $LS_{\mathcal{O}_{\C_p}}(X)\to LF^{p\acute{e}t}(\hat{\mathcal{O}}_{X}^{+})$, taking $\mathbb{L}$ to $\hat{\mathcal{O}}_{X}^{+}\otimes \mathbb{L}$, is an equivalence of categories with quasi-inverse given by $\hat{\rho}_{\mathcal{O}}$.
\end{cor}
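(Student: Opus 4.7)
The plan is to show the two functors $F: \mathbb{L} \mapsto \hat{\mathcal{O}}_X^+ \otimes \mathbb{L}$ and $\hat{\rho}_{\mathcal{O}}$ are mutually quasi-inverse by producing natural isomorphisms for both compositions. First, $F$ does land in $LF^{p\acute{e}t}(\hat{\mathcal{O}}_X^+)$: if $\mathbb{L}$ is trivialized on a profinite \'etale cover $\tilde{Y} \to X$, then $F(\mathbb{L})\vert_{\tilde{Y}} \cong (\hat{\mathcal{O}}_{\tilde{Y}}^+)^r$ is again trivial. The natural isomorphism $F \circ \hat{\rho}_{\mathcal{O}} \cong \mathrm{id}$ is then essentially a restatement of Lemma \ref{lem24}: for $\hat{\mathcal{E}}^+ \in LF^{p\acute{e}t}(\hat{\mathcal{O}}_X^+)$ with associated local system $\mathbb{L} := \hat{\rho}_{\mathcal{O}}(\hat{\mathcal{E}}^+)$, that lemma provides a functorial isomorphism $\hat{\mathcal{E}}^+ \cong \hat{\mathcal{O}}_X^+ \otimes \mathbb{L} = F(\mathbb{L})$, and one checks it is natural in $\hat{\mathcal{E}}^+$.

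The main content is showing $\hat{\rho}_{\mathcal{O}} \circ F \cong \mathrm{id}$. Given $\mathbb{L} \in LS_{\mathcal{O}_{\C_p}}(X)$, I would compare the representation $\rho_{\mathbb{L}}$ on $\mathbb{L}_x$ attached to $\mathbb{L}$ via Proposition \ref{repeq} (as recalled in the remark opening this subsection) with the representation $\rho_{F(\mathbb{L})}$ on $F(\mathbb{L})_x$ constructed in Theorem \ref{thm21}, under the canonical identification $F(\mathbb{L})_x = (\hat{\mathcal{O}}_X^+)_x \otimes \mathbb{L}_x \cong \mathbb{L}_x$. Both representations are defined by the identical formula $g \mapsto (gy)^* \circ (y^*)^{-1}$ on global sections over a connected profinite \'etale cover $\tilde{Y} = \varprojlim Y_i$ trivializing $\mathbb{L}$ (and hence $F(\mathbb{L})$), where $y$ is a point above $x$. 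The canonical inclusion $\mathbb{L} \hookrightarrow \hat{\mathcal{O}}_X^+ \otimes \mathbb{L} = F(\mathbb{L})$ induces a $\pi_1(X)$-equivariant map $\Gamma(\tilde{Y}, \mathbb{L}) \to \Gamma(\tilde{Y}, F(\mathbb{L}))$ which is an isomorphism because $\Gamma(\tilde{Y}, \hat{\mathcal{O}}_X^+) = \mathcal{O}_{\C_p}$ (established above), and this identification is compatible with the fiber pullback $y^*$. Consequently the two representations agree on every $g$.

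The delicate point—the main obstacle—is justifying the identification $\Gamma(\tilde{Y}, \mathbb{L}) \cong \Gamma(\tilde{Y}, F(\mathbb{L}))$ rigorously, since tensor products of sheaves do not generally commute with arbitrary global sections. Following the template of the proof of Lemma \ref{lem24}, I would work modulo $p^n$ and descend to a finite \'etale Galois cover $Y_i \to X$ with group $G$ on which both $\mathbb{L}/p^n$ and $F(\mathbb{L})/p^n$ become trivial (which exists by Lemma \ref{lem31}). On $Y_i$, one identifies $\Gamma(Y_i, \mathbb{L}/p^n)$ and (using almost modules as a bookkeeping device) $\Gamma(Y_i, F(\mathbb{L})/p^n)$ with the same finite free $\mathcal{O}_{\C_p}/p^n$-module carrying the Galois descent datum that defines $\mathbb{L}$ mod $p^n$, after which the $G$-actions manifestly coincide. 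Passing to the inverse limit in $n$ via Proposition \ref{prop82} yields a natural isomorphism $\hat{\rho}_{\mathcal{O}}(F(\mathbb{L})) \cong \mathbb{L}$, completing the proof.
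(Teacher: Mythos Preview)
Your proposal is correct and follows essentially the same approach as the paper: both directions are handled exactly as you describe, with $F\circ\hat{\rho}_{\mathcal{O}}\cong\mathrm{id}$ coming from Lemma~\ref{lem24} and $\hat{\rho}_{\mathcal{O}}\circ F\cong\mathrm{id}$ coming from the commutative diagram comparing the two ``$(gy)^*\circ(y^*)^{-1}$'' constructions via the inclusion $\mathbb{L}\hookrightarrow\hat{\mathcal{O}}_X^+\otimes\mathbb{L}$. Your final ``delicate point'' paragraph is more than is needed---the paper simply records the commutative square and observes the middle vertical arrow is an isomorphism (both sheaves being trivial of the same rank on $\tilde{Y}$ with $\Gamma(\tilde{Y},\hat{\mathcal{O}}_X^+)=\mathcal{O}_{\C_p}$), without descending mod $p^n$ to a finite level---but your extra care is not incorrect, just redundant.
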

\begin{proof}
First note that any $\mathcal{O}_{\C_p}$-local system $\mathbb{L}$ becomes trivial on a profinite \'etale cover, so that $\mathbb{L}\otimes \hat{\mathcal{O}}^{+}_{X}\in LF^{p\acute{e}t}(\hat{\mathcal{O}}_{X}^{+})$. Using the lemma above one can then check that $\hat{\rho}_{\mathcal{O}}$ is a quasi-inverse.\\
From the remark above we know that if $\tilde{Y}$ is a connected profinite \'etale cover trivializing $\mathbb{L}$, then the representation associating to $g\in \pi_{1}(X, x)$ the automorphism
\begin{center}
$\mathbb{L}_{x}\xrightarrow{(y^{*})^{-1}} \Gamma(\tilde{Y}, \mathbb{L})\xrightarrow{(gy)^{*}} \mathbb{L}_{x}$
\end{center}
is the representation associated to $\mathbb{L}$ in Proposition \ref{repeq}. But then the commutative diagram 
\begin{center}
$
\xymatrix{
\mathbb{L}_{x}\ar[r]^-{(y^{*})^{-1}}\ar[d]^{=} &  \Gamma(\tilde{Y}, \mathbb{L})\ar[r]^{(gy)^{*}}\ar[d]^{\cong} & \mathbb{L}_{x}\ar[d]^{=}\\
(\hat{\mathcal{O}}^{+}_{X}\otimes \mathbb{L})_{x}\ar[r]^{(y^{*})^{-1}} & \Gamma(\tilde{Y}, \hat{\mathcal{O}}^{+}_{X}\otimes \mathbb{L})\ar[r]^-{(gy)^{*}} & (\hat{\mathcal{O}}^{+}_{X}\otimes \mathbb{L})_{x}
}
$
\end{center}
shows that $\hat{\rho}(\mathbb{L}\otimes \hat{\mathcal{O}}_{X}^{+})=\mathbb{L}$.
\end{proof}
Using results by Kedlaya-Liu we can then also show that $\mathcal{\rho}$ is fully faithful.
\begin{thm}\label{ff}
Assume that $X$ is seminormal. Then the functor 
\begin{center}
$\rho:\mathcal{B}^{p\acute{e}t}(\mathcal{O}_{X})\to Rep_{\pi_{1}(X)}(\mathbb{C}_{p})$
\end{center}
is fully faithful.
\end{thm}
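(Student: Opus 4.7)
The plan is to reduce full faithfulness to a statement about global sections of a single bundle, and then compute those global sections in two independent ways (one via the associated local system, one via the analytic structure sheaf).

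The category $\mathcal{B}^{p\acute{e}t}(\mathcal{O}_X)$ is a rigid tensor category (closed under duals and inner homs), and the functor $\rho$ is compatible with these operations by Theorem \ref{thm21}. Combined with the identifications $Hom(\mathcal{E},\mathcal{F})=H^0(X_{an},\mathcal{E}^\vee\otimes\mathcal{F})$ and $Hom_{\pi_1(X)}(\rho(\mathcal{E}),\rho(\mathcal{F}))=\rho(\mathcal{E}^\vee\otimes\mathcal{F})^{\pi_1(X)}$, full faithfulness of $\rho$ is equivalent to showing that for every $\mathcal{E}\in\mathcal{B}^{p\acute{e}t}(\mathcal{O}_X)$ the natural comparison map
\[
H^0(X_{an},\mathcal{E}) \longrightarrow \rho(\mathcal{E})^{\pi_1(X)}
\]
is an isomorphism; this is what I would work on.

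Pick a model $\mathcal{E}^+\in\mathcal{B}^{p\acute{e}t}(\mathcal{O}_X^+)$ of $\mathcal{E}$. Lemma \ref{lem24} provides a canonical isomorphism $\hat{\mathcal{E}}^+\cong\mathbb{L}\otimes_{\hat{\mathcal{O}}_{\mathbb{C}_p}}\hat{\mathcal{O}}_X^+$, where $\mathbb{L}$ is the $\mathcal{O}_{\mathbb{C}_p}$-local system corresponding to $\rho_{\mathcal{E}^+}$ via Proposition \ref{repeq}. I would then compute $H^0(X_{pro\acute{e}t},\hat{\mathcal{E}})$ by choosing a connected profinite \'etale cover $\tilde{Y}=\varprojlim Y_i\to X$ coming from a cofinal system of connected finite Galois covers $Y_i\to X$ and trivialising $\mathbb{L}$. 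The lemma just before Lemma \ref{lem23} gives $\Gamma(\tilde{Y},\hat{\mathcal{O}}_X^+)=\mathcal{O}_{\mathbb{C}_p}$, so inverting $p$ yields $\Gamma(\tilde{Y},\hat{\mathcal{E}})=V\otimes_{\mathcal{O}_{\mathbb{C}_p}}\mathbb{C}_p=\rho(\mathcal{E})$ as a $\mathbb{C}_p$-vector space carrying its natural $\pi_1(X)$-action (via the fibre $V=\mathbb{L}_x$). Galois descent along the pro-\'etale cover $\tilde{Y}\to X$, whose self-fibre product identifies with $\tilde{Y}\times\underline{\pi_1(X)}$, then identifies $H^0(X_{pro\acute{e}t},\hat{\mathcal{E}})$ with $\rho(\mathcal{E})^{\pi_1(X)}$ as the equalizer of the two Čech maps.

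The remaining and, to my mind, crucial step is to compare analytic and pro-\'etale global sections, that is, to show that the natural morphism $\mathcal{E}\to\lambda_*\hat{\mathcal{E}}$ on $X_{an}$ (coming from the unit $\mathcal{O}_X\to\hat{\mathcal{O}}_X$, with $\lambda\colon X_{pro\acute{e}t}\to X_{an}$ the projection) is an isomorphism. Working locally and using that $\mathcal{E}$ is finite locally free reduces this to the identity $\lambda_*\hat{\mathcal{O}}_X=\mathcal{O}_{X_{an}}$. By results of Kedlaya--Liu in relative $p$-adic Hodge theory, the sheaf $\lambda_*\hat{\mathcal{O}}_X$ is canonically the structure sheaf of the seminormalisation of $X$; the seminormality hypothesis on $X$ is precisely what is needed to make the identification hold, and this is the only place in the argument where it is used. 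This step is the main obstacle: everything else is a formal consequence of Section 3 and the preparatory lemmas, but the Kedlaya--Liu input is indispensable. Granting it, chaining the comparisons gives $H^0(X_{an},\mathcal{E})\cong H^0(X_{pro\acute{e}t},\hat{\mathcal{E}})\cong\rho(\mathcal{E})^{\pi_1(X)}$, which is the desired full faithfulness.
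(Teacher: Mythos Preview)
Your proposal is correct and follows essentially the same route as the paper: both factor $\rho$ through $E\mapsto E\otimes\hat{\mathcal{O}}_X$ followed by the passage to local systems, invoking Kedlaya--Liu (seminormality) for the first step and Corollary~\ref{full}/Lemma~\ref{lem24} for the second. The only difference is presentational---the paper states this as a composition of two fully faithful functors in two lines, while you unroll it explicitly at the level of $H^0$ via the inner-hom reduction and Galois descent along $\tilde{Y}\to X$.
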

\begin{proof}
The functor $\rho$ is given by composing $E\to E\otimes \hat{\mathcal{O}}$ with the functor $\hat{\rho}_{\mathcal{O}}\otimes \Q$. The latter is fully faithul by the previous corollary, while the full faithfulness of the functor $E\to E\otimes \hat{\mathcal{O}}$
follows from \cite[Corollary 8.2.4]{KLiu}, since $X$ is seminormal.
\end{proof}
We see that the vector bundles $E$ in $\mathcal{B}^{p\acute{e}t}(\mathcal{O}_{X})$ are precisely the vector bundles for which there exists an $\hat{\mathcal{O}}_{\C_p}$-local system $\mathbb{L}$ such that $E\otimes \hat{\mathcal{O}}_{X}\cong \hat{\mathcal{O}}_{X}\otimes \mathbb{L}$. We borrow the following terminology from \cite[Definition 10.3]{Xu}.
\begin{defn}\label{wt}
A vector bundle $E$ is called Weil-Tate if there exists an $\hat{\mathcal{O}}_{\C_p}$-local system $\mathbb{L}$ such that $E\otimes \hat{\mathcal{O}}_{X}\cong \hat{\mathcal{O}}_{X}\otimes \mathbb{L}$.\\
Similarly, the $\hat{\mathcal{O}}_{\C_p}$-local systems $\mathbb{L}$ that are associated to vector bundles in this sense are also called Weil-Tate
\end{defn}
\begin{rmk}
Let $X$ be a smooth rigid analytic variety over $Spa(K, \mathcal{O}_{K})$ for $K$ a finite extension of $\Q_{p}$. In \cite{LZ} Liu and Zhu have introduced a functor from the category of arithmetic $\hat{\Z}_{p}$-local systems on $X$ to the category of nilpotent Higgs bundles on $X_{\hat{\overline{K}}}$. Recall that a Higgs-bundle on $X_{\hat{\overline{K}}}$ with values in $\Omega^{1}_{X_{\hat{\overline{K}}}}(-1)$ (here $(-1)$ denotes the Tate twist) is a vector bundle $E$ on $X_{\hat{\overline{K}}}$ together with an endomorphism valued $1$-form, i.e. a section $\phi\in H^{0}(\mathcal{E}nd(E)\otimes  \Omega^{1}(-1))$, satisfying the integrability condition $\phi\wedge \phi=0$. We view the coherent sheaves here as sheaves on the \'etale site.\\
We give a quick recollection of their construction. Let $\mathcal{O}\mathbb{B}_{dR}$ be the de Rham structure sheaf defined in \cite{Schc}. It carries a flat connection $\nabla$ (acting trivially on $\mathbb{B}_{dR}$) and a filtration coming from the usual filtration on $\mathbb{B}_{dR}$. Then define $\mathcal{O}\C=gr^{0}(\mathcal{O}\mathbb{B}_{dR})$.\\
Let $\nu':X_{pro\acute{e}t}/X_{\hat{\overline{K}}}\to (X_{\hat{\overline{K}}})_{\acute{e}t}$ denote the natural projection. Then for any $\hat{\Z}_{p}$-local system $\mathbb{L}$ the associated Higgs bundle is defined to be $\mathcal{H}(\mathbb{L})=\nu'_{*}(\mathcal{O}\C\otimes_{\hat{\Z}_p} \mathbb{L})$, with Higgs field $\theta_{\mathbb{L}}$ coming from the Higgs field on $\mathcal{O}\C$. Note that $\mathcal{O}\C$ carries a Higgs field with values in $\Omega^{1}(-1)$ coming from the associated graded of $\nabla$.\\
One may try to write down this functor for geometric local systems, i.e. we will consider the functor $\mathcal{H}(\mathbb{L})=\nu'_{*}(\mathcal{O}\C\otimes_{\hat{\mathcal{O}}_{\C_p}} \mathbb{L})$ for $\mathcal{O}_{\C_p}$-local systems on $X_{\hat{\bar{K}}}$.\\
We wish to show that our constructions are compatible with this functor. In general, proving that this functor actually gives a Higgs bundle is a complicated endevour, and is what occupies the large part of \S 2 in \cite{LZ} (for arithmetic local systems). In our case however this is immediate, as the local system is already attached to a vector bundle.\\ 
More precisely we have the following result (compare also with \cite[Proposition 11.7]{Xu}):
\begin{prop}
Let $X$ be a smooth proper rigid analytic variety over $Spa(K, \mathcal{O}_{K})$. Let $E$ be a Weil-Tate vector bundle on $X_{\hat{\overline{K}}}$, viewed as a sheaf on the \'etale site. Let $\mathbb{L}$ be the $\hat{\mathcal{O}}_{\C_{p}}$-local system associated to $E$. \\
Then $\nu'_{*}(\mathcal{O}\C\otimes\mathbb{L})\cong E$, with vanishing Higgs field $\theta_{\mathbb{L}}=0$.
\end{prop}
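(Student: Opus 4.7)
The plan is to exploit the Weil--Tate isomorphism to transfer $\mathcal{O}\C \otimes \mathbb{L}$ to a base-change of $E$ from the \'etale site, apply $\nu'_{*}$ via the projection formula, and then use the Hodge--Tate identification $\nu'_{*}\mathcal{O}\C \cong \mathcal{O}_{X}$ to recover $E$. The vanishing of the Higgs field should drop out of the key fact that $\theta_{\mathcal{O}\C}$ vanishes on the subsheaf $\hat{\mathcal{O}}_{X} \subset \mathcal{O}\C$.

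First I would observe that the Weil--Tate hypothesis supplies an isomorphism $E \otimes_{\mathcal{O}_{X}} \hat{\mathcal{O}}_{X} \cong \hat{\mathcal{O}}_{X} \otimes_{\hat{\mathcal{O}}_{\C_{p}}} \mathbb{L}$ of $\hat{\mathcal{O}}_{X}$-modules on the pro-\'etale site of $X_{\hat{\overline{K}}}$. Tensoring both sides with $\mathcal{O}\C$ over $\hat{\mathcal{O}}_{X}$ along the natural map $\hat{\mathcal{O}}_{X} \hookrightarrow \mathcal{O}\C$ yields $E \otimes_{\mathcal{O}_{X}} \mathcal{O}\C \cong \mathcal{O}\C \otimes_{\hat{\mathcal{O}}_{\C_{p}}} \mathbb{L}$. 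Applying $\nu'_{*}$ and the projection formula to the left hand side (valid because $E$ is locally free and pulled back from the \'etale site) reduces it to $E \otimes_{\mathcal{O}_{X}} \nu'_{*}\mathcal{O}\C$, and the $i=0$ piece of the Hodge--Tate decomposition $\nu'_{*}\mathcal{O}\C \cong \mathcal{O}_{X}$ (from \cite{Schc}, as used throughout \cite{LZ}) identifies this with $E$. This establishes the first claim.

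For the Higgs field I would transport $\theta_{\mathbb{L}} = \theta_{\mathcal{O}\C} \otimes \mathrm{id}_{\mathbb{L}}$ across the isomorphism $\mathcal{O}\C \otimes \mathbb{L} \cong E \otimes_{\mathcal{O}_{X}} \mathcal{O}\C$ of the previous paragraph. Since that isomorphism is obtained by tensoring an $\hat{\mathcal{O}}_{X}$-linear identification with $\mathcal{O}\C$ over $\hat{\mathcal{O}}_{X}$, and since $\theta_{\mathcal{O}\C}$ is an $\hat{\mathcal{O}}_{X}$-linear derivation on $\mathcal{O}\C$ (induced from the $\mathbb{B}_{\mathrm{dR}}$-linear connection $\nabla$, which kills $gr^{0}\mathbb{B}_{\mathrm{dR}} = \hat{\mathcal{O}}_{X}$), the transported field equals $\mathrm{id}_{E} \otimes \theta_{\mathcal{O}\C}$. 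After $\nu'_{*}$ and a second projection-formula step the induced Higgs field on $E$ takes the form $\mathrm{id}_{E} \otimes \nu'_{*}\theta_{\mathcal{O}\C}$, and the key final point is that $\nu'_{*}\theta_{\mathcal{O}\C} : \mathcal{O}_{X} \to \mathcal{O}_{X} \otimes \Omega^{1}(-1)$ vanishes: the canonical identification $\mathcal{O}_{X} \cong \nu'_{*}\mathcal{O}\C$ factors through $\nu'_{*}\hat{\mathcal{O}}_{X}$, on which $\theta_{\mathcal{O}\C}$ is zero. The main subtlety that I anticipate is simply formulating these $\hat{\mathcal{O}}_{X}$-linearity and compatibility statements correctly at the level of pro-\'etale sheaves; otherwise the argument is formal and sidesteps the convergence/small-parameter work of \cite{LZ}, precisely because the vector bundle structure on $E$ is given from the start rather than constructed from $\mathbb{L}$.
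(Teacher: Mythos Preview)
Your proposal is correct and matches the paper's proof essentially step for step: both use the Weil--Tate isomorphism to rewrite $\mathcal{O}\C\otimes\mathbb{L}$ as $\nu'^{*}E\otimes_{\hat{\mathcal{O}}_{X}}\mathcal{O}\C$, apply the projection formula together with $\nu'_{*}\mathcal{O}\C=\mathcal{O}_{X_{\acute{e}t}}$ to recover $E$, and then argue that $\theta_{\mathbb{L}}$ vanishes because the Higgs field on $\mathcal{O}\C$ is trivial on the subsheaf $\hat{\mathcal{O}}_{X}$ (since $\nabla$ is $\mathbb{B}_{\mathrm{dR}}$-linear). Your version spells out the projection-formula and $\hat{\mathcal{O}}_{X}$-linearity steps slightly more carefully, but there is no substantive difference.
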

\begin{proof}
We have $\widehat{(\nu'^{*}E)}\cong \hat{\mathcal{O}}_{X}\otimes \mathbb{L}$. We have $gr^{0}\mathbb{B}_{dR}=\hat{\mathcal{O}}_{X_{\hat{\overline{K}}}}$, hence $\hat{\mathcal{O}}_{X_{\hat{\overline{K}}}}\subset \mathcal{O}\C$ (see \cite[Prop. 6.7]{Sch1}). Then $\mathcal{O}\C\otimes \mathbb{L}=\widehat{(\nu'^{*}E)}\otimes_{\hat{\mathcal{O}}_{X_{\hat{\overline{K}}}}}\mathcal{O}\C$. Since $\nu_{*}\mathcal{O}\C=\mathcal{O}_{X_{\acute{e}t}}$, we get $\nu'_{*}(\mathcal{O}\C\otimes\mathbb{L})\cong E$. \\
For the claim $\theta_{\mathbb{L}}=0$, note that the Higgs field on $\mathcal{O}\C$ is trivial on $\hat{\mathcal{O}}_{X_{\hat{\overline{K}}}}$ (as it comes from the connection $\nabla$ on $\mathcal{O}\mathbb{B}_{dR}$ which is trivial on $\mathbb{B}_{dR}$). But then the induced Higgs field on $\mathcal{H}(\mathbb{L})=\nu'_{*}(\widehat{(\nu'^{*}E)}\otimes_{\hat{\mathcal{O}}_{X_{\hat{\overline{K}}}}}\mathcal{O}\C)$ is simply obtained by trivially extending the Higgs field from $\nu'_{*}\mathcal{O}\C=\mathcal{O}_{X_{\acute{e}t}}$. But this is the zero Higgs field.
\end{proof}
\end{rmk}
\subsection{Frobenius-trivial modules}
In this section we will deal with $\mathcal{O}^{+}_{X}$-modules whose mod $p$ reduction is trivialized by some Frobenius pullback. Let $\mathcal{E}$ be an $\mathcal{O}^{+}_{X}/p$-module. Then  
$\mathcal{E}$ is called $F^{m}$-trivial if $\Phi^{m*}\mathcal{E}\cong (\mathcal{O}^{+}_{X}/p)^{r}$, where $\Phi$ denotes the Frobenius on $\mathcal{O}^{+}_{X}/p$. The goal of this section is to show that all such modules lie in $\mathcal{B}^{p\acute{e}t}(\mathcal{O}_{X}^{+})$.
\\
We have the following (Recall that $t\in \mathcal{O}_{\textbf{C}}^{\flat}$, such that $ t^{\sharp}= p $):
\begin{lem}\label{flem}
The Frobenius induces an equivalence of categories
\begin{center}
$
\{F^{m}$-trivial locally free $\mathcal{O}^{+}_{X}/p$-modules $\}$
\\
$\longleftrightarrow $
\\
$\{$locally free $\hat{\mathcal{O}}_{X^{\flat}}^{+}/t^{p^m}$-modules trivial mod $t\}$.
\end{center}
\end{lem}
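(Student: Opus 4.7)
The plan is to produce an isomorphism of sheaves of rings $\alpha: \mathcal{O}_{X}^{+}/p \xrightarrow{\sim} \hat{\mathcal{O}}_{X^{\flat}}^{+}/t^{p^m}$, built out of the tilting identification and the perfectness of the tilted structure sheaf, and then obtain the stated equivalence as restriction of scalars along $\alpha$, matching the two triviality conditions by a direct computation.

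I would start from the canonical identification $\hat{\mathcal{O}}_{X^{\flat}}^{+}/t \cong \mathcal{O}_{X}^{+}/p$: the $0$-th projection $\pi_{0}: \hat{\mathcal{O}}_{X^{\flat}}^{+} = \varprojlim_{\Phi}\mathcal{O}_{X}^{+}/p \to \mathcal{O}_{X}^{+}/p$ is a ring surjection, and since $t^{\sharp}=p$ it has kernel $(t)$. Because $\hat{\mathcal{O}}_{X^{\flat}}^{+}$ is perfect, the Frobenius $\Phi$ is a sheaf automorphism of it satisfying $\Phi(t)=t^{p}$; iterating, $\Phi^{m}$ descends to a ring isomorphism $\hat{\mathcal{O}}_{X^{\flat}}^{+}/t \xrightarrow{\sim} \hat{\mathcal{O}}_{X^{\flat}}^{+}/t^{p^m}$ given explicitly by $b\mapsto b^{p^m}$. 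Composing with the tilting identification yields the desired $\alpha$, and restriction of scalars along this isomorphism is immediately an equivalence between the categories of locally free $\hat{\mathcal{O}}_{X^{\flat}}^{+}/t^{p^m}$-modules and locally free $\mathcal{O}_{X}^{+}/p$-modules, preserving ranks.

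It remains to match the two triviality conditions under this restriction. For any $\hat{\mathcal{O}}_{X^{\flat}}^{+}/t^{p^m}$-module $\mathcal{F}$ one has $\mathcal{F}/t\mathcal{F} = \mathcal{F}\otimes_{\hat{\mathcal{O}}_{X^{\flat}}^{+}/t^{p^m}} \hat{\mathcal{O}}_{X^{\flat}}^{+}/t$. I would then transport the quotient map $\hat{\mathcal{O}}_{X^{\flat}}^{+}/t^{p^m}\twoheadrightarrow \hat{\mathcal{O}}_{X^{\flat}}^{+}/t \cong \mathcal{O}_{X}^{+}/p$ through $\alpha$: the composite $\mathcal{O}_{X}^{+}/p\xrightarrow{\alpha}\hat{\mathcal{O}}_{X^{\flat}}^{+}/t^{p^m}\twoheadrightarrow \mathcal{O}_{X}^{+}/p$ sends $a\mapsto a^{p^m}$, i.e.\ is the iterated Frobenius $\Phi^{m}$ on $\mathcal{O}_{X}^{+}/p$. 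Consequently if $\mathcal{E}:=\alpha^{*}\mathcal{F}$ is the corresponding $\mathcal{O}_{X}^{+}/p$-module, then $\mathcal{F}/t\mathcal{F}$ corresponds under $\alpha^{*}$ to $\mathcal{E}\otimes_{\mathcal{O}_{X}^{+}/p,\Phi^{m}}\mathcal{O}_{X}^{+}/p = \Phi^{m*}\mathcal{E}$, so ``trivial mod $t$'' on the tilt side is precisely $F^{m}$-triviality on the mixed-characteristic side. This yields the equivalence.

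There is no real obstacle beyond careful Frobenius bookkeeping; the one calculation to get right is the formula $\alpha(a)=a^{p^m}$ (lift $a$ to $\hat{\mathcal{O}}_{X^{\flat}}^{+}/t$ via the tilting iso, then apply $\Phi^{m}$ on the perfect sheaf $\hat{\mathcal{O}}_{X^{\flat}}^{+}$), which is exactly what identifies the reduction-mod-$t$ map with the $m$-th Frobenius and thereby matches the two triviality conditions.
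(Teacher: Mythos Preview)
Your argument is correct and is essentially the paper's own proof, only spelled out in more detail. The paper also uses that the Frobenius $\tilde{\Phi}$ on the perfect sheaf $\hat{\mathcal{O}}^{+}_{X^\flat}$ is an automorphism carrying $(t)$ to $(t^{p^m})$, hence inducing a ring isomorphism $\hat{\mathcal{O}}^{+}_{X^\flat}/t \cong \hat{\mathcal{O}}^{+}_{X^\flat}/t^{p^m}$, and then composes with the tilting identification $\hat{\mathcal{O}}^{+}_{X^\flat}/t \cong \mathcal{O}^{+}_{X}/p$; it phrases the resulting equivalence as ``$(\tilde{\Phi}^m)^*$ with quasi-inverse $(\tilde{\Phi}^{-m})^*$'' and leaves the matching of the two triviality conditions implicit, whereas you verify explicitly that the composite $\mathcal{O}^{+}_{X}/p \xrightarrow{\alpha} \hat{\mathcal{O}}^{+}_{X^\flat}/t^{p^m} \twoheadrightarrow \mathcal{O}^{+}_{X}/p$ is $\Phi^{m}$, so that reduction mod $t$ corresponds to $\Phi^{m*}$.
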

\begin{proof}
Denote the Frobenius on $\hat{\mathcal{O}}^{+}_{X^{\flat}}$ by $\tilde{\Phi}$. There is a commutative diagram
\begin{center}
$\xymatrix{
\hat{\mathcal{O}}^{+}_{X^{\flat}} \ar[d]^{\theta}\ar[r]^{\tilde{\Phi}}  & \hat{\mathcal{O}}^{+}_{X^{\flat}}\ar[d]^{\theta} \\
\hat{\mathcal{O}}^{+}_{X^{\flat}}/t \ar[r]^{\Phi} & \hat{\mathcal{O}}^{+}_{X^{\flat}}/t.
}$
\end{center}
As $\theta$ is surjective, we see that $\theta^{*}\theta_{*}\mathcal{M}=\mathcal{M}$ for any $\hat{\mathcal{O}}_{X^{\flat}}^{+}/t$-module $\mathcal{M}$. Here $\theta_{*}$ denotes the restriction of scalars. But then using the commutativity of the diagram above we see that $(\tilde{\Phi}^{m})^{*}$ defines the desired functor with quasi-inverse given by $(\tilde{\Phi}^{-m})^{*}$.
\end{proof}
\begin{rmk}
One also sees in the same manner that we also have an equivalence of categories
\begin{center}
$\{$ mod $p$ $F^{m}$-trivial locally free $\hat{\mathcal{O}}^{+}_{X}$-modules $\}$
\\
$\longleftrightarrow $
\\
$\{$ locally free $\mathbb{A}_{inf, X}/\tilde{\Phi}(\xi)$-modules trivial mod $(\xi, p)\}$
\end{center}
where $\mathbb{A}_{inf, X}=W(\hat{\mathcal{O}}^{+}_{X^{\flat}})$ and now $\tilde{\Phi}$ denotes the Frobenius on $\mathbb{A}_{inf, X}$. 
\end{rmk}
\begin{thm}\label{tf}
Let $\mathcal{E}^{+}$ be a locally free $\mathcal{O}^{+}_{X}$-module of rank $r$ such that $\mathcal{E}^{+}/p$ is $F^{m}$-trivial. Then there exists a profinite \'etale cover $\tilde{Y}=\varprojlim_{n} Y_{n}\to X$ such that $\hat{\mathcal{E}}^{+}\vert_{\tilde{Y}}\cong (\hat{\mathcal{O}}_{X}^{+}\vert_{\tilde{Y}})^{r}$.
\end{thm}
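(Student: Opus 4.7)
The plan is to construct a tower of finite étale covers $\cdots \to Y_{n+1} \to Y_n \to \cdots \to Y_1 \to X$ such that $\mathcal{E}^{+}/p^n$ admits a trivialization on $Y_n$ which is compatible under the reduction maps $\mathcal{E}^{+}/p^{n+1} \twoheadrightarrow \mathcal{E}^{+}/p^{n}$. Setting $\tilde{Y} = \varprojlim_n Y_n$, the resulting inverse system of trivializations will assemble into a trivialization of $\hat{\mathcal{E}}^{+}|_{\tilde{Y}}$, since sections of $\mathcal{O}^{+}_{X}/p^n$ over the qcqs object $\tilde{Y}$ are computed as filtered colimits along the tower (as in the proof of Lemma \ref{lem31}). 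The guiding principle is that obstructions to lifting trivializations of locally free $\mathcal{O}_{X}^{+}/p$-modules lie in first cohomology groups which, by the primitive comparison theorem (Theorem \ref{cisoo}), are almost isomorphic to $H^1(X_{\acute{e}t}, \mathbb{F}_p^k) \otimes \mathcal{O}_{\textbf{C}}/p$; since classes in $H^1(X_{\acute{e}t}, \mathbb{F}_p^k)$ classify finite étale $(\mathbb{Z}/p)^k$-torsors of $X$, they are killed by pulling back to the associated torsor.

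The base case is the only step where the $F^m$-triviality hypothesis enters. I would invoke Lemma \ref{flem} to transport $\mathcal{E}^{+}/p$ to a locally free $\hat{\mathcal{O}}^{+}_{X^{\flat}}/t^{p^m}$-module $\mathcal{F}$ that is trivial modulo $t$, and then lift the trivialization from $\mathcal{F}/t^k$ to $\mathcal{F}/t^{k+1}$ along the $t$-adic filtration. Each step carries an obstruction in $H^1(X, M_r(\hat{\mathcal{O}}^{+}_{X^{\flat}}/t))$, which via the almost identification of $\hat{\mathcal{O}}^{+}_{X^{\flat}}/t$ with $\hat{\mathcal{O}}^{+}_{X}/p$ (coming from the tilting projection) and Theorem \ref{cisoo} reduces to $H^1(X_{\acute{e}t}, \mathbb{F}_p^{r^2}) \otimes \mathcal{O}_{\textbf{C}}/p$; pulling back along the classifying finite étale torsor kills the class. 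After at most $p^m - 1$ such pullbacks, $\mathcal{F}$ (and hence $\mathcal{E}^{+}/p$) becomes trivial on some finite étale cover $Y_1 \to X$. The inductive step is the same argument applied to the short exact sequence $0 \to p^n \mathcal{E}^{+}/p^{n+1} \to \mathcal{E}^{+}/p^{n+1} \to \mathcal{E}^{+}/p^{n} \to 0$ restricted to $Y_n$: since $\mathcal{E}^{+}/p|_{Y_n}$ is trivial (as a quotient of the trivial $\mathcal{E}^{+}/p^n|_{Y_n}$), multiplication by $p^n$ identifies $p^n \mathcal{E}^{+}/p^{n+1}|_{Y_n}$ with $(\mathcal{O}^{+}_{Y_n}/p)^{r}$, so the obstruction sits in $H^1(Y_n, M_r(\mathcal{O}^{+}_{Y_n}/p))$ and is again killed by a single finite étale cover $Y_{n+1} \to Y_n$.

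The main obstacle I foresee is bridging almost and honest mathematics: Theorem \ref{cisoo} provides only an almost isomorphism, so the covers constructed above a priori only trivialize the obstructions in the almost category. I would handle this by running the entire construction in the almost framework to obtain an almost trivialization of $\hat{\mathcal{E}}^{+}$ on the resulting profinite étale cover $\tilde{Y}$, and then promote it to an honest trivialization using the explicit computation $\Gamma(\tilde{Y}, \hat{\mathcal{O}}^{+}_{X}) = \mathcal{O}_{\textbf{C}}$ established in the lemma preceding the construction of $\rho^{(\tilde{Y}, y)}_{\mathcal{E}^{+}}$, combined with the fact that $\hat{\mathcal{E}}^{+}|_{\tilde{Y}}$ is honestly locally free of the same rank $r$ as its almost version. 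A secondary subtlety is that the trivializations must be compatible across $n$ in order to pass to the completion $\hat{\mathcal{E}}^{+}$, but this is automatic from the obstruction-theoretic viewpoint, since at each step one \emph{extends} the previously chosen trivialization rather than choosing a fresh one.
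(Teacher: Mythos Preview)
Your overall strategy matches the paper's: obstruction-theoretic lifting along the $p$-adic (respectively $t$-adic) filtration, with obstructions in $H^{1}(M_{r}(\mathcal{O}^{+}_{X}/p))$, reduced via the primitive comparison theorem to classes in $H^{1}_{\acute{e}t}(X,\mathbb{F}_{p})\otimes\mathcal{O}_{\textbf{C}}/p$ and then killed on finite \'etale covers. The base case via Lemma~\ref{flem} is exactly how the paper treats $m>0$.

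The genuine gap is your promotion from almost to honest at the end. Knowing $\Gamma(\tilde{Y},\hat{\mathcal{O}}^{+}_{X})=\mathcal{O}_{\textbf{C}}$ and that $\hat{\mathcal{E}}^{+}\vert_{\tilde{Y}}$ is honestly locally free of rank $r$ does \emph{not} force an almost trivialization to be honest: already over $\mathcal{O}_{\textbf{C}}$, a module $M$ with $M^{a}\cong(\mathcal{O}_{\textbf{C}}^{a})^{r}$ need not be free (e.g.\ $M=\mathfrak{m}^{r}$), and nothing in your sketch excludes the sheaf-level analogue. The paper sidesteps this by never leaving the honest world. After passing to the finite \'etale cover that kills the image of the obstruction in $H^{1}_{\acute{e}t}\otimes\mathcal{O}_{\textbf{C}}/p$, the class $(g_{ij})$ in $H^{1}(M_{r}(\mathcal{O}^{+}_{X}/p))$ is only \emph{almost} zero, so for every $\epsilon>0$ the cocycle $p^{\epsilon}g_{ij}$ is an honest coboundary, say $p^{\epsilon}g_{ij}=p(\gamma_{j}-\gamma_{i})$. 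Dividing by $p^{\epsilon}$ shows $g_{ij}\equiv p^{1-\epsilon}(\gamma_{j}-\gamma_{i})\bmod p^{2-\epsilon}$, whence the $GL_{r}$-cocycle $1+g_{ij}$ is an honest coboundary modulo a slightly smaller power of $p$; that is, $\mathcal{E}^{+}/p^{2-\epsilon}$ is honestly trivial on this cover. Iterating with a sequence $\epsilon_{k}\to 0$ yields honest trivializations of $\mathcal{E}^{+}/p^{\,n-\sum_{k\le n}\epsilon_{k}}$ on a tower of finite \'etale covers, and since $n-\sum\epsilon_{k}\to\infty$ one gets an honest trivialization of $\hat{\mathcal{E}}^{+}$ on the inverse limit. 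This $\epsilon$-loss device is precisely the missing idea in your proposal.
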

\begin{proof}
Assume first that $m=0$, so that $\mathcal{E}^{+}/p$ is trivial. We want to show that $\mathcal{E}^{+}/p^{n}$ becomes trivial after passing to a further finite \'etale cover. We have an exact sequence:
\begin{center}
$
0\to M_{n}(\mathcal{I})\to GL_{r}(\mathcal{O}_{X}^{+}/p^{2})\to GL_{r}(\mathcal{O}_{X}^{+}/p)\to 1
$
\end{center}
where $\mathcal{I}=(p)\mathcal{O}_{X}^{+}/p^{2}$ and the first map is given by $A\mapsto 1+A$. Taking cohomology we get an exact sequence
\begin{center}
$
H^{1}(M_{n}(\mathcal{I}))\to H^{1}(GL_{r}(\mathcal{O}_{X}^{+}/p^{2}))\to H^{1}(GL_{r}(\mathcal{O}_{X}^{+}/p))
$.
\end{center}
Pick a pro-\'etale cover $\{U_{i} \}$ of $X$ on which $\mathcal{E}^{+}$ becomes trivial. Using the exact sequence above plus the fact that $\mathcal{E}^{+}/p$ is trivial we see that $\mathcal{E}^{+}/p^{2}$ is (after possibly refining the cover $\{U_{i} \}$) defined by a cocycle of the form $(id+g_{ij})_{ij}$ on the overlaps $U_{i}\times_{X}U_{j}$, where $(g_{ij})$ defines a class in $H^{1}(M_{n}(\mathcal{I}))$. Since $\mathcal{O}^{+}_{X}$ is $p$-torsion free we have an isomorphism of pro-\'etale sheaves $\mathcal{I}\cong \mathcal{O}^{+}_{X}/p$. Hence by the primitive comparison theorem \ref{cisoo} we see that $H^{1}(M_{n}(\mathcal{I}))=M_{n}(H^{1}(\mathcal{I}))$ is almost isomorphic to $M_{n}(H^{1}(X_{\acute{e}t}, \mathbb{F}_{p})\otimes \mathcal{O}_{\textbf{C}}/p)$. But the classes in the latter cohomology group become zero on suitable finite \'etale covers. Hence we can assume that the class defined by $(g_{ij})$ is almost trivial, which means that $p^{\epsilon}g_{ij}$ becomes a coboundary for any $\epsilon \in log\Gamma$. Write $p^{\epsilon}g_{ij}=p(\gamma_{j}-\gamma_{i})$, where the $\gamma_{i}$ are matrices with entries in $\mathcal{O}^{+}/p^{2}(U_{i})$. Then $p^{1-\epsilon}(\gamma_{j}-\gamma_{i})-g_{ij}$ is divisible by $p^{2-\epsilon}$. Hence $g_{ij}$ is given by a coboundary modulo $p^{2-\epsilon}$, so $\mathcal{E}^{+}/p^{2-\epsilon}$ is trivial.
Inductively we see that $\mathcal{E}^{+}$ can be trivialized on suitable finite \'etale covers modulo $p^{n-\sum_{k=1}^{n}\epsilon_{k}}$ where we can choose the sequence $\epsilon_{k}$ in such a way that $\epsilon_{k}\to 0$ as $k$ goes to infinity, hence giving the claim.
\\
\\
Now assume that $\Phi^{m*}(\mathcal{E}^{+}/p)$ is trivial for some $m>0$. Using Lemma \ref{flem} we see that $\mathcal{F}=\tilde{\Phi}^{m*}(\mathcal{E}^{+}/p)$ is a locally free $\hat{\mathcal{O}}_{X^{\flat}}^{+}/t^{p^m}$-module trivial mod $t$. The obstruction for triviality of $\mathcal{F}/t^{2}$ lies again in $M_{n}(H^{1}(\hat{\mathcal{O}}^{+}_{X^{\flat}}/t))=M_{n}(H^{1}(\mathcal{O}^{+}_{X}/p))$. Now applying the same arguments as in the first part of the proof, we see that $\mathcal{F}$ becomes trivial on a finite \'etale cover $Y\to X$. But then, applying $(\tilde{\Phi}^{-m})^{*}$, we see that $\mathcal{E}^{+}/p$ becomes trivial on $Y$ as well.
\end{proof}
We found out that the idea for the first part of the proof of the last theorem is essentially already contained in \cite[\S 5]{Fa}.\\
For generalities on non-abelian cohomology on sites we refer to \cite{gir} (see in particular \cite[III Proposition 3.3.1]{gir}).
\section{The Deninger-Werner correspondence for rigid analytic varieties}
We wish to use the results from the previous section to give a new approach to the Deninger-Werner correspondence, which works for general (seminormal) proper rigid analytic varieties. 
\subsection{Numerically flat vector bundles}
We first need to generalize the results from \cite[\S 2]{DW3} on numerically flat vector bundles over finite fields to the non-projective case. For any $\mathbb{F}_{p}$-scheme $Y$ we will denote by $F_{Y}$ the absolute Frobenius morphism of $Y$. For a vector bundle $E$ on $Y$ we denote its dual by $E^{\vee}$. Recall that a vector bundle $E$ on a smooth projective curve $C$ is called semistable if for all subbundles $E'\subset E$ we have $\mu(F')\leq \mu(F)$, where $\mu=\frac{deg}{rk}$ denotes the slope.
\begin{prop}\label{propnflat}
Let $k$ be a perfect field. Let $Y$ be a proper, connected $k$-scheme and $E$ a vector bundle on $Y$. Then the following are equivalent:
\begin{itemize}
\item For all morphisms $f:C\to Y$ from a smooth projective curve $C$, we have that $f^{*}E$ is semistable of degree $0$. (This is also called Nori-semistability)
\item The canonical line bundles $\mathcal{O}_{\mathbb{P}(E)}(1)$, resp. $\mathcal{O}_{\mathbb{P}(E^{\vee})}(1)$ on the associated projective bundle $\mathbb{P}(E)$, resp. $\mathbb{P}(E^{\vee})$ are numerically effective.
\end{itemize}
\end{prop}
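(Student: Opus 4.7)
The plan is to reduce both implications to the already known projective case by means of Chow's lemma.

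First recall the projective case, which is the content of \cite[\S 2]{DW3} over $\F_p$; the argument goes through verbatim over any perfect field $k$. Using the convention that $\mathbb{P}(E)$ parametrises line bundle quotients of $E$, a morphism $h:C\to\mathbb{P}(E)$ from a smooth projective curve amounts to $f=p\circ h:C\to Y$ together with a line bundle quotient $f^{*}E\twoheadrightarrow L=h^{*}\mathcal{O}_{\mathbb{P}(E)}(1)$. Consequently, nefness of $\mathcal{O}_{\mathbb{P}(E)}(1)$ is equivalent to $\mu_{\min}(f^{*}E)\geq 0$ for every such $f$, and nefness of $\mathcal{O}_{\mathbb{P}(E^{\vee})}(1)$ to $\mu_{\max}(f^{*}E)\leq 0$. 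Together these force $\mu_{\max}(f^{*}E)=\mu_{\min}(f^{*}E)=0$, i.e.\ semistability of degree $0$; the converse implication is immediate from the definition of semistability. This settles the projective case.

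To pass from projective to proper, apply Chow's lemma to obtain a proper birational surjection $\pi:Y'\to Y$ with $Y'$ projective over $k$. I claim that either condition of the proposition holds for $E$ on $Y$ if and only if it holds for $\pi^{*}E$ on $Y'$. In the forward direction both assertions are clear: every morphism from a smooth projective curve to $Y'$ composes with $\pi$ to a morphism to $Y$, and the base change $\tilde\pi:\mathbb{P}(\pi^{*}E)\to\mathbb{P}(E)$ satisfies $\mathcal{O}_{\mathbb{P}(\pi^{*}E)}(1)=\tilde\pi^{*}\mathcal{O}_{\mathbb{P}(E)}(1)$. For the converse, given $f:C\to Y$ from a smooth projective curve, the fibre product $C\times_{Y}Y'$ admits an irreducible component dominating $C$ because $\pi$ is proper surjective; normalising that component gives a finite surjective morphism $g:C'\to C$ of smooth projective curves together with a lift $f':C'\to Y'$ of $f\circ g$. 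Semistability of degree $0$ of $(f')^{*}\pi^{*}E=g^{*}f^{*}E$ descends to $f^{*}E$, since $g$ multiplies degrees by $\deg g>0$ and pulls back any destabilising subbundle of $f^{*}E$ to a destabilising subbundle of $g^{*}f^{*}E$. The nef conditions on $\mathcal{O}_{\mathbb{P}(E)}(1)$ and $\mathcal{O}_{\mathbb{P}(E^{\vee})}(1)$ transfer by exactly the same curve-lifting argument applied on $\mathbb{P}(E)$ and $\mathbb{P}(E^{\vee})$, using that nefness of a line bundle is tested by degrees on smooth projective curves and that these degrees also descend along finite surjective maps.

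The only delicate step is the curve-lifting: a curve $f:C\to Y$ need not lift to $Y'$ directly (it may lie in the exceptional locus of $\pi$), but it always lifts after passing to a finite cover of $C$, and the relevant properties (semistability, degree zero, nonnegative degree of a line bundle) are insensitive to such finite covers of smooth projective curves. Granting this, the reduction to the projective case is routine.
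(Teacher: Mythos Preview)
The paper states this proposition without proof, treating the equivalence as a standard fact about numerically flat bundles; there is therefore no ``paper's own proof'' to compare against.

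Your argument is correct, but the Chow's lemma reduction is superfluous. The direct argument you sketch for the projective case nowhere uses projectivity of $Y$: a map $h:C\to\mathbb{P}(E)$ from a smooth projective curve is, by the universal property of the projective bundle over \emph{any} base, the same as a map $f:C\to Y$ together with a line quotient $f^{*}E\twoheadrightarrow h^{*}\mathcal{O}(1)$, and the intersection-theoretic facts you use (nefness tested on curves, $\mu_{\min}$ and $\mu_{\max}$) live entirely on the curve side. So the equivalence holds for proper $Y$ by exactly the computation you wrote down first, and the second half of your argument (lifting curves through $\pi:Y'\to Y$ after a finite cover, checking that semistability and nefness are insensitive to such covers) is unnecessary. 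It is not wrong, just redundant: you have proved the proposition twice.

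One small point worth making explicit in the direct argument: the implication from ``every line quotient of every $f^{*}E$ has nonnegative degree'' to ``$\mu_{\min}(f^{*}E)\geq 0$ for every $f$'' uses that the quantification over $f$ already absorbs finite covers of $C$. Concretely, if some $f^{*}E$ had a quotient bundle $Q$ of negative degree, then $\mathcal{O}(1)$ restricted to $\mathbb{P}(Q)\subset\mathbb{P}(f^{*}E)$ has negative top self-intersection, hence is not nef there, producing a curve $D\to\mathbb{P}(E)$ with $\deg\mathcal{O}(1)\vert_{D}<0$; the induced map $D\to Y$ is the new $f$. You pass over this step rather quickly, but it is the only place where any care is needed.
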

\begin{defn}
A vector bundle $E$ satisfying the equivalent conditions in Proposition \ref{propnflat} is called numerically flat.
\end{defn}
\begin{rmk}
\begin{itemize}
\item Assume for simplicity that $Y$ is a smooth, projective curve. Over a characteristic $0$ field one can check that numerically flat vector bundles are simply the semistable vector bundles of degree $0$.\\
One of the main complications in the theory of vector bundles over a field of positive characteristic is the fact that a semistable bundle might become unstable after pullback along an inseparable morphism (in contrast to this, semistability is preserved under pullback along any separable map). In characteristic $p$ the numerically flat vector bundles coincide with so called strongly semistable vector bundles of degree $0$, i.e. bundles $E$ for which $F^{n*}E$ is semistable of degree $0$ for all $n\geq 0$.
\item In contrast to the category of degree $0$ semistable vector bundles, the category of numerically flat bundles is still well behaved over a field of positive characteristic. In particular it is a neutral Tannakian category, which has been extensively studied in \cite{La}.
\item A standard example of a semistable vector bundle in positive characteristic which becomes unstable after Frobenius pullback is given by $F_{C*}\mathcal{O}_{C}$ for a smooth projective curve $C$ of genus $\geq 2$. In this case a direct computation of the degree shows that $F_{C}^{*}F_{C*}\mathcal{O}_{C}\rightarrow \mathcal{O}_{C}$ destabilizing.
\end{itemize}
\end{rmk}
We record the following 
\begin{lem}\label{lem41}
Let $Y$ be a proper connected scheme over a perfect field $k$. Then a vector bundle $E$ is numerically flat on $Y$ if and only if $f^{*}E$ is numerically flat for any proper surjective morphism $f:Z\to Y$.\\
Moreover $E$ is numerically flat if and only if $E_{k'}$ is numerically flat for any field extension $k'/k$.
\end{lem}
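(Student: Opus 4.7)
The plan is to prove both assertions by reducing them to the analogous preservation/detection results for nef line bundles, exploiting the second characterisation in Proposition~\ref{propnflat}: namely that $E$ is numerically flat iff both $\mathcal{O}_{\mathbb{P}(E)}(1)$ and $\mathcal{O}_{\mathbb{P}(E^{\vee})}(1)$ are nef.

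For the first assertion, observe that projective bundle formation is compatible with base change: $\mathbb{P}(f^{*}E)=\mathbb{P}(E)\times_{Y}Z$ and $\mathbb{P}(f^{*}E^{\vee})=\mathbb{P}(E^{\vee})\times_{Y}Z$, the projections to $\mathbb{P}(E)$ and $\mathbb{P}(E^{\vee})$ are proper surjective (since $f$ is), and the respective $\mathcal{O}(1)$ bundles pull back to each other. It therefore suffices to establish the following general fact: for a proper surjective morphism $g\colon W'\to W$ of proper $k$-schemes, a line bundle $L$ on $W$ is nef iff $g^{*}L$ is nef. The forward direction is immediate, since any morphism from a smooth projective curve to $W'$ composes with $g$ to give a curve in $W$. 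For the reverse, given $h\colon C\to W$ from a smooth projective curve, form $W'\times_{W}C$; by surjectivity of $g$ this has an irreducible component dominating $C$, and by taking a one-dimensional closed subscheme of it mapping onto $C$ and normalising, one obtains a smooth projective curve $\tilde{C}$ together with a finite surjection $\pi\colon \tilde{C}\to C$ and a morphism $\tilde{h}\colon \tilde{C}\to W'$ with $g\circ\tilde{h}=h\circ\pi$. The projection formula then yields
\[
(\deg\pi)\,\deg(h^{*}L)\;=\;\deg(\tilde{h}^{*}g^{*}L)\;\geq\;0,
\]
and since $\deg\pi>0$ we conclude $\deg h^{*}L\geq 0$, proving nefness of $L$.

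For the second assertion, a finite extension $k'/k$ gives a finite (in particular proper) surjective morphism $Y_{k'}\to Y$, so the first assertion applies directly. For a general extension $k'/k$, the forward direction is handled by spreading out. A smooth projective curve $g'\colon C'\to Y_{k'}$ together with its pullback of $E_{k'}$ is defined over a finitely generated $k$-subalgebra $A\subset k'$, giving rise to a proper flat family $\mathcal{C}\to \Spec A$ equipped with a morphism to $Y_{A}$ and a vector bundle on $\mathcal{C}$. By Hilbert's Nullstellensatz one can choose a closed point $\mathfrak{m}\in\Spec A$ whose residue field $\kappa$ is finite over $k$; upper-semicontinuity of the Harder--Narasimhan polygon in flat families of bundles on curves (combined with constancy of degree in flat families) then propagates semistability and vanishing of degree from the fibre over $\mathfrak{m}$---which reduces to the finite-extension case already handled---back to the generic fibre $C'\to Y_{k'}$. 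The reverse direction is easier: base-changing a smooth projective curve $C\to Y$ to $k'$ gives a smooth proper $k'$-curve whose components allow one to detect failure of semistability over $k$ from failure over $k'$.

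The main technical obstacle is the spreading-out step: one must arrange the family $\mathcal{C}\to\Spec A$ so that the special fibre is still a smooth projective curve (or at least can be replaced by its normalisation), and invoke the relevant semicontinuity statements for semistability and the Harder--Narasimhan filtration in families. These are standard moduli-theoretic inputs, but must be assembled carefully; an alternative, equivalent route would be to appeal directly to invariance of nefness under arbitrary field extension, which packages all of this into a single well-known statement from positivity theory.
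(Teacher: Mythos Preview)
Your argument is correct but takes a different route from the paper. The paper's proof uses the \emph{first} characterisation in Proposition~\ref{propnflat} (Nori-semistability): given a proper surjective $f\colon Z\to Y$ and a test curve $C\to Y$, one lifts to a finite cover $C'\to C$ factoring through $Z$, and then invokes the standard fact that (strong) semistability of degree~$0$ on a smooth curve descends along finite covers. For the field-extension statement, the paper simply cites the invariance of semistability under arbitrary field extensions \cite[Theorem~1.3.7]{HL}. You instead use the \emph{second} characterisation and reduce everything to the corresponding descent/invariance statements for nef line bundles on $\mathbb{P}(E)$ and $\mathbb{P}(E^{\vee})$.

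Both reductions are perfectly valid and roughly equally short in principle. Your approach has the virtue that the nef descent along proper surjections is an elementary curve-lifting argument that you spell out in full, whereas the paper's proof is terser and leans on a citation. On the other hand, for the field-extension part you unwind a spreading-out argument with semicontinuity of the Harder--Narasimhan polygon, which is more work than necessary: as you yourself note at the end, invariance of nefness under field extension is a standard black box from positivity theory, and invoking it directly (as the paper does for semistability) would bring your proof to the same length as the paper's. The spreading-out details you give are correct but could be trimmed without loss.
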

\begin{proof}
Using the characterization of numerically flat bundles via Nori-semistability we see that we are reduced to show that a vector bundle $E$ on a smooth projective curve $C$ is strongly semistable of degree $0$ if and only if $f^{*}E$ is strongly semistable for some finite map of smooth curves $f:C'\to C$. But this is a standard result in the theory of vector bundles on curves.\\
The second statement follows similarly from the invariance of semistablity with respect to arbitrary field extensions (see \cite[Theorem 1.3.7]{HL}).
\end{proof}
In particular we see that if $E$ is numerically flat, $F_{Y}^{n*}E$ is also numerically flat  for all $n\geq 0$.\\
The main goal of this section is to generalize a structure theorem for numerically flat bundles (\cite[Theorem 2.2]{DW3}) to non-projective proper schemes. As we will later study formal models over $Spf(\mathcal{O}_{\C_p})$ we will actually immediately deal with the situation of a proper scheme over $\mathcal{O}_{\C_p}/p$.
\begin{thm}\label{thm31}
Let $Y$ be a proper connected scheme over $Spec(\mathcal{O}_{\C_p}/p)$ and let $E$ be a vector bundle on $Y$.\\
Then $E\otimes \bar{\F}_{p}$ is numerically flat on $Y\times_{Spec(\mathcal{O}_{\C_p}/p)}Spec(\bar{\F}_{p})$ if and only if there exists a finite \'etale cover $f:Y'\to Y$, and an $e\geq 0$ such that $F_{Y'}^{e*}f^{*}E\cong \mathcal{O}_{Y'}^{r}$.
\end{thm}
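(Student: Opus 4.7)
The \emph{``if''} direction is direct: restricting a trivialization $F_{Y'}^{e*}f^*E \cong \mathcal{O}_{Y'}^r$ to the special fiber gives a trivialization on $Y_0' := Y' \times_{\mathcal{O}_{\C_p}/p} \bar{\F}_p$. The composite $Y_0' \to Y_0$ is proper surjective (finite \'etale followed by Frobenius), and trivial bundles are numerically flat, so Lemma~\ref{lem41} transfers numerical flatness to $E_0 := E\otimes \bar{\F}_p$.

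For the \emph{``only if''} direction, assume $E_0$ is numerically flat on $Y_0$. I would proceed in two stages: first establish the trivialization on $Y_0$ over $\bar{\F}_p$, then lift it to $Y$. Stage one begins with Chow's lemma, producing a projective surjective $\pi : Z_0 \to Y_0$ with $Z_0$ projective over $\bar{\F}_p$. By Lemma~\ref{lem41}, $\pi^*E_0$ is numerically flat on $Z_0$, and the projective case \cite[Theorem~2.2]{DW3} of Deninger--Werner gives a finite \'etale cover $g : Z_0' \to Z_0$ and $e\geq 0$ with $F^{e*} g^* \pi^* E_0 \cong \mathcal{O}_{Z_0'}^r$. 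Set $h := \pi\circ g$. To descend this trivialization along the proper surjective (not finite \'etale) $h$, I would pass to perfections: Frobenius is invertible on $Y_0^{\mathrm{perf}}$, so $E_0^{\mathrm{perf}}$ becomes trivial on $(Z_0')^{\mathrm{perf}}$, and $h^{\mathrm{perf}}$ is a $v$-cover. Applying $v$-descent for vector bundles on perfect schemes \cite{BS}, together with the rigidity from numerical flatness that forces the gluing cocycle to be of finite \'etale type, should refine the $v$-local trivialization to a finite \'etale trivialization of $E_0^{\mathrm{perf}}$; unwinding perfections then produces a finite \'etale cover $f_0 : Y_0' \to Y_0$ and $e_0\geq 0$ with $F^{e_0*} f_0^* E_0 \cong \mathcal{O}_{Y_0'}^r$.

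For stage two, topological invariance of the \'etale site (applicable since $Y_0\hookrightarrow Y$ is a nil thickening, hence a universal homeomorphism) lifts $f_0$ uniquely to a finite \'etale cover $f : Y' \to Y$, and it remains to extend the trivialization of $F^{e_0*} f^*E$ from $Y_0'$ to $Y'$. The defining ideal of this thickening is $\mathfrak{m}_0\mathcal{O}_{Y'}$, where $\mathfrak{m}_0\subset \mathcal{O}_{\C_p}/p$ is nil but satisfies $\mathfrak{m}_0^n = \mathfrak{m}_0$ for all $n$ (the value group of $\C_p$ being densely ordered), so standard infinitesimal deformation is unavailable. However, Frobenius raises $p^{\alpha}\mapsto p^{p\alpha}$ on the base, killing $p^{\alpha}$ as soon as $p^e\alpha \geq 1$, so further Frobenius pullbacks on $Y$ combined with additional finite \'etale refinements should inductively eliminate the cohomological obstructions to lifting the trivialization — in the spirit of the successive approximation used in the proof of Theorem~\ref{tf}. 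The principal technical obstacle is the $v$-descent step in stage one, namely refining a $v$-local trivialization on a perfect scheme to a genuine finite \'etale one: this is precisely what distinguishes the non-projective case from \cite{DW3}, and is where the rigidity of numerically flat bundles together with \cite{BS} must be exploited.
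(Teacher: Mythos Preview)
Your proposal has a genuine gap at the heart of both stages, and the paper's proof takes a rather different route that sidesteps these difficulties. The missing idea is a preliminary descent: the pair $(Y,E)$ is finitely presented over $\mathcal{O}_{\C_p}/p$, so it descends to $(Y',E')$ over $\mathcal{O}_K/p$ for some finite extension $K/\Q_p$. This single move resolves both of your obstacles at once. First, $\mathcal{O}_K/p$ is Artinian with \emph{nilpotent} maximal ideal and residue field $\F_q$, so your Stage~2 lifting problem (where you correctly observed that $\mathfrak{m}_0^n = \mathfrak{m}_0$ over $\mathcal{O}_{\C_p}/p$) simply evaporates: lifts of a fixed bundle across a genuine nilpotent thickening over $\F_q$ are parametrized by a finite set. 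Second, and more importantly, working over $\F_q$ gives finiteness of isomorphism classes of numerically flat bundles (Proposition~\ref{prop32}) and finiteness of $\mathrm{Hom}$-spaces, which is what drives the whole argument.

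With this in hand, the paper does \emph{not} try to descend a trivialization through a $v$-cover, as you attempt in Stage~1. Your claim that ``rigidity from numerical flatness forces the gluing cocycle to be of finite \'etale type'' is not substantiated, and I do not see how to make it precise: the descent datum for $E_0^{\mathrm{perf}}$ along $(Z_0')^{\mathrm{perf}} \to Y_0^{\mathrm{perf}}$ is a genuine $GL_r$-valued cocycle with no a priori reason to factor through a finite quotient. Instead, after applying Chow's lemma to get $f:Z\to Y'$ projective over $\mathcal{O}_K/p$, the paper observes that the set of isomorphism classes of descent data $(G,\phi)$ along $f$ with $G$ numerically flat of rank $r$ is \emph{finite} (finitely many $G$'s, and $\phi$ lives in a finite $\F_q$-vector space). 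Frobenius acts on this finite set, so pigeonhole gives an isomorphism of descent data $F_Z^{r*}(f^*E',\phi_{\mathrm{can}}) \cong F_Z^{s*}(f^*E',\phi_{\mathrm{can}})$ for some $r>s$. It is \emph{this isomorphism} (not a trivialization) that gets descended via $v$-descent on perfections \cite{BS}; de-perfecting yields $F_{Y'}^{n+r*}E' \cong F_{Y'}^{n+s*}E'$ on $Y'$ itself for $n\gg 0$, and then the Lange--Stuhler/Katz theorem finishes. So the role of $v$-descent is much more modest than in your outline: it descends an isomorphism between two bundles already known to live on $Y'$, rather than manufacturing a finite \'etale cover out of a proper one.
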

The proof for projective schemes over $\bar{\mathbb{F}}_{p}$ in \cite{DW3} relies on Langer's boundedness theorem for semistable sheaves (see \cite{La1}) . Using these results Deninger-Werner show the following
\begin{prop}\cite[Theorem 2.4]{DW3}\label{prop32}
Let $Y$ be a projective connected scheme over $\mathbb{F}_{q}$. Then the set of isomorphism classes of numerically flat vector bundles of fixed rank $r$ on $Y$ is finite.
\end{prop}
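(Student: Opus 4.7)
The approach is to combine Langer's boundedness theorem for semistable sheaves in positive characteristic with the elementary fact that a scheme of finite type over a finite field has only finitely many rational points.

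First I would verify that all rank $r$ numerically flat bundles on $Y$ share the same Hilbert polynomial. Using the characterization of Proposition \ref{propnflat} via nefness of $\mathcal{O}_{\mathbb{P}(E)}(1)$ and $\mathcal{O}_{\mathbb{P}(E^{\vee})}(1)$ (equivalently, Nori-semistability), standard arguments for nef vector bundles show that every $c_i(E)$ vanishes in numerical equivalence. Consequently, after fixing an ample line bundle on $Y$, the Hilbert polynomial of any rank $r$ numerically flat $E$ equals $r \cdot \chi(\mathcal{O}_Y(n))$ and in particular depends only on $r$.

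Next I would invoke Langer's fundamental boundedness theorem (\cite{La1}): on a projective scheme over a field, the family of slope-semistable torsion-free sheaves with fixed Hilbert polynomial is bounded. Applied to our setting, the rank $r$ numerically flat bundles on $Y$ form a bounded family. Concretely, one can realize this family as parameterized by a quasi-projective scheme $Q$ of finite type over $\mathbb{F}_q$ — an open semistable locus inside a suitable Quot scheme — together with a universal flat family $\mathcal{U}$ on $Q \times_{\mathbb{F}_q} Y$, such that every rank $r$ numerically flat bundle on $Y$ is isomorphic to $\mathcal{U}_t$ for some $t \in Q(\mathbb{F}_q)$. Finiteness of the set of isomorphism classes then follows immediately, because a scheme of finite type over the finite field $\mathbb{F}_q$ has only finitely many $\mathbb{F}_q$-points: covering $Q$ by finitely many $\mathrm{Spec}(A_i)$ with $A_i$ a finitely generated $\mathbb{F}_q$-algebra, each $\mathrm{Hom}_{\mathbb{F}_q}(A_i, \mathbb{F}_q)$ is finite.

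The main technical obstacle is checking that the parameterizing scheme genuinely lives over $\mathbb{F}_q$ in such a way that an $\mathbb{F}_q$-rational numerically flat bundle on $Y$ corresponds to an $\mathbb{F}_q$-point of $Q$, rather than only to a point over an extension. This is resolved by the fact that the Quot functor is defined over $\mathbb{F}_q$ and commutes with base change, so that any presentation of an $\mathbb{F}_q$-bundle as a quotient of a suitable twist yields an honest $\mathbb{F}_q$-point of the Quot scheme; standard GIT-type arguments then confine this point to the semistable locus $Q$. The only other delicate input is the numerical vanishing of higher Chern classes of numerically flat bundles in characteristic $p$, which is what allows Langer's boundedness to be applied with a single fixed Hilbert polynomial rather than separately for each admissible one.
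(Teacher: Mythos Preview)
Your proposal is correct and follows essentially the same approach indicated in the paper: the paper does not prove this proposition but cites it from \cite{DW3}, explicitly noting that the proof there rests on Langer's boundedness theorem \cite{La1}, which combined with the finiteness of $\mathbb{F}_q$-points on a finite type scheme gives the result. Your sketch reproduces exactly this strategy.
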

From this it follows that for any numerically flat bundle $E$ there exist numbers $r>s\geq 0$, such that $F_{Y}^{r*}E\cong F_{Y}^{s*}E$. One then concludes with the following
\begin{thm}\cite[Satz 1.4]{LS}\cite[Proposition 4.1]{Ka}
Let $Z$ be any $\F_{p}$-scheme and $G$ a vector bundle on $Z$ for which there exists an isomorphism $F_{Z}^{n*}G\cong G$ for some $n>0$. Then there is a finite \'etale cover of $Z$ on which $G$ becomes trivial.
\end{thm}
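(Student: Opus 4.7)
The plan is to realize the required finite \'etale cover as a torsor under the finite group $GL_{r}(\F_{p^{n}})$ of ``$\phi$-invariant frames'' of $G$. Let $P=\underline{\mathrm{Isom}}_{Z}(\mathcal{O}_{Z}^{r}, G)$ denote the frame bundle of $G$, a right $GL_{r}$-torsor over $Z$, and observe that the given isomorphism $\phi\colon F_{Z}^{n*}G \xrightarrow{\sim} G$ induces a $GL_{r}$-equivariant isomorphism $F_{Z}^{n*}P\cong P$ over $Z$. I would then define $T\to Z$ to be the $Z$-scheme representing the functor
\begin{equation*}
T(U)=\{\,s\in P(U) : \phi\circ F^{n*}(s)=s\,\}.
\end{equation*}
The goal is to show $T\to Z$ is finite \'etale and surjective; the tautological $\phi$-invariant frame on $T$ then supplies an isomorphism $G|_{T}\cong \mathcal{O}_{T}^{r}$, which is what the theorem asks for.

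To analyze $T$ I would work locally: on an open $U\subset Z$ where $G$ admits a frame $s_{0}$, the isomorphism $\phi$ is encoded by a matrix $A\in GL_{r}(\mathcal{O}_{U})$ via $\phi\circ F^{n*}(s_{0})=s_{0}\cdot A$, and any section $s=s_{0}\cdot M$ with $M\in GL_{r}(\mathcal{O}_{U})$ lies in $T(U)$ iff $F^{n}(M)=A^{-1}M$, where $F^{n}$ raises matrix entries to their $p^{n}$-th power. This system of polynomial equations cuts out a closed subscheme $T|_{U}\hookrightarrow GL_{r,U}$. Because $dF^{n}=0$ in characteristic $p$, the Jacobian of $F^{n}(M)-A^{-1}M$ with respect to the entries of $M$ is simply $-A^{-1}$, which is invertible; the Jacobian criterion then yields \'etaleness of $T|_{U}\to U$. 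The same equations $m_{ij}^{p^{n}}=(A^{-1}M)_{ij}$ exhibit each coordinate of $M$ as integral over $\mathcal{O}_{U}$, so $T|_{U}\to U$ is in fact finite.

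Next I would identify the torsor structure. The right action of $GL_{r}(\F_{p^{n}})$ on $M$ by $M\mapsto M\cdot g$ preserves the defining equation since $F^{n}(g)=g$, and a standard application of Lang's theorem to the connected algebraic group $GL_{r}/\F_{p^{n}}$ shows that on any geometric fibre the action is simply transitive with exactly $|GL_{r}(\F_{p^{n}})|$ elements. Combined with \'etaleness, this makes $T|_{U}\to U$ a $GL_{r}(\F_{p^{n}})$-torsor, and since the construction of $T$ is intrinsic it glues to a global finite \'etale torsor $T\to Z$; in particular $T\to Z$ is surjective. The hard part, in my view, is the sheer generality ``any $\F_{p}$-scheme'': we have no smoothness, noetherianness, or even reducedness to lean on, so the whole argument must rest on the infinitesimal degeneracy $dF=0$ to get \'etaleness uniformly, and on the fact that the equation $F^{n}(M)=A^{-1}M$ becomes integral in each entry to get finiteness without any finite-type hypothesis. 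Once these local properties are in place, gluing and the tautological trivialization finish the proof.
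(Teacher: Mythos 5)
The paper does not prove this theorem; it is quoted as a known result of Lange--Stuhler and Katz, so there is no in-paper argument to compare you against. Your proof is correct, and it is in substance the classical Lange--Stuhler argument: pass to the frame bundle, observe that the $\phi$-fixed frames are cut out, Zariski-locally on $Z$, by the Lang-type equation $F^{n}(M)=A^{-1}M$ inside $GL_{r}$, use $dF^{n}=0$ to obtain an invertible Jacobian (hence \'etaleness without any finiteness or regularity hypotheses on $Z$), use the $p^{n}$-power shape of the equations for module-finiteness, and use Lang's theorem over $\overline{\F}_{p}$ to see that every geometric fibre is non-empty and is a simply transitive $GL_{r}(\F_{p^{n}})$-set; the tautological frame on the resulting finite \'etale $GL_{r}(\F_{p^{n}})$-torsor then trivializes $G$.

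Two details are worth tightening. First, the Jacobian of the system $\{m_{ij}^{p^{n}}-(A^{-1}M)_{ij}\}$ with respect to the $r^{2}$ coordinates is the $r^{2}\times r^{2}$ matrix $-A^{-1}\otimes I_{r}$, not $-A^{-1}$; of course this is still invertible, so the \'etaleness conclusion stands. Second, the finiteness step is slightly too quick as written: the equations do make $R[m_{ij}]/I$ module-finite over $R$ (monomials with all exponents $<p^{n}$ span, by downward induction on total degree), but your $T|_{U}$ sits inside $GL_{r,U}$, i.e.\ after inverting $\det M$, and a localization of a finite module need not stay finite. The needed extra observation is that in $R[m_{ij}]/I$ one has $(\det M)^{p^{n}}=\det(A^{-1})\cdot\det M$, so $\det M\cdot\bigl((\det M)^{p^{n}-1}-\det(A^{-1})\bigr)=0$ with the two factors comaximal (their difference is the unit $\det(A^{-1})$); hence the locus where $\det M$ is a unit is a clopen subscheme of the finite scheme $\mathrm{Spec}(R[m_{ij}]/I)$ and is therefore itself finite over $U$. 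With those two clarifications the argument is complete and, since the construction of $T$ is functorial in $U$, it glues over an arbitrary $\F_{p}$-scheme $Z$ exactly as you say.
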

As the author knows of no way to bound vector bundles on non-projective schemes, we are not able to show finiteness of isomorphism classes as in Proposition \ref{prop32}.\\
The proof of Theorem \ref{thm31} will instead be an application of $v$-descent for perfect schemes as established in \cite{BS}. For this we will briefly recall the necessary ingredients.
\begin{defn}
An $\F_{p}$-scheme $Z$ is called perfect if $F_{Z}$ is an automorphism. The category of perfect $\F_{p}$-schemes will be denoted by $Perf$.
\end{defn}
\begin{rmk}
\begin{itemize}
\item As explained in \cite[\S 3]{BS} there is a functor $Z\mapsto Z_{perf}$ from $\F_{p}$-schemes to the category of perfect schemes, where $Z_{perf}=\varprojlim_{F_{Z}}Z$ denotes the inverse limit along the absolute Frobenius morphism.
\item The topology on $Perf$ studied in loc. cit. is the so called $v$-topology, which is a non-noetherian version of the $h$-topoogy. For its definition we refer to \cite[\S 2]{BS}. The only thing we need here, is that for any proper surjective cover $f:Z'\to Z$ of $\F_{p}$-schemes, $f_{perf}$ is a $v$-cover.
\end{itemize}
\end{rmk}
The main result for us is then:
\begin{thm}\cite[Theorem 4.1]{BS}\label{thmbs}
Let $Vect_{r}(-)$ denote the groupoid of vector bundles of rank $r$. The association $Z\mapsto Vect_{n}(Z)$ is a $v$-stack on $Perf$.
\end{thm}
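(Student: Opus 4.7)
The claim has two parts to verify: that the isomorphism presheaf between any two rank-$r$ vector bundles on a base $Z$ is a $v$-sheaf, and that $v$-descent data for rank-$r$ vector bundles are effective. Both conditions are local on the base in the Zariski topology (which is coarser than the $v$-topology), so I would first reduce to affine perfect schemes: it suffices to test both axioms against $v$-covers of the form $\mathrm{Spec}(B)\to \mathrm{Spec}(A)$ with $A, B$ perfect, checking that a descent datum on $B$ extends to a vector bundle on $A$ and that $\mathrm{Isom}$ between two bundles on $A$ is controlled by compatible data on $B$.

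The core strategy is to bridge $v$-descent to classical faithfully flat descent. The technical input I would use is a structural refinement: any $v$-cover between perfect affines can be refined by a composition of a faithfully flat cover and a cover assembled from perfections of absolutely integrally closed valuation rings. This rests on the characterization of $v$-covers of qcqs schemes as universally subtrusive morphisms via a valuative criterion, combined with the pleasant behavior of perfection on valuation rings in characteristic $p$: the perfection of a valuation ring is again a valuation ring, and absolute integral closures remain so after perfection.

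Given such a refinement, the verification splits into two cases. On the faithfully flat piece, Grothendieck's fpqc descent for finite locally free modules supplies both effectivity and the $\mathrm{Isom}$-sheaf property directly. For the valuation-ring piece, if $V$ is perfect and absolutely integrally closed, every finitely generated projective $V$-module is free, so the groupoid $\mathrm{Vect}_r(\mathrm{Spec}(V))$ is described by $\mathrm{GL}_r(V)$-torsors, and the obstruction to effectivity sits in a non-abelian cohomology set that vanishes by the absolute integral closure of $V$. Patching the two halves of the refinement yields effectivity of general $v$-descent data; the $\mathrm{Isom}$-sheaf condition follows by applying effectivity to a block-diagonal descent datum encoding a pair of bundles with an isomorphism over the cover.

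The principal obstacle is the structural refinement step: precisely establishing that an arbitrary $v$-cover of perfect affines admits a refinement of the indicated form. This requires a careful deployment of the valuative criterion for universally subtrusive morphisms together with a Zorn-type amalgamation of valuation-ring-valued points of the base into a cover of the source that dominates the original $v$-cover, and uses crucially that in characteristic $p$ the operations of perfection, localization, and absolute integral closure of valuation rings commute in the relevant sense. Once this refinement is available, both stack axioms reduce to combinations of classical fpqc descent and elementary facts about projective modules over valuation rings.
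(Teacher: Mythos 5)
The paper does not prove this statement: it is cited verbatim from Bhatt--Scholze, \cite[Theorem 4.1]{BS}, and used as a black box in the proof of Theorem \ref{thm31}. There is accordingly no internal argument of the paper to compare against.

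Measured against the actual proof in \cite{BS}, your sketch has the right general shape (reduce to affines; leverage fpqc descent for the flat part; exploit absolutely integrally closed perfect valuation rings for the rest), but the two claims you lean on are imprecise in ways that matter. The structural refinement you announce --- that any $v$-cover of perfect affines refines through a faithfully flat cover followed by a cover built from perfections of AIC valuation rings --- is not what Bhatt--Scholze establish, and I do not believe it is true as stated. Their reduction replaces the \emph{base}, not the cover: every perfect ring $R$ admits a faithfully flat perfect $R$-algebra $S$ with $\operatorname{Spec}(S)$ $w$-contractible, so that all local rings of $S$ are AIC perfect valuation rings, and extremal disconnectedness of $\operatorname{Spec}(S)$ allows passage to stalks. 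The $v$-cover itself acquires no particular structure; the descent problem is transported along $R\to S$ by classical fpqc descent and then treated stalkwise over $S$.

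More seriously, you say the obstruction to effectivity over an AIC perfect valuation ring $V$ ``sits in a non-abelian cohomology set that vanishes by the absolute integral closure of $V$.'' That vanishing \emph{is} the theorem in the case that remains; it is not a consequence of finite projective $V$-modules being free. Showing that a $\operatorname{GL}_r$-valued cocycle for an arbitrary $v$-cover of $\operatorname{Spec}(V)$ is a coboundary is an explicit argument using that $\operatorname{Spec}(V)$ is a chain under specialization and that $V$ is AIC, and it is the non-formal core of \cite[Theorem 4.1]{BS}; as written your proposal restates the goal rather than proving it. A smaller point: the Isom-presheaf axiom comes for free from the $v$-sheafness of $\mathcal{O}$ on $\mathrm{Perf}$ (since $\operatorname{Hom}$ between finite projective modules is itself a finite projective module), and is established first in \cite{BS}; deriving it from effectivity through a block-diagonal device is an unnecessary and potentially circular detour.
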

We are now ready to prove the main result of this section:
\begin{proof}[Proof of Theorem \ref{thm31}]
Let $Y$ be proper connected over $\mathcal{O}_{\C_p}/p$ and $E$ a vector bundle on $Y$. By standard descent results for finitely presented modules, we can assume that $Y$ and $E$ descend to $(Y', E')$ over $\mathcal{O}_{K}/p$ for some finite extension $K/\Q_{p}$. Let $\pi\in \mathcal{O}_{K}$ be a uniformizer and $\mathcal{O}_{K}/\pi=\F_{q}$. Assume first that $Y'$ is projective. Then the following argument is essentially already contained in the proof of Theorem 2.2 in \cite{DW3}:\\
By Proposition \ref{prop32} there are only finitely many numerically flat bundles on $Y'\times Spec(\F_{q})$ up to isomorphism. But $Y'$ is an infinitesimal thickening of $Y'\times Spec(\F_{q})$ (as $\mathcal{O}_{K}/p$ is an Artin ring). But then the lifts of a fixed vector bundle $G$ on $Y'\times Spec(\F_{q})$ to $Y'$ are parametrized by a finite dimensional vector space over $\F_{q}$. This means that there are only finitely many vector bundles of rank $r$ on $Y'$ whose reduction mod $\pi$ is numerically flat. As $F_{Y'}^{n*}E'$ lies in this set for all $n\geq 0$, we find some natural numbers $r>s\geq 0$ such that $F_{Y'}^{r*}E'\cong F_{Y'}^{s*}E'$.\\
Now assume that $Y'$ is proper but not projective. By Chow's lemma we can find a proper surjective cover $f:Z\to Y'$ where $Z$ is projective over $\mathcal{O}_{K}/p$. By Lemma \ref{lem41} $f^{*}E'$ is a numerically flat vector bundle. We have the canonical gluing datum $\phi_{can}:pr_{1}^{*}(f^{*}E')\to pr_{2}^{*}(f^{*}E')$ where $pr_{1}, pr_{2}:Z\times_{Y'}Z\to Z$ denote the canonical projections.
\begin{claim}
The set $M=\{$ descent data $(G, \phi)$ wrt $f$ where $G$ is numerically flat of $rk$ $r$ $\}/Iso$ is finite.
\end{claim}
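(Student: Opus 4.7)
The plan is to combine the projective case of Theorem \ref{thm31} (essentially Proposition \ref{prop32} plus artinian lifting) with $v$-descent along $f_{\mathrm{perf}}$ to bound both the underlying bundle and the gluing datum of any element of $M$.

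First I would handle the underlying bundle exactly as in the projective case: Proposition \ref{prop32} applied to the projective connected $\F_q$-scheme $Z\times_{\mathcal{O}_K/p}\mathrm{Spec}(\F_q)$ yields finitely many numerically flat rank-$r$ bundles there, and since $\mathcal{O}_K/p$ is artin local with residue field $\F_q$, lifts to $Z$ are parametrized by a finite-dimensional $\F_q$-vector space. Hence there are only finitely many isomorphism classes $G_1,\ldots,G_N$ of rank-$r$ bundles on $Z$ whose reduction mod $\pi$ is numerically flat, and any $(G,\phi)\in M$ has underlying bundle isomorphic to some $G_i$.

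Next I would pass to perfections and invoke $v$-descent. Since $f$ is proper surjective, $f_{\mathrm{perf}}\colon Z_{\mathrm{perf}}\to Y'_{\mathrm{perf}}$ is a $v$-cover, so Theorem \ref{thmbs} identifies isomorphism classes of rank-$r$ vector bundles on $Y'_{\mathrm{perf}}$ with isomorphism classes of descent data along $f_{\mathrm{perf}}$. Perfection therefore defines a map $\alpha\colon M\to \mathrm{Vect}_r(Y'_{\mathrm{perf}})/\mathrm{iso}$ whose image lies in the subset $I$ of those $\tilde G$ with $f_{\mathrm{perf}}^{*}\tilde G\cong (G_i)_{\mathrm{perf}}$ for some $i$. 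To prove the claim it then suffices to show $\alpha$ has finite fibers and that $I$ is finite.

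The main obstacle is the finiteness of $I$: classes of $\tilde G$ pulling back to a fixed $(G_i)_{\mathrm{perf}}$ are controlled by a non-abelian $v$-cohomology of $\underline{\mathrm{Aut}}((G_i)_{\mathrm{perf}})$, which is not obviously finite. I expect to control it using that $\mathcal{E}nd(G_i)$ is itself numerically flat on $Z$, so its global sections form a finite-dimensional $\F_q$-algebra, combined with the projective-case finiteness applied to endomorphism and tensor bundles of the $G_i$. The finite-fiber property of $\alpha$ should follow more easily: $\mathrm{Hom}_{Z_{\mathrm{perf}}}((G_i)_{\mathrm{perf}},(G_j)_{\mathrm{perf}})=\mathrm{colim}_n\mathrm{Hom}_Z(F^{n*}G_i,F^{n*}G_j)$, and since Frobenius pullbacks of the $G_i$ remain within $\{G_1,\ldots,G_N\}$ up to isomorphism by Lemma \ref{lem41}, any isomorphism of perfected descent data is already witnessed at some finite Frobenius level, hence realized in $M$ after absorbing the Frobenius twist into the underlying bundle.
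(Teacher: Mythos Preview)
You have overcomplicated the argument and in doing so left a genuine gap. The paper's proof of the Claim is a two-line observation that does not use $v$-descent at all: once you know (as you correctly argue in your first paragraph) that there are only finitely many isomorphism classes $G_1,\dots,G_N$ of numerically flat rank-$r$ bundles on $Z$, you are done, because for each fixed $G_i$ the datum $\phi$ is an element of
\[
\mathrm{Hom}_{Z\times_{Y'}Z}\bigl(pr_1^{*}G_i,\,pr_2^{*}G_i\bigr),
\]
and this is a finite-dimensional $\F_q$-vector space. Indeed $f$ is proper, so $Z\times_{Y'}Z$ is proper over $Z$ and hence proper over $\mathrm{Spec}(\mathcal{O}_K/p)$; global sections of the coherent sheaf $\mathcal{H}om(pr_1^{*}G_i,pr_2^{*}G_i)$ therefore form a finite $\mathcal{O}_K/p$-module, which is a finite set since $\mathcal{O}_K/p$ is a finite ring. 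Thus $M$ is covered by finitely many finite sets.

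Your detour through perfections and $v$-descent confuses the logical structure of the surrounding proof of Theorem~\ref{thm31}: there, $v$-descent is invoked \emph{after} the Claim, to descend the isomorphism $\Psi$ of descent data that the Claim produces. It is not needed to establish the Claim itself. Moreover, your route leaves an acknowledged hole --- the finiteness of the image set $I\subset \mathrm{Vect}_r(Y'_{\mathrm{perf}})/\mathrm{iso}$ --- which would amount to controlling a non-abelian $v$-$H^1$, a strictly harder problem than the one you are trying to solve. The elementary bound on $\phi$ bypasses all of this.
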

The claim follows from the fact that $G$ runs through finitely many isomorphism classes and $\phi$ lies in the finite $\F_{q}$-vector space $Hom_{Z\times_{Y'}Z}(pr_{1}^{*}G, pr_{2}^{*}G)$.\\
As Frobenius commutes with all maps, $F_{Z}$ acts on $M$. Hence we get an isomorphism 
\begin{center}
$\Psi:F_{Z}^{r*}(f^{*}E', \phi_{can})\cong F_{Z}^{s*}(f^{*}E', \phi_{can})$
\end{center}
for some natural numbers $r>s\geq0$. Now of course $\Psi$ will in general not descend to an isomorphism between $F_{Y'}^{r*}E'$ and $F_{Y'}^{s*}E'$. But by Theorem \ref{thmbs}, after passing to the perfection, we see that $f_{perf}:Z_{perf}\to Y'_{perf}$ satisfies effective descent for vector bundles. Hence $\pi_{Z}^{*}(\Psi)$ descends to an isomorphism $\pi_{Y'}^{*}(F_{Y'}^{r*}E')\cong \pi_{Y'}^{*}(F_{Y'}^{r*}E')$ where $\pi_{Z}:Z_{perf}\to Z$ and $\pi_{Y'}:Y'_{perf}\to Y'$ denote the canonical projections.\\
But the category of (descent data of) vector bundles on $Z_{perf}$ is the colimit of the categories of (descent data of) vector bundles on copies of $Z$ along Frobenius pullbacks. But then we see that $\Psi$ already becomes effective after a high enough Frobenius pullback. This gives an isomorphism $F_{Y'}^{n*}F_{Y'}^{r*}E'\cong F_{Y'}^{n*}F_{Y'}^{s*}E'$ for some $n>>0$.
\end{proof}
\subsection{The Deninger-Werner correspondence}
In this section we will prove our main result, which is the construction of $p$-adic representations attached to vector bundles with numerically flat reduction on an arbitrary proper (seminormal) rigid analytic variety $X$. Moreover we will later show that our representations coincide with the ones constructed in \cite{DW3} whenever $X$ is the analytification of a smooth algebraic variety over $\overline{\Q}_{p}$.
\\
We will treat the integral and rational case simultaneously. So let $\mathcal{X}$ be a proper connected admissible formal scheme over $Spf(\mathcal{O}_{\C_{p}})$ with generic fiber $X$.
\begin{defn}
Define $\mathcal{B}^{s}(\mathcal{X})$ to be the category of vector bundles $\mathcal{E}$ on $\mathcal{X}$ for which $\mathcal{E}\otimes\overline{\F}_{p}$ is a numerically flat vector bundle.\\
Similarly, we define $\mathcal{B}^{s}(X)$ to be the category of vector bundles on $X$ for which there exists an integral formal model with numerically flat reduction.
\end{defn}
We remark the following (compare with \cite[\S 9]{DW3}):
\begin{lem}
The categories $\mathcal{B}^{s}(X)$ and $\mathcal{B}^{s}(\mathcal{X})$ are closed under tensor products, extensions, duals, internal homs and exterior powers.
\end{lem}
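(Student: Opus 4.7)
The plan is to pass to the special fiber and reduce both statements to the closure of numerically flat vector bundles on a proper connected $\overline{\mathbb{F}}_p$-scheme under the five operations. First I would treat $\mathcal{B}^{s}(\mathcal{X})$: all five operations preserve local freeness (for extensions this follows from the fact that short exact sequences of vector bundles locally split), and each commutes with the reduction functor $\mathcal{E}\mapsto \mathcal{E}\otimes_{\mathcal{O}_{\mathcal{X}}}\overline{\mathbb{F}}_p$. So the question reduces to the analogous closure statement for the category of numerically flat vector bundles on the proper connected $\overline{\mathbb{F}}_p$-scheme $\mathcal{X}\otimes \overline{\mathbb{F}}_p$.

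For that reduction I would invoke the Nori-semistability characterization of Proposition~\ref{propnflat}: $E$ is numerically flat iff $f^{*}E$ is semistable of degree zero for every morphism $f:C\to Y$ from a smooth projective curve. Since all five operations commute with pullback, the problem reduces to the analogous closure statement on a smooth projective curve in characteristic $p$. There the bundles in question are strongly semistable of degree zero; closure under tensor products is Ramanan--Ramanathan, closure under duals, internal homs and exterior powers is a formal consequence, and closure under extensions follows from the standard slope argument (any subbundle $F\subset E$ satisfies $\mu(F\cap E')\le 0$ and $\mu(F/F\cap E')\le 0$, hence $\mu(F)\le 0=\mu(E)$) together with the exactness of Frobenius pullback.

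For $\mathcal{B}^{s}(X)$ the first four operations reduce to the formal case via a common model: given $E_1,E_2\in \mathcal{B}^{s}(X)$ with models $\mathcal{E}_i$ on $\mathcal{X}_i$, Raynaud's theory produces a common admissible formal model $\mathcal{X}$ dominating both via admissible blow-ups, and by Lemma~\ref{lem41} the pullbacks of $\mathcal{E}_1,\mathcal{E}_2$ to $\mathcal{X}$ still have numerically flat reduction. Performing the operation on $\mathcal{X}$ and invoking the case of $\mathcal{B}^{s}(\mathcal{X})$ gives a model of the desired bundle on $X$. The main obstacle will be extensions on the rigid side: given $0\to E'\to E\to E''\to 0$ with $E',E''\in \mathcal{B}^{s}(X)$ and models $\mathcal{E}',\mathcal{E}''$ on a common $\mathcal{X}$, one needs to exhibit a formal extension $\mathcal{E}$ of $\mathcal{E}''$ by $\mathcal{E}'$ whose generic fiber realizes $E$. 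I would proceed via formal--rigid GAGA: the class $[E]\in \mathrm{Ext}^{1}_{X}(E'',E')=H^{1}(X,(E'')^{\vee}\otimes E')$ is identified with an element of $H^{1}(\mathcal{X},(\mathcal{E}'')^{\vee}\otimes \mathcal{E}')\otimes_{\mathcal{O}_{\C_p}}\C_p$, so after multiplying by a suitable $p^{n}$ the class lifts to $\mathrm{Ext}^{1}_{\mathcal{X}}(\mathcal{E}'',\mathcal{E}')$; since $p^{n}\in \C_p^{\times}$ acts as an automorphism on $E''$, the resulting formal extension has middle term isomorphic to $E$ as a vector bundle. Its reduction is then an extension of numerically flat bundles on the special fiber, hence lies in $\mathcal{B}^{s}(\mathcal{X})$ by the already-treated extension closure on the special fiber.
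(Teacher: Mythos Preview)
Your proposal is correct and follows essentially the same outline as the paper: reduce $\mathcal{B}^{s}(\mathcal{X})$ to the well-known closure properties of numerically flat bundles on the special fiber, and handle $\mathcal{B}^{s}(X)$ by using Raynaud's theory to pass to a common formal model. The paper's proof is terser---it simply cites the closure properties as ``well known'' (or alternatively as a consequence of Theorem~\ref{thm31}) and for $\mathcal{B}^{s}(X)$ only records the existence of a common dominating model without spelling out the extension case---whereas you supply the details: the reduction to curves via Nori-semistability, and the formal GAGA argument lifting the extension class from $\mathrm{Ext}^{1}_{X}$ to $\mathrm{Ext}^{1}_{\mathcal{X}}$ after clearing a power of $p$. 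This last step is genuinely needed and is left implicit in the paper, so your elaboration is a useful clarification rather than a different route.
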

\begin{proof}
The claim for $\mathcal{B}^{s}(\mathcal{X})$ follows from the analogous statement for numerically flat vector bundles on a proper scheme, which is well known (alternatively it can be deduced from Theorem \ref{thm31}).\\
For $\mathcal{B}^{s}(X)$ we just note that by the fundamental results of Raynaud, for any two formal models $\mathcal{X}, \mathcal{Y}$ of $X$, there exists an admissible blowup $\mathcal{X}'\to \mathcal{X}$, together with a morphism $\mathcal{X}'\to \mathcal{Y}$ (which can actually also be assumed to be an admissible blowup), which also induces an isomorphism on the generic fiber.\\
Using this the claim follows.
\end{proof}
Recall that we have a canonical projection $\mu:X_{pro\acute{e}t}\to \mathcal{X}_{Zar}$. For any $\mathcal{E}\in Vect(\mathcal{X})$ we again denote by $\mathcal{E}^{+}:=\mu^{-1}\mathcal{E}\otimes_{\mu^{-1}\mathcal{O}_{\mathcal{X}}}\mathcal{O}^{+}_{X}$ its pullback to the pro-\'etale site.
\begin{prop}\label{prop41}
For any $\mathcal{E}\in \mathcal{B}^{s}(\mathcal{X})$, the pullback to the pro-\'etale site $\mathcal{E}^{+}$ is contained in $\mathcal{B}^{p\acute{e}t}(\mathcal{O}_{X}^{+})$.
\end{prop}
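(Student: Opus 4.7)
The plan is to reduce to Theorem \ref{tf} via a suitable finite étale cover of $X$. Write $\mathcal{X}_0:=\mathcal{X}\otimes_{\mathcal{O}_{\C_p}}\mathcal{O}_{\C_p}/p$ and $\mathcal{E}_0:=\mathcal{E}|_{\mathcal{X}_0}$. By hypothesis, $\mathcal{E}_0\otimes\overline{\F}_p$ is numerically flat on $\mathcal{X}_0\otimes\overline{\F}_p$, so Theorem \ref{thm31} produces a finite étale surjection $g_0\colon\mathcal{Y}_0\to\mathcal{X}_0$ and an integer $e\geq 0$ such that $F^{e*}_{\mathcal{Y}_0}g_0^{*}\mathcal{E}_0\cong\mathcal{O}_{\mathcal{Y}_0}^{r}$. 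Invoking the topological invariance of the finite étale site of an admissible formal scheme over $Spf(\mathcal{O}_{\C_p})$ (finite étale covers lift uniquely from the mod-$p$ reduction), this cover lifts uniquely to a finite étale $g\colon\mathcal{Y}\to\mathcal{X}$ of admissible formal schemes; its generic fiber $h\colon Y\to X$ is a finite étale cover of rigid analytic varieties. By Lemma \ref{lem11} one has a canonical identification $h^{*}_{pro\acute{e}t}\mathcal{E}^{+}\cong(g^{*}\mathcal{E})^{+}$.

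Next I would verify that $(g^{*}\mathcal{E})^{+}/p$ is $F^{e}$-trivial on $Y_{pro\acute{e}t}$. Since the absolute Frobenius $\Phi$ on $\mathcal{O}_{Y}^{+}/p$ restricts, via the specialization $\mu\colon Y_{pro\acute{e}t}\to\mathcal{Y}_{Zar}$, to $\mu^{-1}$ applied to the absolute Frobenius $F_{\mathcal{Y}_0}$ on $\mathcal{O}_{\mathcal{Y}_0}$, the exactness of $\mu^{-1}$ combined with its compatibility with tensor products produces a canonical isomorphism
\[
\Phi^{e*}\bigl((g^{*}\mathcal{E})^{+}/p\bigr)\cong\bigl(F^{e*}_{\mathcal{Y}_0}g_0^{*}\mathcal{E}_0\bigr)^{+}\cong\bigl(\mathcal{O}_{\mathcal{Y}_0}^{r}\bigr)^{+}\cong(\mathcal{O}_{Y}^{+}/p)^{r}.
\]
Thus the hypotheses of Theorem \ref{tf} are met on $Y$ (or on each of its connected components, which are still proper since $\mathcal{Y}$ is finite over $\mathcal{X}$). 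Theorem \ref{tf} therefore supplies a profinite étale cover $\tilde{Z}\to Y$ trivializing $\widehat{(g^{*}\mathcal{E})^{+}}=\hat{\mathcal{E}}^{+}|_Y$. Since the composition of a finite étale cover with a profinite étale cover is again profinite étale, $\tilde{Z}\to X$ is a profinite étale cover trivializing $\hat{\mathcal{E}}^{+}$, so $\mathcal{E}^{+}\in\mathcal{B}^{p\acute{e}t}(\mathcal{O}_X^{+})$.

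The main technical points requiring care are (i) the topological invariance of the finite étale site of the admissible formal scheme $\mathcal{X}$ with respect to its mod-$p$ reduction—standard but to be cited precisely—and (ii) the compatibility of the absolute Frobenius on $\mathcal{O}_Y^{+}/p$ with $\mu^{-1}$ applied to $F_{\mathcal{Y}_0}$, which is exactly what converts the Frobenius trivialization delivered by Theorem \ref{thm31} into the $F^{e}$-trivialization needed to trigger Theorem \ref{tf}. Handling a possibly disconnected $Y$ is only a minor bookkeeping matter, since $Y$ is qcqs with finitely many connected components.
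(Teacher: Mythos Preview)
Your proposal is correct and follows essentially the same approach as the paper's proof: apply Theorem \ref{thm31} to obtain a finite \'etale cover modulo $p$ on which a Frobenius pullback of $\mathcal{E}/p$ is trivial, lift this cover to $\mathcal{X}$ by topological invariance, and then invoke Theorem \ref{tf}. You have simply been more explicit than the paper about the auxiliary points (the lifting step, the compatibility of Frobenius with $\mu^{-1}$, and the passage through finitely many connected components), all of which the paper leaves implicit.
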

\begin{proof}
By Theorem \ref{thm31} there exists a finite \'etale cover $f:Y_{0}\to \mathcal{X}\times Spec(\mathcal{O}_{\C_{p}}/p)$ such that $F_{Y_0}^{e*}f^{*}(\mathcal{E}/p)$ is trivial for some $e\geq 0$. We can lift $f$ to a finite \'etale cover $\mathcal{Y}\to \mathcal{X}$. Denote by $Y$ the generic fiber of $\mathcal{Y}$. Then $\mathcal{E}^{+}\vert_{Y}$ is $F^{e}$-trivial. So by Theorem \ref{tf} we have $\mathcal{E}^{+}\in \mathcal{B}^{p\acute{e}t}(\mathcal{O}^{+}_{X})$.
\end{proof}
So pullback to the pro-\'etale site gives a functor $\mathcal{B}^{s}(\mathcal{X})\to \mathcal{B}^{p\acute{e}t}(\mathcal{O}_{X}^{+})$. Using Theorem \ref{thm21} we get our main result:
\begin{thm}
The composition $\mathcal{B}^{s}(\mathcal{X})\xrightarrow{\mu^{*}} \mathcal{B}^{p\acute{e}t}(\mathcal{O}_{X}^{+})\xrightarrow{\rho_{\mathcal{O}}}Rep_{\pi_1(X)}(\mathcal{O}_{\C_p})$ is an exact functor of tensor categories
\begin{center}
$
DW:\mathcal{B}^{s}(\mathcal{X})\to Rep_{\pi_1(X)}(\mathcal{O}_{\C_p})
$
\end{center}
compatible with duals, internal homs and exterior products. Moreover, for any morphism $f:\mathcal{Y}\to \mathcal{X}$ of proper connected admissible formal schemes over $Spf(\mathcal{O}_{\C_p})$, with generic fiber $f_{\C_p}:Y\to X$, the following diagram commutes:
\begin{center}
$\xymatrix{
\mathcal{B}^{s}(\mathcal{X})\ar[d]^{f_{\C_p}^{*}}\ar[r]^-{DW}  & Rep_{\pi_1(X)}(\mathcal{O}_{\C_p})\ar[d]^{f_{\C_p}^{*}}\\
\mathcal{B}^{s}(\mathcal{Y})\ar[r]^-{DW} & Rep_{\pi_1(Y)}(\mathcal{O}_{\C_p}).
}$
\end{center}
\end{thm}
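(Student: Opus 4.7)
The plan is to assemble the theorem from three ingredients already established in the excerpt: Proposition \ref{prop41}, which guarantees that $\mu^{*}$ maps $\mathcal{B}^{s}(\mathcal{X})$ into $\mathcal{B}^{p\acute{e}t}(\mathcal{O}_{X}^{+})$; Theorem \ref{thm21}, which provides the exact tensor functor $\rho_{\mathcal{O}}$ and records its behavior under pullbacks of connected proper rigid analytic spaces; and Lemma \ref{lem11}, which identifies the pro-étale pullback of an integral model with the integral model of the pullback. Thus the composition $DW = \rho_{\mathcal{O}} \circ \mu^{*}$ lands in $Rep_{\pi_{1}(X)}(\mathcal{O}_{\C_{p}})$ and the only work is to check compatibility with the categorical operations and with morphisms of formal schemes.

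For exactness and the tensor/duality/internal hom/exterior power compatibilities, I would argue in two steps. First, $\mu^{*}$ is a tensor functor by construction (it is given by $\mu^{-1}(-) \otimes_{\mu^{-1}\mathcal{O}_{\mathcal{X}}}\mathcal{O}_{X}^{+}$ along a morphism of ringed sites), and it preserves short exact sequences of vector bundles: any exact sequence $0 \to \mathcal{E}_{1} \to \mathcal{E}_{2} \to \mathcal{E}_{3} \to 0$ in $\mathcal{B}^{s}(\mathcal{X})$ is locally split on $\mathcal{X}_{Zar}$ (since $\mathcal{E}_{3}$ is locally free), hence remains exact after pulling back to $X_{pro\acute{e}t}$. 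Second, Theorem \ref{thm21} tells us $\rho_{\mathcal{O}}$ is exact and preserves tensor products, duals, internal homs, and exterior products. Composing the two yields the required properties for $DW$.

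For the commutative diagram, given a morphism $f \colon \mathcal{Y} \to \mathcal{X}$ of proper connected admissible formal $\mathcal{O}_{\C_{p}}$-schemes with generic fiber $f_{\C_{p}} \colon Y \to X$, Lemma \ref{lem11} supplies a canonical identification
\[
(f_{\C_{p}})_{pro\acute{e}t}^{*}(\mathcal{E}^{+}) \;\cong\; (f^{*}\mathcal{E})^{+},
\]
so that $\mu_{\mathcal{Y}}^{*} \circ f^{*} \cong (f_{\C_{p}})_{pro\acute{e}t}^{*} \circ \mu_{\mathcal{X}}^{*}$. Combined with the pullback compatibility $\rho_{f_{\C_{p}}^{*}\mathcal{E}^{+}} = f_{\C_{p}}^{*}\rho_{\mathcal{E}^{+}}$ from Theorem \ref{thm21}, we obtain the desired equality $DW(f^{*}\mathcal{E}) = f_{\C_{p}}^{*}\,DW(\mathcal{E})$ functorially in $\mathcal{E}$.

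The main (mild) obstacle is really just a sanity check that $\mathcal{B}^{s}(\mathcal{X})$ is genuinely closed under the operations so that the compatibility statements make sense on the source category — this is handled by the preceding lemma asserting closure of $\mathcal{B}^{s}(\mathcal{X})$ under these operations. Once that is noted, the proof of the theorem is a formal assembly: the content has been isolated in Proposition \ref{prop41} (existence), Theorem \ref{thm21} (properties of $\rho_{\mathcal{O}}$), and Lemma \ref{lem11} (functoriality of integral models on the pro-étale site).
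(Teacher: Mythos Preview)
Your proof is correct and follows essentially the same route as the paper: the paper's own proof is a one-liner invoking Theorem~\ref{thm21} and Lemma~\ref{lem11}, and your argument is simply a fleshed-out version of that, with Proposition~\ref{prop41} (implicit in the paper's formulation of the composition) made explicit and the routine checks for $\mu^{*}$ spelled out.
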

\begin{proof}
Everything follows from Theorem \ref{thm21} and Lemma \ref{lem11}.
\end{proof}
Using Theorem \ref{ff} we see that:
\begin{cor}
There is an exact tensor functor
\begin{center}
$
DW_{\Q}:\mathcal{B}^{s}(X)\to Rep_{\pi_1(X)}(\C_p)
$
\end{center}
which is compatible with duals, internal homs and exterior products. It is again compatible with pullback along any morphism $f:X\to Y$ of proper connected rigid analytic varieties.\\
If $X$ is seminormal, then $DW_{\Q}$ is fully faithful.
\end{cor}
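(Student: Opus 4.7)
The plan is to define $DW_{\Q}$ by lifting the integral functor $DW:\mathcal{B}^{s}(\mathcal{X})\to Rep_{\pi_1(X)}(\mathcal{O}_{\C_p})$ of the preceding theorem to the analytic generic fiber and then inverting $p$. Given $E\in \mathcal{B}^{s}(X)$, the definition of the category provides a proper admissible formal model $\mathcal{X}$ of $X$ together with an integral bundle $\mathcal{E}\in \mathcal{B}^{s}(\mathcal{X})$ whose generic fiber equals $E$. Set
\begin{equation*}
DW_{\Q}(E):=DW(\mathcal{E})\otimes_{\mathcal{O}_{\C_p}}\C_p,
\end{equation*}
and define the action on morphisms in the obvious way after writing each morphism of $E$'s as (an isogeny class of) morphisms between integral models on a common blowup.

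The main nontrivial step is to show that this definition is independent of the choice $(\mathcal{X},\mathcal{E})$. Given two such choices $(\mathcal{X}_1,\mathcal{E}_1)$ and $(\mathcal{X}_2,\mathcal{E}_2)$, Raynaud's theorem (invoked already in the closure lemma for $\mathcal{B}^s(X)$) yields a formal model $\mathcal{X}'$ admitting admissible blowups $\mathcal{X}'\to \mathcal{X}_i$ which induce the identity on $X$. Pulling back via Lemma \ref{lem11}, we get $\mathcal{O}_{X}^{+}$-modules $(\mathcal{E}_1)^{+}$ and $(\mathcal{E}_2)^{+}$ whose rationalizations are canonically identified with the pro-\'etale pullback of the analytic bundle $E$ (using Lemma \ref{veceq} to identify vector bundles across the topologies). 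Both integral structures lie in $\mathcal{B}^{p\acute{e}t}(\mathcal{O}_X^{+})$ by Proposition \ref{prop41}, and they define the same object of the isogeny category $\mathcal{B}^{p\acute{e}t}(\mathcal{O}_X)$. Since $\rho_{\mathcal{O}}$ from Theorem \ref{thm21} is functorial and the construction $\rho_{\mathcal{O}}\otimes \Q$ lands in $\C_p$-representations, the resulting representation is independent of the choice of model. Exactness, the tensor structure, duals, internal homs, and exterior products, as well as functoriality under pullback along morphisms of proper connected rigid analytic varieties, all descend from the analogous properties of $DW$ already established, once one observes that a morphism $f:X\to Y$ can always be spread to a morphism of (suitably blown-up) formal models.

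For the full faithfulness assertion, the key point is that $DW_{\Q}$ factors as
\begin{equation*}
\mathcal{B}^{s}(X)\xrightarrow{\iota} \mathcal{B}^{p\acute{e}t}(\mathcal{O}_X)\xrightarrow{\rho} Rep_{\pi_1(X)}(\C_p),
\end{equation*}
where $\iota$ sends $E$ to itself (legitimately an object of $\mathcal{B}^{p\acute{e}t}(\mathcal{O}_X)$ by Proposition \ref{prop41}) and $\rho$ is the functor from Theorem \ref{thm21}. The functor $\iota$ is fully faithful because Lemma \ref{veceq} identifies vector bundles on $X_{\mathrm{an}}$ with finite locally free $\mathcal{O}_X$-modules on $X_{pro\acute{e}t}$, and $\iota$ is simply the restriction of this equivalence to a full subcategory. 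The functor $\rho$ is fully faithful under the seminormality hypothesis by Theorem \ref{ff}.

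I expect the main obstacle to be the model-independence step: verifying that different integral models, and in particular the Frobenius pulled-back trivializations used implicitly in Proposition \ref{prop41}, produce canonically isomorphic $\C_p$-representations. The remaining steps are largely formal consequences of the integral theory built up in Sections 3 and 4 together with Raynaud's theory, with full faithfulness being an immediate combination of Lemma \ref{veceq} and Theorem \ref{ff}.
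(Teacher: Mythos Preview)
Your proposal is correct and follows essentially the same approach as the paper. The paper in fact gives no explicit proof of this corollary at all: it simply writes ``Using Theorem~\ref{ff} we see that:'' and states the result, treating the passage from the integral functor $DW$ to $DW_{\Q}$ (including model-independence) as immediate from the preceding constructions and Raynaud's theory, and invoking Theorem~\ref{ff} for full faithfulness. Your write-up makes explicit the model-independence argument and the factorization $\mathcal{B}^{s}(X)\xrightarrow{\iota}\mathcal{B}^{p\acute{e}t}(\mathcal{O}_X)\xrightarrow{\rho}Rep_{\pi_1(X)}(\C_p)$, which is exactly the content the paper leaves implicit.
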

In the language of \cite{Xu}, Proposition \ref{prop41} implies that all vector bundles with numerically flat reduction are Weil-Tate (compare Definition \ref{wt}). The case of curves has already been dealt with in  \cite[Corollaire 14.5]{Xu} (using the Faltings topos). All Weil-Tate vector bundles are semistable in the following sense (compare with \cite[Theorem 9.7]{DW3}):
\begin{prop}
Let $E$ be a vector bundle on $X$ such that $\lambda^{*}E\otimes \hat{\mathcal{O}}_{X}$ is trivialized by a profinite \'etale cover. Then $f^{*}E$ is semistable of degree $0$ for every morphism $f:C\to X$ where $C$ is s smooth projective curve.
\end{prop}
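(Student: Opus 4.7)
The plan is first to pull the hypothesis back along $f$, reducing to the statement on the curve itself; then to deduce semistability and the vanishing of the degree simultaneously from a single key lemma about nonzero morphisms from line bundles into a Weil-Tate bundle. The pullback reduction is immediate: if $\lambda^{*}E\otimes\hat{\mathcal{O}}_{X}$ is trivialized by a profinite \'etale cover $\tilde{Y}=\varprojlim_{i}Y_{i}\to X$, then the fibre product $\tilde{Y}\times_{X}C=\varprojlim_{i}(Y_{i}\times_{X}C)\to C$ is again a profinite \'etale cover of $C$ and $\lambda^{*}(f^{*}E)\otimes\hat{\mathcal{O}}_{C}$ is trivialized on it. So one may assume $X=C$ and call the resulting Weil-Tate bundle $F:=f^{*}E$.

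The key lemma I would prove is: \emph{if $M$ is a Weil-Tate vector bundle on $C$ and $L$ is a line bundle on $C$ with $\mathrm{Hom}_{C}(L,M)\neq 0$, then $\deg L\leq 0$.} Granted this, for any coherent subsheaf $G\subseteq F$ of rank $s$ the determinant map $\det G\hookrightarrow \Lambda^{s}F$ is a nonzero morphism from the line bundle $\det G$ to the Weil-Tate bundle $\Lambda^{s}F$ (exterior powers preserve the Weil-Tate property: if $F\otimes\hat{\mathcal{O}}_{C}\cong\mathbb{L}\otimes\hat{\mathcal{O}}_{C}$ then $\Lambda^{s}F\otimes\hat{\mathcal{O}}_{C}\cong\Lambda^{s}\mathbb{L}\otimes\hat{\mathcal{O}}_{C}$). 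The lemma then gives $\deg G=\deg\det G\leq 0$. Applied to $F$ itself this yields $\deg F\leq 0$, and applied to $F^{\vee}$, which is Weil-Tate since duals preserve the condition (Lemma \ref{lem24}), it yields $\deg F^{\vee}\leq 0$, so $\deg F=0$. Consequently $\mu(G)\leq 0=\mu(F)$ for every subsheaf $G$, which is exactly semistability of degree zero.

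For the key lemma, a nonzero $\phi:L\to M$ tensored with $\hat{\mathcal{O}}_{C}$ yields a nonzero $\hat{\mathcal{O}}_{C}$-linear morphism, and via the Weil-Tate isomorphism $M\otimes\hat{\mathcal{O}}_{C}\cong\mathbb{L}_{M}\otimes\hat{\mathcal{O}}_{C}$ pulled back to a trivializing cover $\tilde{Y}=\varprojlim Y_{i}$ (with each $Y_{i}$ connected finite \'etale of degree $n_{i}\geq 1$ over $C$), one obtains a nonzero element of $H^{0}(\tilde{Y},L^{-1}\otimes\hat{\mathcal{O}}_{\tilde{Y}})^{r}$. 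Since $\lambda_{*}\hat{\mathcal{O}}_{Y_{i}}=\mathcal{O}_{Y_{i},\mathrm{an}}$, the projection formula gives $H^{0}(Y_{i},L^{-1}|_{Y_{i}}\otimes\hat{\mathcal{O}}_{Y_{i}})=H^{0}(Y_{i,\mathrm{an}},L^{-1}|_{Y_{i}})$, which vanishes whenever $\deg L>0$ because then $\deg L^{-1}|_{Y_{i}}=-n_{i}\deg L<0$ on the smooth projective curve $Y_{i}$. Passing from this pointwise vanishing to vanishing on $\tilde{Y}$ produces the desired contradiction.

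The main obstacle is precisely this last passage. The colimit description of pro-\'etale sections \cite[Lemma 3.16]{Sch1} holds immediately for $\nu^{-1}$-sheaves such as $\mathcal{O}_{C}^{+}/p^{n}$, but the $p$-adically completed sheaf $\hat{\mathcal{O}}$ requires more care: reducing modulo $p^{n}$ one picks up $H^{1}$-contributions from the short exact sequences $0\to L^{-1}\otimes\mathcal{O}^{+}\xrightarrow{p^{n}} L^{-1}\otimes\mathcal{O}^{+}\to L^{-1}\otimes\mathcal{O}^{+}/p^{n}\to 0$, and I would handle these either via a Mittag-Leffler argument along the tower combined with the fact that $\Gamma(\tilde{Y},\hat{\mathcal{O}}^{+}_{C})=\mathcal{O}_{\mathbb{C}_{p}}$ for connected $\tilde{Y}$ (established earlier in the paper), or alternatively by bypassing the computation entirely and invoking the $p$-adic analogue of Narasimhan--Seshadri on curves, which directly asserts that a bundle on $C$ whose associated $\hat{\mathcal{O}}$-module comes from a continuous $\mathcal{O}_{\mathbb{C}_{p}}$-representation of $\pi_{1}(C)$ is semistable of degree zero.
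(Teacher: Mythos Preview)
Your reduction to the curve and the use of exterior powers and duals are the same as the paper's. The divergence is in the direction of the estimate for the ``key lemma''. You try to prove \emph{vanishing}: for $\deg L>0$ you want $H^{0}(\tilde{Y},L^{-1}\otimes\hat{\mathcal{O}})=0$. The paper instead proves \emph{growth}: after replacing a destabilizing $L$ by a tensor power so that $\deg L\geq g$, Riemann--Roch gives $\dim_{\C_p}\Gamma(Y_{i},L|_{Y_i})\geq \deg(f_{i})\to\infty$, while
\[
\varinjlim_{i}\Gamma(Y_{i},L|_{Y_i})=\Gamma(\tilde{Y},\lambda^{*}L)\subset \Gamma(\tilde{Y},\lambda^{*}E)\subset \Gamma(\tilde{Y},\lambda^{*}E\otimes\hat{\mathcal{O}}_{X})=\C_p^{r},
\]
a finite-dimensional space. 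The crucial point is that the first equality is available precisely because $\lambda^{*}L$ is a module over the \emph{uncompleted} sheaf $\mathcal{O}_{X}=\nu^{-1}\mathcal{O}_{X_{\acute{e}t}}$, so \cite[Lemma~3.16]{Sch1} applies directly; the completed sheaf $\hat{\mathcal{O}}_{X}$ enters only at the very last step, where one uses $\Gamma(\tilde{Y},\hat{\mathcal{O}}_{X}^{+})=\mathcal{O}_{\C_p}$ to bound the target.

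Your obstacle is therefore real and not easily bypassed. Sections of $L^{-1}\otimes\hat{\mathcal{O}}$ over $\tilde{Y}$ involve $\varprojlim_{n}\varinjlim_{i}$ and do not reduce to the $Y_{i}$; the $H^{1}$-terms you would need to control via Mittag--Leffler are $H^{1}(Y_{i},L^{-1}|_{Y_i}\otimes\mathcal{O}^{+}/p^{n})$, whose ranks grow with $i$ (Riemann--Roch again), so there is no uniform bound to feed into a limit argument. And invoking a $p$-adic Narasimhan--Seshadri statement is circular, since that is exactly the proposition at hand. The fix is simply to flip the inequality: keep the destabilizing line bundle on the \emph{positive} side, work with the uncompleted pullback $\lambda^{*}L$ where the colimit description is valid, and derive a contradiction from an unbounded subspace of $\C_p^{r}$. (The paper also separates off the easy case where the tower stabilizes at some finite $Y_{i}$, in which case $E|_{Y_i}$ is already trivial.)
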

\begin{proof}
We have to show that any vector bundle $E$ on a smooth projective curve for which $\lambda^{*}E\otimes \hat{\mathcal{O}}_{X}$ is profinite \'etale trivializable, is semistable of degree $0$.  So assume that $X$ is a curve of genus $g$, and that $\tilde{Y}=\varprojlim{Y_{i}}\to X$ is a profinite \'etale cover trivializing $\lambda^{*}E\otimes \hat{\mathcal{O}}_{X}$. If $\tilde{Y}$ stabilizes, i.e. if $\lambda^{*}E\otimes \hat{\mathcal{O}}_{X}$ becomes trivial on some finite \'etale cover $f_{i}:Y_{i}\to X$, then the pullback $f_{i}^{*}E$ to $Y_{i}$ is also trivial (as $\lambda_{Y_{i}*}\hat{\mathcal{O}}_{Y_{i}}=\mathcal{O}_{(Y_{i})_{an}}$), so in particular semistable of degree $0$. But then $E$ is also semistable of degree $0$. So assume that $\tilde{Y}$ does not stabilize. We can assume that $deg(E)\geq 0$ (otherwise pass to the dual bundle). Let $r$ be the rank of $E$. Assume that $L\subset E$ is destabilizing. By passing to exterior powers we can assume that $L$ is a line bundle, and by passing to tensor powers we can assume that $deg(L)\geq g$. Then by Riemann-Roch $dim_{\C_p}\Gamma(Y_{i}, f_{i}^{*}L)\geq deg(f_{i})$, where $f_{i}$ is the finite \'etale map $Y_{i}\to X$. As $\tilde{Y}$ does not stabilize, $deg(f_{i})$ must grow to infinity. But $\varinjlim \Gamma(Y_{i}, f_{i}^{*}L)=\Gamma(\tilde{Y}, L)\subset \Gamma(\tilde{Y}, E)\subset \Gamma(\tilde{Y}, E\otimes \hat{\mathcal{O}}_{X})=\C_{p}^{r}$.\\
In the same way one may check that the degree of $E$ must be $0$.
\end{proof}
If $X$ is an algebraic variety the proposition says that $E$ is numerically flat. In general, if $X$ is not algebraic, one may expect that $E$ is semistable with respect to any polarization on the special fiber of a formal model in the sense of \cite{Li}. We do not pursue this question here.
\subsubsection{The case of line bundles}
We define $\hat{\C}_p:=\hat{\mathcal{O}}_{\C_p}[\frac{1}{p}]$, as sheaves on the pro-\'etale site, and say that a vector bundle $E$ is associated to a $\hat{\C}_p$-local system, if there exists a locally free $\hat{\C}_p$-module $\mathbb{L}$ such that $\mathbb{L}\otimes_{\hat{\C}_p} \hat{\mathcal{O}}_{X}\cong \hat{\mathcal{O}}_{X}\otimes \lambda^{*}E$.\\
The following proposition shows in particular that any torsion line bundle is associated to a local system.
\begin{prop}\label{proppow}
Let $L$ be a line bundle on $X$ such that $L^{\otimes n}$ is Weil-Tate. Then $L$ is associated to a $\hat{\C}_{p}$-local system $\mathbb{M}$.
\end{prop}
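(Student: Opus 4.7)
My plan is to translate the question into Kummer-theoretic cohomology on the pro-\'etale site and carry out a diagram chase. Write $c := [L \otimes \hat{\mathcal{O}}_X] \in H^1(X_{pro\acute{e}t}, \hat{\mathcal{O}}_X^\times)$ for the class of the pulled-back line bundle, and let
\[
\alpha \colon H^1(X_{pro\acute{e}t}, \hat{\C}_p^\times) \longrightarrow H^1(X_{pro\acute{e}t}, \hat{\mathcal{O}}_X^\times)
\]
be the map induced by the inclusion $\hat{\C}_p \hookrightarrow \hat{\mathcal{O}}_X$. Rank one $\hat{\C}_p$-local systems are classified by the source of $\alpha$, and $\alpha$ is precisely the operation ``tensor with $\hat{\mathcal{O}}_X$''. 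Thus the hypothesis that $L^{\otimes n}$ is Weil-Tate is exactly the statement that $nc \in \operatorname{im}(\alpha)$, and the goal is to conclude that $c \in \operatorname{im}(\alpha)$.

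The main tool I will use is the pair of pro-\'etale Kummer short exact sequences
\[
1 \to \mu_n \to \hat{\C}_p^\times \xrightarrow{(-)^n} \hat{\C}_p^\times \to 1, \qquad 1 \to \mu_n \to \hat{\mathcal{O}}_X^\times \xrightarrow{(-)^n} \hat{\mathcal{O}}_X^\times \to 1,
\]
together with the morphism between them that is the identity on $\mu_n$. The first is exact because $\C_p$ already contains all $n$-th roots of units. The second requires that units of $\hat{\mathcal{O}}_X$ admit $n$-th roots on a pro-\'etale refinement, which follows from the perfectoid property of the affinoid perfectoid pro-\'etale covers of $X$ (Hensel's lemma when $n$ is coprime to $p$, and surjectivity of Frobenius when $n=p$). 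Since $X$ is connected and proper, $H^0(\hat{\C}_p^\times) = H^0(\hat{\mathcal{O}}_X^\times) = \C_p^\times$ is $n$-divisible, so the first connecting map in each long exact sequence vanishes and $H^1(X_{pro\acute{e}t}, \mu_n)$ embeds into both $H^1(\hat{\C}_p^\times)$ and $H^1(\hat{\mathcal{O}}_X^\times)$.

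Now for the chase. Write $nc = \alpha(c')$ with $c' \in H^1(\hat{\C}_p^\times)$, and let $\delta$ denote the boundary map $H^1 \to H^2(\mu_n)$ in either Kummer sequence. By naturality one has $\delta(c') = \delta(\alpha(c')) = \delta(nc) = 0$, the last equality holding because $nc$ is the $n$-th power of $c$ in $H^1(\hat{\mathcal{O}}_X^\times)$. Exactness of the $\hat{\C}_p$-Kummer sequence then yields $c_1 \in H^1(\hat{\C}_p^\times)$ with $c' = nc_1$, and the difference $d := c - \alpha(c_1)$ satisfies $nd = nc - \alpha(nc_1) = nc - \alpha(c') = 0$, so $d$ is $n$-torsion. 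By the $\hat{\mathcal{O}}_X^\times$-Kummer sequence, $d$ is the image of some $\eta \in H^1(X_{pro\acute{e}t}, \mu_n)$, and since this image factors through $H^1(\hat{\C}_p^\times)$ (the inclusion $\mu_n \subset \hat{\mathcal{O}}_X^\times$ factors through $\hat{\C}_p^\times$), one has $d = \alpha(\eta')$ for some $\eta' \in H^1(\hat{\C}_p^\times)$. Hence $c = \alpha(c_1 + \eta')$, and the rank one $\hat{\C}_p$-local system $\mathbb{M}$ corresponding to $c_1 + \eta'$ satisfies $L \otimes \hat{\mathcal{O}}_X \cong \mathbb{M} \otimes \hat{\mathcal{O}}_X$.

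The main obstacle I foresee is the verification that the Kummer sequence for $\hat{\mathcal{O}}_X^\times$ is short exact on the pro-\'etale site; once this perfectoid input is in place, everything else is a standard diagram chase. Note that the argument exploits the linearity of the Picard group to ``divide by $n$'' cohomologically, which is specific to line bundles; I do not expect this strategy to extend directly to higher-rank bundles $E$ with $E^{\otimes n}$ Weil-Tate.
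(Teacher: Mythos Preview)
Your proof is correct and shares the paper's Kummer-theoretic setup, but carries the diagram chase further. After establishing the two Kummer sequences and extracting an $n$-th root $c_1$ of $c'$ (the paper's $\mathbb{H}$), the paper stops at the observation $(\mathbb{H}\otimes\hat{\mathcal{O}}_X)^{\otimes n}\cong(L\otimes\hat{\mathcal{O}}_X)^{\otimes n}$; it then passes to a finite \'etale Kummer cover $\pi\colon X'\to X$ on which these become isomorphic, making $\pi^*L$ Weil-Tate, and finishes with a separate descent lemma (using full faithfulness of $\mathbb{K}[\tfrac1p]\mapsto\mathbb{K}[\tfrac1p]\otimes\hat{\mathcal{O}}_{X'}$ from Corollary~\ref{full} to descend the local system along a Galois cover). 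You instead dispose of the $n$-torsion discrepancy $d=c-\alpha(c_1)$ directly: it lies in the image of $H^1(\mu_n)$, and since $\mu_n\hookrightarrow\hat{\mathcal{O}}_X^\times$ factors through $\hat{\C}_p^\times$, one has $d\in\operatorname{im}\alpha$ at once. This bypasses the cover and the descent step entirely. The paper's detour has the merit that the descent lemma it isolates applies in arbitrary rank and is of independent use; your argument is shorter and stays purely cohomological, but is specific to rank one, as you yourself note.
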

\begin{proof}
We use Kummer-theory on the pro-\'etale site. Let $\mathbb{L}$ be the $\hat{\mathcal{O}}_{\C_p}$-local system associated to $L^{\otimes n}$.
\begin{claim}
There is a commutative diagram
\begin{center}
$\xymatrix{
0\ar[r] & \mu_{n}\ar[r]\ar[d]^{=}& \hat{\mathcal{O}}_{\C_p}^{\times} \ar[r]^{x\to x^{n}}\ar[d] & \hat{\mathcal{O}}_{\C_p}^{\times} \ar[r]\ar[d] & 0\\
0\ar[r] & \mu_{n}\ar[r] & \hat{\mathcal{O}}_{X}^{\times}\ar[r]^{x\to x^{n}} & \hat{\mathcal{O}}_{X}^{\times} \ar[r] & 0
}$
\end{center}
of Kummer exact sequences.
\end{claim}
The commutativity of the diagram is clear. Moreover, so is exactness of the upper sequence. For the lower sequence, note that as usual the only non-trivial part is showing right exactness. This follows in the same way as exactness of the Artin-Schreier sequence (for the tilted structure sheaf) proved in the proof of \cite[Theorem 5.1]{Sch1}: If $U=\varprojlim Spa(A_{i}, A_{i}^{+})\in X_{pro\acute{e}t}$ is affinoid perfectoid with associated perfectoid space $\hat{U}=Spa(A, A^{+})$ (i.e. $(A, A^{+})$ is the completion of the direct limit of the $(A_{i}, A_{i}^{+})$), then $\hat{\mathcal{O}}_{X}(U)=A$, and so any equation $x^{n}-a=0$ for fixed $a\in \hat{\mathcal{O}}_{X}(U)$ can be solved by passing to a finite \'etale extension of $A$, which gives a finite \'etale map of affinoid perfectoid spaces $\tilde{V}\to \hat{U}$, say $\tilde{V}=Spa(B, B^{+})$. But now by \cite[Lemma 7.5]{Sch} any finite \'etale cover of $\hat{U}$ comes from a finite \'etale cover $V\to U$ in $X_{pro\acute{e}t}$ where then $V$ is affinoid perfectoid (as an object in $X_{pro\acute{e}t}$) with $\hat{V}=\tilde{V}$. Hence $\hat{\mathcal{O}}_{X}(V)=B$, so we can find an $n$-th root of $a$ by passing to the \'etale cover $V\to U$.\\
Now, taking pro-\'etale cohomology gives a commutative diagram
\begin{center}
$\xymatrix{
H^{1}(\mu_{n})\ar[r]\ar[d]^{=}& H^{1}(\hat{\mathcal{O}}_{\C_p}^{\times}) \ar[r]\ar[d] & H^{1}(\hat{\mathcal{O}}_{\C_p}^{\times}) \ar[r]\ar[d] & H^{2}(\mu_{n})\ar[d]^{=}\\
H^{1}(\mu_{n})\ar[r]& H^{1}(\hat{\mathcal{O}}_{X}^{\times}) \ar[r] & H^{1}(\hat{\mathcal{O}}_{X}^{\times}) \ar[r] & H^{2}(\mu_{n}),
}$
\end{center}
where the map $H^{1}(\hat{\mathcal{O}}_{\C_p}^{\times})\to H^{1}(\hat{\mathcal{O}}_{X}^{\times})$ is given by taking a rank $1$ local system $\mathbb{L}$ to $\hat{\mathcal{O}}_{X}\otimes \mathbb{L}$.\\
By some diagram chasing one gets from this that there exists an $\mathcal{O}_{\C_p}$-local system $\mathbb{H}$ such that $\mathbb{H}^{\otimes n}\cong \mathbb{L}$. From this one sees that 
\begin{center}$(\mathbb{H}\otimes \hat{\mathcal{O}}_{X})^{\otimes n}\cong \mathbb{L}\otimes \hat{\mathcal{O}}_{X}\cong (\lambda^{*}L \otimes \hat{\mathcal{O}}_{X})^{\otimes n}$. 
\end{center}
But then these sheaves become isomorphic on some finite \'etale Kummer cover $\pi:X'\to X$, so that $\lambda_{X'}^{*}(\pi^{*}L)\otimes \hat{\mathcal{O}}_{X'}=\pi^{*}_{pro\acute{e}t}(\lambda_{X}^{*}L\otimes \hat{\mathcal{O}}_{X})\cong \pi_{pro\acute{e}t}^{*}\mathbb{H}\otimes \hat{\mathcal{O}}_{X'}$. In particular $\pi^{*}L$ is Weil-Tate.\\
We conclude with the following lemma:
\begin{lem}
Let $\pi:X'\to X$ be a finite \'etale cover and let $E$ be a vector bundle on $X$ such that $\pi^{*}E$ is associated to an $\hat{\mathcal{O}}_{\C_{p}}$-local system $\mathbb{K}$. Then $E$ is associated to a $\hat{\C}_p$-local system.
\end{lem}
For this assume that $\pi$ is Galois with Galois group $G$. We then have a canonical Galois descent datum on $\pi_{pro\acute{e}t}^{*}(E\otimes \hat{\mathcal{O}}_{X})\cong \mathbb{K}\otimes \hat{\mathcal{O}}_{X'}$. This induces a descent datum on $\mathbb{K}[\frac{1}{p}]$, as $\mathbb{K}[\frac{1}{p}]\mapsto \mathbb{K}[\frac{1}{p}]\otimes \hat{\mathcal{O}}_{X'}$ is fully faithful (by Corollary \ref{full}), hence it descends to some $\mathbb{M}$ on $X$ and then, as the glueing datum is compatible with the one on $\pi_{pro\acute{e}t}^{*}(E\otimes \hat{\mathcal{O}}_{X})$, one has $\mathbb{M}\otimes \hat{\mathcal{O}}_{X}\cong E\otimes \hat{\mathcal{O}}_{X}$. 
\end{proof}
If $X$ is a proper algebraic variety over $\C_p$ there is a morphism of ringed spaces \begin{center}
$(X^{an}, \mathcal{O}_{X_{an}})\to (X, \mathcal{O}_{X})$ 
\end{center}
where $X^{an}$ denotes the analytificaction of $X$ in the sense of Huber. Then by non-archimedean GAGA the pullback along this induces an equivalence of categories between the categories of vector bundles on $X$ and $X^{an}$.\\
We denote by $Pic^{\tau}_{X}$ the torsion component of the identity of the Picard scheme of $X$, which parametrizes line bundles $L$ for which some power $L^{\otimes n}$ is algebraically equivalent to the trivial bundle.
\begin{prop}
Let $X$ be a proper geometrically connected geometrically normal algebraic variety over $K$ a finite extension of $\Q_p$. Then any $L \in Pic^{\tau}_{X_{\C_p}}(\C_p)$ is associated to a $\C_p$-local system.
\end{prop}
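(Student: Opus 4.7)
By Proposition \ref{proppow} it suffices to produce, for each $L\in\mr{Pic}^{\tau}_{X_{\C_p}}(\C_p)$, an integer $n\ge1$ such that $L^{\otimes n}$ is Weil-Tate. By definition of $\mr{Pic}^{\tau}$ we may pick $n$ with $M:=L^{\otimes n}\in \mr{Pic}^{0}_{X_{\C_p}}(\C_p)$, so the problem reduces to showing that every element of $\mr{Pic}^{0}_{X_{\C_p}}(\C_p)$ is Weil-Tate. Our plan is to exhibit, after suitable enlargement of the base, a proper integral model of $X_{\C_p}$ carrying a line bundle with generic fiber $M$ whose reduction mod $p$ is numerically flat; then $M\in\mc{B}^{s}(X_{\C_p})$ and Proposition \ref{prop41} together with the construction of $DW$ yield the Weil-Tate property.

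First I would spread out. Since $X$ is proper and geometrically normal over the characteristic-zero field $K$, the relative Picard functor $\mr{Pic}^{0}_{X/K}$ is representable by an abelian variety $A/K$. The point $M\in A(\C_p)$ is already defined over some finite extension $K'/K$, and after enlarging $K'$ further I may assume $A_{K'}$ has semistable reduction (semistable reduction theorem for abelian varieties) and that $X$ admits a proper flat model $\mc{X}_0$ over $\mc{O}_{K'}$. Base changing to $\mc{O}_{\C_p}$ gives a proper flat formal scheme $\mc{X}$ with generic fiber $X_{\C_p}$.

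The central step is to extend $M$ to a line bundle on $\mc{X}$. Using the Néron model $\mc{A}$ of $A_{K'}$ one has $A(K')=\mc{A}(\mc{O}_{K'})$, so $M$ extends to a section of $\mc{A}$. Interpreting this section as a line bundle on the integral model requires identifying it inside the relative Picard functor of $\mc{X}_0$; after possibly passing to an admissible blow-up of $\mc{X}_0$ (Raynaud), one obtains a line bundle $\mc{M}$ on the blown-up model with generic fiber $M$. Now the key point is that the specialization homomorphism sends $\mr{Pic}^{0}$ of the generic fiber into the numerically trivial subgroup of the special fiber (numerical equivalence classes are preserved by specialization in a proper flat family). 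For a line bundle, numerical triviality is exactly the condition of numerical flatness; hence $\mc{M}\otimes\overline{\F}_p$ is numerically flat, so $\mc{M}\in\mc{B}^{s}(\mc{X})$ and therefore $M\in\mc{B}^{s}(X_{\C_p})$. By Proposition \ref{prop41} the pullback $\mc{M}^{+}$ lies in $\mc{B}^{p\acute{e}t}(\mc{O}_{X}^{+})$, which is exactly saying that $M$ is Weil-Tate; Proposition \ref{proppow} then gives the $\hat{\C}_p$-local system attached to $L$.

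The main obstacle is the intermediate extension step: turning the $\mc{O}_{K'}$-point of the Néron model $\mc{A}$ into an honest line bundle on a formal model of $X_{\C_p}$ (rather than merely a class in the relative Picard functor), which is where the choice of admissible blow-up, together with representability of the relative Picard scheme for a proper flat geometrically normal family, has to be argued carefully. Once the line bundle $\mc{M}$ is in hand, the numerical flatness of its reduction is a formal consequence of the fact that specialization respects numerical equivalence.
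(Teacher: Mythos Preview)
Your reduction via Proposition \ref{proppow} and the observation that $\mr{Pic}^{0}_{X/K}$ is an abelian variety are correct and match the paper. The gap is the sentence ``the point $M\in A(\C_p)$ is already defined over some finite extension $K'/K$'': this is false. The $\C_p$-points of an abelian variety over $K$ form an uncountable $p$-adic Lie group, whereas $\bigcup_{K'/K\ \text{finite}} A(K')$ is countable, so a generic $M$ is transcendental and cannot be captured by any N\'eron model over a discrete valuation ring. Your extension strategy through $\mathcal{A}(\mathcal{O}_{K'})$ therefore never gets started, and your stronger intermediate claim that \emph{every} $M\in\mr{Pic}^{0}_{X_{\C_p}}(\C_p)$ itself lies in $\mathcal{B}^{s}(X_{\C_p})$ is not established (and is not what the paper proves).

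The paper addresses exactly this point, and the key input you are missing is a theorem of Coleman \cite[Theorem 4.1]{Col}: for an abelian variety $A$ over a $p$-adic local field, the quotient of $A(\C_p)$ by any $p$-adically open subgroup is torsion. After producing (via Temkin) a proper flat model $\mathcal{X}$ with geometrically reduced fibers, so that $\mr{Pic}^{\tau}_{\mathcal{X}}$ is representable by Raynaud and $\mr{Pic}^{\tau}_{\mathcal{X}}(\mathcal{O}_{\C_p})$ is $p$-adically open in $\mr{Pic}^{\tau}_{X_{K'}}(\C_p)$, Coleman's theorem forces some \emph{further} power $L^{\otimes n}$ to land in $\mr{Pic}^{\tau}_{\mathcal{X}}(\mathcal{O}_{\C_p})$, hence to have numerically flat reduction; only then does Proposition \ref{proppow} apply. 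In other words, passing from $L$ to a power in $\mr{Pic}^{0}$ is not enough --- one needs the additional torsion statement over $\C_p$, and this is not a spreading-out argument.
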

\begin{proof}
We may assume that $X$ has a $K$-rational point (otherwise pass to a finite extension of the base field). As $X$ is geometrically normal, $Pic^{0}_{X}$ is then an abelian variety (see \cite[Proposition 6.2.2]{Br}).
We now use similar arguments as in the proof of \cite[Proposition 10.2]{DW3}: 
By a theorem of Temkin (see \cite[Theorem 3.5.5]{Te}), after passing to a finite extension $K\subset K'$, we can assume that $X_{K'}$ admits a proper flat model $\mathcal{X}$ over $Spec(\mathcal{O}_{K'})$ with geometrically reduced fibers. Moreover, by \cite[Proposition 3.6]{DW3}, $\mathcal{X}$ is integral and $\mathcal{X}\to Spec(\mathcal{O}_{K'})$ is cohomologically flat in dimension $0$. Then $Pic^{\tau}_{\mathcal{X}}$ is representable by a separated group scheme over $Spec(\mathcal{O}_{K})$ by \cite[Corollaire (6.4.5)]{Ray}. Now $Pic^{0}_{X_{K'}}$ is an open sub-group scheme of $Pic^{\tau}_{X_{K'}}$ and hence $Pic^{0}_{X_{K'}}(\C_p)$ is a $p$-adically open subgroup of $Pic^{\tau}_{X_{K'}}(\C_p)$. Moreover $Pic^{\tau}_{\mathcal{X}}(\mathcal{O}_{\C_p})$ is $p$-adically open in $Pic^{\tau}_{X_{K'}}(\C_p)$. Hence the intersection $Pic^{\tau}_{\mathcal{X}}(\mathcal{O}_{\C_p})\cap Pic^{0}_{X_{K'}}(\C_p)$ is a $p$-adically open subgroup of $Pic^{0}_{X_{K'}}(\C_p)$. As $Pic^{0}_{X_{K'}}$ is an abelian variety we can then apply \cite[Theorem 4.1]{Col} to see that the quotient of $Pic^{0}_{X_{K'}}(\C_p)$ by $Pic^{\tau}_{\mathcal{X}}(\mathcal{O}_{\C_p})\cap Pic^{0}_{X_{K'}}(\C_p)$ is torsion. In particular for any line bundle $L\in Pic^{0}_{X_{K'}}(\C_p)$ there is some $n\geq 1$ such that $L^{\otimes n}$ admits an integral model which lies in $Pic^{\tau}_{\mathcal{X}}(\mathcal{O}_{\C_p})$. Hence also for any $L\in Pic^{\tau}_{X_{K'}}(\C_p)$ some power $L^{\otimes n}$ admits a model that lies in $Pic^{\tau}_{\mathcal{X}}(\mathcal{O}_{\C_p})$. But for any such model $\mathcal{M}$, the special fiber $\mathcal{M}_{\overline{\mathbb{F}}_{p}}$ is numerically flat. Now $\mathcal{M}$ induces a formal vector bundle $\hat{\mathcal{M}}$ on $\hat{\mathcal{X}}_{\C_p}$ where $\hat{\mathcal{X}}$ denotes the formal scheme obtained by completing $\mathcal{X}$ along its special fiber. Then the reduction of $\hat{\mathcal{M}}$ coincides with the reduction of $\mathcal{M}$, so is numerically flat. The generic fiber of $\hat{\mathcal{M}}$ is moreover given by $(L^{an})^{n}$, so that $(L^{an})^{n}$ is Weil-Tate. Thus, by Proposition \ref{proppow}, any $L\in Pic^{\tau}_{X_{K'}}(\C_p)$ is associated to a $\hat{\C}_p$ local system. 
\end{proof}
We recall that if $X$ is a projective scheme, then the line bundles in $Pic^{\tau}(X)$  are precisely the numerically flat line bundles.
\subsection{\'Etale parallel transport}
In this section we wish to show that the discussion in section 3.1 can be upgraded to construct \'etale parallel transport on pro-\'etale trivializable vector bundles. We will then compare our construction to the one from \cite{DW3}. This is in some sense close to the discussion in \cite[\S 8]{Xu}, where the results from \cite{DW1} for the curve case are recast in light of the Faltings topos. Note however that, in contrast to loc. cit., even though we need to pass through almost mathematics, our final statement (Theorem \ref{dwthm}) will be an honest isomorphism even at the integral level.\\
We will first recall the concept of \'etale parallel transport.\\
\begin{defn}\cite[\S 3]{DW1}
Let $X$ be a proper, connected rigid analytic variety over $\C_p$. The \'etale fundamental groupoid $\Pi_{1}(X)$ of $X$ is defined to be the category whose objects are given by $X(Spa(\C_{p}, \mathcal{O}_{\C_p}))$ and for any two points $x, y\in X(Spa(\C_p, \mathcal{O}_{\C_p}))$ we set $Mor(x, y)=Isom(F_{x}, F_{y})$. Here $F_{x}$ denotes the \'etale fiber functor with respect to the point $x$. $Mor(x, y)$ carries the profinite topology, making $\Pi_{1}(X)$ into a topological groupoid.
\end{defn}
\begin{rmk}\label{lequ}
Assume that $X$ is a finite type scheme over $Spec(\C_p)$, and denote by $X^{an}$ its analytification. By \cite[Theorem 3.1]{L1}, every finite \'etale cover of $X^{an}$ is algebraizable. In particular one gets an equivalence $\Pi_{1}(X)\cong \Pi_{1}(X^{an})$ of fundamental groupoids.
\end{rmk}
Let $Free_{r}(\mathcal{O}_{\C_p})$ (resp. $Free_{r}(\C_p)$) denote the topological groupoid of free rank $r$ modules over $\mathcal{O}_{\C_p}$ (resp. $\C_p$). Let $E$ be a vector bundle on $X$. We say that $E$ has \'etale parallel transport if the association $x\mapsto E_{x}$, can be extended to a functor $\Pi_{1}(X)\to Free_{r}(\C_p)$. Similarly, for a locally free $\mathcal{O}^{+}_{X_{an}}$-module $E^{+}$, we say that $E^{+}$ has \'etale parallel transport if $x\mapsto \Gamma(x^{*}E^{+})$ can be extended to a functor $\Pi_{1}(X)\to Free_{r}(\mathcal{O}_{\C_p})$.\\
Let $\mathcal{E}^{+}\in \mathcal{B}^{p\acute{e}t}(\mathcal{O}^{+}_{X})$, such that $\hat{\mathcal{E}}^{+}$ is trivial on $\tilde{Y}$. Let $r$ denote the rank of $\mathcal{E}^{+}$. We can define a functor 
\begin{center}
$
\alpha_{\hat{\mathcal{E}}^{+}}:\Pi_{1}(X)\to Free_{r}(\mathcal{O}_{\C_p})
$
\end{center}
of pro-\'etale parallel transport on $\hat{\mathcal{E}}^{+}$ as in \cite[\S 4]{DW3} (see also section 3): 
On objects $\alpha_{\hat{\mathcal{E}}^{+}}$ takes $x\in X(\C_p)$ to $\hat{\mathcal{E}}_{x}^{+}$. On morphisms, if we have an \'etale path $\gamma\in Mor_{\Pi(X)}(x, x')$, we can pick a $Spa(\C_p, \mathcal{O}_{\C_p})$-valued point $y$ of $\tilde{Y}$ lying over $x$, then $\gamma$ induces a point $y'$ over $x'$. Using triviality of $\mathcal{E}^{+}$ on $\tilde{Y}$ pulling back global sections along $y$, $y'$ will then give isomorphisms 
\begin{center}
$\hat{\mathcal{E}}_{x}^{+}\xleftarrow{y^{*}}\Gamma(\tilde{Y}, \hat{\mathcal{E}}^{+})\xrightarrow{y'^{*}} \hat{\mathcal{E}}_{x'}^{+}$.
\end{center}
and so we let $\gamma$ map to $y'^{*}\circ (y^{*})^{-1}$.\\
As in section 3.1 one can check that this is independent of the trivializing cover $\tilde{Y}$ and the chosen point $y$. Furthermore one can check in a similar fashion as in section 3.1 that this is a continuous functor of topological groupoids. Fixing a base point then gives back the representation from Theorem \ref{thm21}.\\
Now, if $E$ is a vector bundle on $X$ such that the pullback $\lambda^{*}E$ to the  pro-\'etale site lies in $\mathcal{B}^{p\acute{e}t}(\mathcal{O}_{X})$ we can also define \'etale parallel transport on $E$: For any point $x:Spa(\C_p, \mathcal{O}_{\C_p})\to X$ there is a canonical isomorphism 
\begin{center}
$\Gamma(x^{*}E)\cong \Gamma(x^{*}(\lambda^{*}E))\cong \Gamma(x^{*}(\lambda^{*}E\otimes \hat{\mathcal{O}}_{X}))$.
\end{center}
Hence, using \'etale parallel transport on $E\otimes \hat{\mathcal{O}}_{X}$, we get an isomorphism 
$E_{x}\xrightarrow{\cong}E_{y}$ for any \'etale path $x\mapsto y$. Moreover this construction is compatible with composition of \'etale paths.\\
The same construction also works for locally free $\mathcal{O}_{X_{an}}^{+}$-modules $E^{+}$ whose pullback to the pro-\'etale site lies in $\mathcal{B}^{p\acute{e}t}(\mathcal{O}_{X}^{+})$. We arrive at:
\begin{prop}
Let $E$ be a vector bundle on $X$ (resp. let $E^{+}$ be a locally free $\mathcal{O}^{+}_{X_{an}}$-module) for which the pullback to the pro-\'etale site $\lambda^{*}E$ (resp. $\lambda^{*}E^{+}$) lies in $\mathcal{B}^{p\acute{e}t}(\mathcal{O}_{X})$ (resp. $\mathcal{B}^{p\acute{e}t}(\mathcal{O}^{+}_{X})$). Then $E$ (resp. $E^{+}$) has \'etale parallel transport.\\
In particular we get functors
\begin{center}
$
\alpha:\mathcal{B}^{p\acute{e}t}(\mathcal{O}_{X_{an}})\to Rep_{\Pi_{1}(X)}(\C_{p})$\\
$\alpha_{\mathcal{O}_{\mathbb{C}_{p}}}:\mathcal{B}^{p\acute{e}t}(\mathcal{O}^{+}_{X_{an}})\to Rep_{\Pi_{1}(X)}(\mathcal{O}_{\mathbb{C}_{p}})
$.
\end{center}
\end{prop}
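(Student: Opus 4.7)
The plan is to bootstrap the parallel transport functor $\alpha_{\hat{\mathcal{E}}^{+}}$ already constructed on $\mathcal{B}^{p\acute{e}t}(\mathcal{O}_{X}^{+})$ to the analytic categories via the canonical identification of fibers at a $\C_p$-valued point. The key observation is that for any geometric point $x : Spa(\C_p, \mathcal{O}_{\C_p}) \to X$, one has $\Gamma(x^{*}\hat{\mathcal{O}}_{X}) = \C_p$ and $\Gamma(x^{*}\hat{\mathcal{O}}_{X}^{+}) = \mathcal{O}_{\C_p}$, since these are just the global sections of the structure sheaves on a point. Hence, under the natural maps $\lambda : X_{pro\acute{e}t} \to X_{an}$ and $\iota : \lambda^{*}E \to \lambda^{*}E \otimes \hat{\mathcal{O}}_{X}$, one obtains canonical isomorphisms
\[
\Gamma(x^{*}E) \xrightarrow{\;\cong\;} \Gamma(x^{*}\lambda^{*}E) \xrightarrow{\;\cong\;} \Gamma\bigl(x^{*}(\lambda^{*}E \otimes \hat{\mathcal{O}}_{X})\bigr),
\]
and analogously at the integral level with $\hat{\mathcal{O}}_{X}^{+}$ in place of $\hat{\mathcal{O}}_{X}$.

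First I would define the functor on objects by $x \mapsto \Gamma(x^{*}E)$ (respectively $x \mapsto \Gamma(x^{*}E^{+})$), which is free of rank $r$ over $\C_p$ (respectively $\mathcal{O}_{\C_p}$). On morphisms, given an \'etale path $\gamma \in Mor_{\Pi_{1}(X)}(x, x')$, I transport via the canonical isomorphism above to $\Gamma(x^{*}(\lambda^{*}E \otimes \hat{\mathcal{O}}_{X}))$, apply $\alpha_{\lambda^{*}E \otimes \hat{\mathcal{O}}_{X}}(\gamma)$ (which is defined since $\lambda^{*}E \otimes \hat{\mathcal{O}}_{X}$ becomes trivial on the profinite \'etale cover that trivializes $\lambda^{*}E$), and identify back with $\Gamma(x'^{*}E)$. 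The same recipe works for $E^{+}$ using $\alpha_{\widehat{\lambda^{*}E^{+}}}$ directly.

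Second, I would verify that this extension is functorial, i.e., compatible with composition of \'etale paths, and continuous in the profinite topology on $Mor_{\Pi_{1}(X)}(x, x')$. Both properties are inherited from the corresponding properties of $\alpha_{\widehat{\lambda^{*}E^{+}}}$ already established in the preceding discussion, since the identification of fibers is natural in $x$ and commutes with pullback along the trivializing cover $\tilde{Y} \to X$. Similarly, independence of the chosen lift $y$ of $x$ to $\tilde{Y}$ and of the trivializing cover itself transfers from the pro-\'etale case.

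The only mildly delicate point, and likely the main obstacle in presentation, is keeping track of the compatibility between the three natural identifications $\Gamma(x^{*}E) = \Gamma(x^{*}\lambda^{*}E) = \Gamma(x^{*}(\lambda^{*}E \otimes \hat{\mathcal{O}}_{X}))$ as $x$ varies along an \'etale path --- one needs a small commutative diagram involving the pullback of $\iota$ along $y, y' : Spa(\C_p) \to \tilde{Y}$ lying above $x, x'$. This is formal once one writes down the relevant pro-\'etale morphisms of ringed sites, and no new input is required. Fixing a base point and restricting to the automorphism group of the corresponding fiber then yields the continuous representations $\alpha$ and $\alpha_{\mathcal{O}_{\C_p}}$ of $\Pi_{1}(X)$, finishing the proof.
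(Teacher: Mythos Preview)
Your proposal is correct and follows essentially the same approach as the paper: the paper's argument (given in the paragraph immediately preceding the proposition rather than in a separate proof) also uses the canonical identifications $\Gamma(x^{*}E)\cong \Gamma(x^{*}\lambda^{*}E)\cong \Gamma(x^{*}(\lambda^{*}E\otimes \hat{\mathcal{O}}_{X}))$ to transport the already-constructed pro-\'etale parallel transport back to the fibers of $E$, and notes that compatibility with composition of paths is inherited. Your write-up simply fills in more of the routine verifications (continuity, independence of choices) that the paper leaves implicit.
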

\begin{rmk}
Here $Rep_{\Pi_{1}(X)}(\mathcal{O}_{\mathbb{C}_{p}})$ denotes the category of continuous functors from $\Pi_{1}(X)$ to $Free_{r}(\mathcal{O}_{\mathbb{C}_{p}})$. A functor $F:\Pi_{1}(X)\to Free_{r}(\mathcal{O}_{\mathbb{C}_{p}})$ is called continuous if the induced maps on morphisms $Mor(x, x')\to Mor(F(x), F(x'))$ are continuous maps for all\\ $x, x'\in \Pi_{1}(X)$.
\end{rmk}
Note that whenever $E^{+}=\mathcal{O}^{+}_{X_{an}}\otimes_{sp^{-1}\mathcal{O}_{\mathcal{X}}}sp^{-1}(\mathcal{E})$ comes from an integral model $(\mathcal{X}, \mathcal{E})$ of $(X, E)$ the canonical isomorphisms $\Gamma(sp(x)^{*}\mathcal{E})\cong \Gamma(x^{*}E^{+})$ also allow us to define parallel transport on $\mathcal{E}$.\\ 
For the comparison with the Deninger-Werner construction we need to work modulo $p^{n}$. As we have less control here, we need to pass through the almost setting.\\
So denote by $Free_{r}((\mathcal{O}_{\C_p}/p^{n})_{*})$ the groupoid (endowed with the discrete topology) of free $(\mathcal{O}_{\C_p}/p^{n})_{*}$-modules of rank $r$, where $(\mathcal{O}_{\C_p}/p^{n})_{*}$ again denotes the almost elements. We can then similarly define a mod $p^{n}$ almost parallel transport
\begin{center}
$
\alpha^{a}_{n}(\mathcal{E}^{+}/p^{n}):\Pi_{1}(X)\to Free_{r}((\mathcal{O}_{\C_p}/p^{n})_{*})
$.
\end{center}
Now as in the proof of Proposition \ref{prop31} we see that $(\mathcal{O}_{\C_p}/p^{n})^{r}=\Gamma(\tilde{Y}, \hat{\mathcal{E}}^{+})/p^{n}\xhookrightarrow{} \Gamma(\tilde{Y}, \mathcal{E}^{+}/p^{n})_{*}$ realizes $\alpha(\mathcal{E})(\gamma)/p^{n}$ as a subobject of $\alpha^{a}_{n}(\mathcal{E}^{+}/p^{n})(\gamma)$ for any \'etale path. Here by subobject we mean that there is a commutative diagram
\begin{gather}
\begin{aligned}
\xymatrix@C+4pc{
\hat{\mathcal{E}}_{x}^{+}/p^{n}\ar[r]^{\alpha(\mathcal{E})(\gamma)/p^{n}}\ar@{^{(}->}[d] & \hat{\mathcal{E}}_{x'}^{+}/p^{n}\ar@{^{(}->}[d] \\
(\hat{\mathcal{E}}_{x}^{+}/p^{n})_{*}\ar[r]^-{\alpha^{a}_{n}(\mathcal{E}^{+}/p^{n})(\gamma)} & (\hat{\mathcal{E}}_{x'}^{+}/p^{n})_{*}
}
\end{aligned}
\label{diag}
\end{gather}
for each \'etale path $\gamma$ from $x$ to $x'$, and this association is compatible with composition of paths.\\
Assume now that $X$ is a proper smooth algebraic variety over $\overline{\Q}_{p}$ with a flat proper integral model $\mathcal{X}$ over $\overline{\Z}_{p}$. Assume further, that we have a vector bundle $\mathcal{E}$ on $\mathcal{X}_{\mathcal{O}_{\C_p}}$ with numerically flat reduction. Then one of the main results in \cite{DW3} is the following:
\begin{prop}\cite[Theorem 7.1]{DW3}\label{propdw}
Fix $n\geq 1$. Then there exists an open cover $\{U_{i}\}$ of $X$, such that for every $i$ there is a proper surjective map $f_{i}:\mathcal{Y}_{i}\to \mathcal{X}$ which is finite \'etale over $U_{i}$ and is such that $f_{i}^{*}\mathcal{E}$ is trivial mod $p^{n}$.
\end{prop}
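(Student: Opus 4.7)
The plan is to combine the structure theorem for numerically flat bundles on the special fiber (Theorem \ref{thm31}) with a local Frobenius lift argument on the smooth model $X$, and then inductively trivialize mod $p^n$ by killing obstructions with further covers.

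First I would apply Theorem \ref{thm31} to $\mathcal{E} \otimes \overline{\F}_p$: this yields a finite \'etale cover $h_0: \mathcal{Z}_0 \to \mathcal{X} \times_{\mathcal{O}_{\C_p}} \overline{\F}_p$ and an integer $e \geq 0$ such that $F^{e*}_{\mathcal{Z}_0}\, h_0^*(\mathcal{E}_{\overline{\F}_p}) \cong \mathcal{O}_{\mathcal{Z}_0}^r$. Since finite \'etale covers deform uniquely to the thickening, this lifts to a finite \'etale cover $h: \mathcal{Z} \to \mathcal{X}_{\mathcal{O}_{\C_p}}$. The immediate obstacle is that the Frobenius pullback $F^{e*}$ has no direct algebraic analogue in mixed characteristic, so one cannot simply pull back $h$ along it.

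To get around this I would use smoothness of $X$ crucially: cover $X$ by affine opens $\{U_i\}$ small enough that each admits an \'etale map to affine space, so that the formal completion $\hat{\mathcal{U}}_i$ of a suitable integral model inside $\mathcal{X}_{\mathcal{O}_{\C_p}}$ carries an endomorphism $\phi_i$ lifting the $e$-th power of absolute Frobenius (such lifts exist locally by lifting Frobenius coordinate-wise along \'etale charts). An algebraization and extension-by-proper-modification argument then produces a scheme $\mathcal{V}_i$ over $\mathcal{X}$ whose restriction to $U_i$ realizes $\phi_i^e$ and which is proper surjective globally. Composing with $h$ yields the desired $f_i: \mathcal{Y}_i \to \mathcal{X}$, proper surjective, finite \'etale over $U_i$, and such that $f_i^*\mathcal{E}$ is trivial modulo $p$.

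The final step is to upgrade trivialization modulo $p$ to trivialization modulo $p^n$. I would argue by induction on $k$ using the exact sequence
$$0 \to M_r(\mathcal{O}_{\mathcal{Y}_i}/p) \to GL_r(\mathcal{O}_{\mathcal{Y}_i}/p^{k+1}) \to GL_r(\mathcal{O}_{\mathcal{Y}_i}/p^{k}) \to 1,$$
which shows that the obstruction to lifting a trivialization from level $k$ to level $k+1$ lives in $H^1(\mathcal{Y}_i, M_r(\mathcal{O}_{\mathcal{Y}_i}/p))$. Each such coherent cohomology class can be killed after pullback along a suitable further finite \'etale cover of $\mathcal{Y}_i$, in the spirit of the primitive comparison argument used in Theorem \ref{tf}, although in the algebraic setting one must work harder --- probably invoking additional Frobenius iterates or auxiliary Artin--Schreier covers --- to compensate for the absence of a clean pro-\'etale comparison.

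The main obstacle, I expect, is the geometric construction of Step 2: globalizing the formal Frobenius lifts into honest proper surjective maps $\mathcal{Y}_i \to \mathcal{X}$ that remain finite \'etale exactly over the open $U_i$. This is the place where the smoothness of $X$ is used essentially, and the careful interaction between the Frobenius-twist part and the \'etale cover $\mathcal{Z}$ is delicate. The inductive killing of obstructions at each $p^k$-level is then technical but mechanical in comparison.
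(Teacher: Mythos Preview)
The paper does not give its own proof of this proposition: it is quoted verbatim as \cite[Theorem 7.1]{DW3} and used as a black box in the comparison with the Deninger--Werner construction. So there is no proof in the paper to compare against; your proposal is an attempt to reconstruct the argument of \cite{DW3} itself.

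That said, your sketch has the right architecture for Steps 1 and 2 (and indeed the construction in \cite{DW3} proceeds by first trivializing the Frobenius pullback on the special fiber via Theorem \ref{thm31}, then realizing the Frobenius locally via maps on integral models using smoothness). Step 3, however, has a genuine gap. The obstruction you write down lives in coherent cohomology $H^1(\mathcal{Y}_i, \mathcal{O}_{\mathcal{Y}_i}/p)$, and finite \'etale covers do \emph{not} kill coherent cohomology classes in general. The argument of Theorem \ref{tf} works precisely because on the pro-\'etale site the primitive comparison theorem identifies $H^1(\mathcal{O}^+/p)$ almost with $H^1_{\acute{e}t}(\F_p)\otimes\mathcal{O}_{\textbf{C}}/p$, and \'etale $\F_p$-classes can be killed on finite \'etale covers; there is no analogue of this on the Zariski site of the integral model. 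Your parenthetical ``probably invoking additional Frobenius iterates or auxiliary Artin--Schreier covers'' is gesturing in the right direction---in \cite{DW3} one really does use further Frobenius twists and proper (not merely finite \'etale) covers at each inductive stage---but as written this is not a mechanism, and the step does not go through with finite \'etale covers alone.
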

One can then moreover assume that $\mathcal{Y}_{i}$ is a good model in the sense of \cite[Definition 3.5]{DW3}. Using this result they construct a parallel transport functor 
\begin{center}
$\alpha^{DW}_{n, i}(\mathcal{E}):\Pi_{1}(U_{i})\to Free_{r}(\mathcal{O}_{\C_p}/p^{n})$
\end{center} 
for every $i$, which is then shown to glue to a functor $\alpha^{DW}_{n}(\mathcal{E})$ from $\Pi_{1}(X)$, which is again functorial in $\mathcal{E}$.\\
The construction of $\alpha^{DW}_{n, i}(\mathcal{E})$ is of course as above: For any \'etale path $\gamma$ from $x$ to $x'\in U_{i}(\C_{p})$ one gets an isomorphism 
\begin{center}
$
\Gamma(\overline{x}^{*}\mathcal{E}/p^{n})=\Gamma(\overline{y}^{*}f_{i}^{*}(\mathcal{E}/p^{n})\xrightarrow{\cong} \Gamma(f_{i}^{*}(\mathcal{E}/p^{n}))\xrightarrow{\cong}\Gamma((\overline{\gamma y})^{*}f_{i}^{*}(\mathcal{E}/p^{n}))=\Gamma(\overline{x}'^{*}\mathcal{E}/p^{n})
$
\end{center}
where $\overline{x},\cdots$ denotes the specialization with respect to the integral model $\mathcal{X}$ (resp. $\mathcal{Y}_{i}$).
Taking the projective limit one gets $\alpha^{DW}_{\mathcal{O}_{\C_p}}(\mathcal{E}):\Pi_{1}(X)\to Free_{r}(\mathcal{O}_{\C_p})$.
Now let $\hat{\mathcal{X}}$ be the admissible formal scheme obtained by completing $\mathcal{X}_{\mathcal{O}_{\C_p}}$ along its special fiber. We denote by $X^{an}$ the adic space generic fiber of $\hat{\mathcal{X}}$. $X^{an}$ then coincides with the analytification of $X_{\C_{p}}$. We have $\Pi_{1}(X)\cong \Pi_{1}(X^{an})$ by Remark \ref{lequ}. Let $\mathcal{E}\in \mathcal{B}^{s}(\mathcal{X}_{\mathcal{O}_{\C_p}})$. Pulling back $\mathcal{E}$ to $\hat{\mathcal{X}}$ gives an object $\tilde{\mathcal{E}}$ in $\mathcal{B}^{s}(\hat{\mathcal{X}})$.\\
Pulling back $\tilde{\mathcal{E}}$ further to the pro-\'etale site gives an object $\mathcal{E}^{+}$ of $\mathcal{B}^{p\acute{e}t}(\mathcal{O}^{+}_{X^{an}})$.\\
Denote by $\tilde{\alpha}_{\mathcal{O}_{\C_p}}$ the composition
\begin{center}
$\tilde{\alpha}_{\mathcal{O}_{\C_p}}:\mathcal{B}^{s}(\mathcal{X}_{\mathcal{O}_{\C_p}})\to\mathcal{B}^{s}(\hat{\mathcal{X}})\xrightarrow{sp^{*}}\mathcal{B}^{p\acute{e}t}(\mathcal{O}^{+}_{(X^{an})_{an}})\xrightarrow{\alpha_{\mathcal{O}_{\C_p}}}Rep_{\Pi_{1}(X)}(\mathcal{O}_{\mathbb{C}_{p}})$
\end{center}
where $sp^{*}(\mathcal{E})=sp^{-1}(\mathcal{E})\otimes \mathcal{O}^{+}_{(X^{an})_{an}}$, and $sp:(X^{an})_{an}\to \hat{\mathcal{X}}_{Zar}$ denotes the specialization.
\begin{thm}\label{dwthm}
The functors $\tilde{\alpha}_{\mathcal{O}_{\C_p}}$ and $\alpha^{DW}_{\mathcal{O}_{\C_p}}$ are naturally isomorphic.
\end{thm}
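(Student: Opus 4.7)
The plan is to work modulo $p^{n}$ for each $n\geq 1$, realize both parallel transports on a common finite \'etale cover supplied by Proposition \ref{propdw}, and then take the inverse limit. Fixing $n$, this proposition gives an open cover $\{U_{i}\}$ of $X$ and proper surjective maps $f_{i}:\mathcal{Y}_{i}\to \mathcal{X}$ which are finite \'etale over $U_{i}$ and satisfy $f_{i}^{*}\mathcal{E}/p^{n}\cong \mathcal{O}_{\mathcal{Y}_{i}}^{r}/p^{n}$. Both $\alpha^{DW}_{\mathcal{O}_{\C_{p}}}$ and $\tilde{\alpha}_{\mathcal{O}_{\C_{p}}}$ are functorial under the open immersions $U_{i}\hookrightarrow X$, so it suffices to compare them over a single $U_{i}$. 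On generic fibers, $Y_{i}^{an}\to U_{i}^{an}$ is a finite \'etale cover of rigid analytic varieties; via the natural map $sp^{-1}\mathcal{O}_{\hat{\mathcal{X}}}/p^{n}\to \mathcal{O}^{+}_{X^{an}}/p^{n}$ the fixed trivialization of $f_{i}^{*}\mathcal{E}/p^{n}$ induces a trivialization of $sp^{*}\mathcal{E}^{+}/p^{n}$ over $Y_{i}^{an}$, so $Y_{i}^{an}$ also serves as a finite \'etale trivializing cover for the new construction.

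Against this common choice, both constructions reduce to pullback of global sections along a path. For an \'etale path $\gamma$ from $x$ to $x'$ in $U_{i}(\C_{p})$ and a lift $y$ of $x$ to $Y_{i}^{an}$ (setting $y':=\gamma y$), the DW formula reads
\[
\alpha^{DW}_{n}(\mathcal{E})(\gamma)\colon \Gamma(\bar{x}^{*}\mathcal{E}/p^{n})\xleftarrow{\bar{y}^{*}}\Gamma(\mathcal{Y}_{i},f_{i}^{*}\mathcal{E}/p^{n})\xrightarrow{\bar{y}'^{*}}\Gamma(\bar{x}'^{*}\mathcal{E}/p^{n}),
\]
while $\tilde{\alpha}_{\mathcal{O}_{\C_{p}}}(\mathcal{E})(\gamma)/p^{n}$ is the analogous composition with $\bar{y},\bar{y}'$ replaced by the pro-\'etale points $y,y'$ and with $\Gamma(\mathcal{Y}_{i},f_{i}^{*}\mathcal{E}/p^{n})$ replaced by $\Gamma(Y_{i}^{an},\widehat{sp^{*}\mathcal{E}^{+}})/p^{n}$. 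Comparing the two thus amounts to comparing the middle and outer terms through the specialization morphism.

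The main technical point — and the expected obstacle — is to make this compatibility precise in light of the fact that $\tilde{\alpha}_{\mathcal{O}_{\C_{p}}}$ naturally sits inside an almost parallel transport (cf.\ diagram (\ref{diag})). Following the proof of Lemma \ref{lem24}, fixing the trivialization identifies $\Gamma(Y_{i}^{an},\widehat{sp^{*}\mathcal{E}^{+}})/p^{n}$ with the canonical copy of $(\mathcal{O}_{\C_{p}}/p^{n})^{r}$ inside $\Gamma(Y_{i}^{an},sp^{*}\mathcal{E}^{+}/p^{n})_{*}\cong (\mathcal{O}_{\C_{p}}/p^{n})^{r}_{*}$, and the specialization morphism identifies $\Gamma(\mathcal{Y}_{i},f_{i}^{*}\mathcal{E}/p^{n})$ with the same integral sub-object of the almost module. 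An identical diagram chase at the point-wise fibers $x$, $x'$ shows that specialization intertwines $\bar{y}^{*}$ with $y^{*}$ and $\bar{y}'^{*}$ with $y'^{*}$. Combining these compatibilities, the two mod-$p^{n}$ parallel transports coincide on every \'etale path; compatibility in $n$ and with composition of paths is then tautological, and passing to the inverse limit gives the desired natural isomorphism $\tilde{\alpha}_{\mathcal{O}_{\C_{p}}}\cong \alpha^{DW}_{\mathcal{O}_{\C_{p}}}$.
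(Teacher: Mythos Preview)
Your proposal is correct and follows essentially the same route as the paper: work modulo $p^{n}$, use the covers supplied by Proposition \ref{propdw}, embed both transports into the almost parallel transport $\alpha^{a}_{n}$ via diagram \eqref{diag}, identify them with the same integral sub-object of $(\mathcal{E}^{+}_{x}/p^{n})_{*}$, and pass to the limit. The paper differs only in two minor bookkeeping points: it refines the cover by taking a fiber product with a good model $\mathcal{Z}_{n}$ of a finite \'etale cover $Z_{n}\to X_{\C_p}$ trivializing $\mathcal{E}^{+}/p^{n}$ (rather than relying on $Y_{i}^{an}$ alone), and it explicitly invokes the Seifert--van~Kampen result \cite[Theorem 4.1]{DW3} to glue the local comparisons over the $U_{i}$, a step you pass over with the phrase ``it suffices to compare them over a single $U_{i}$''.
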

\begin{proof}
We will show that $\tilde{\alpha}_{\mathcal{O}_{\C_p}}(\mathcal{E})$ is isomorphic to $\alpha^{DW}_{\mathcal{O}_{\C_p}}(\mathcal{E})$. Functoriality in $\mathcal{E}$ will be left to the reader.\\
First fix $n\geq 1$, and let $\{U_{i} \}$ be an open cover of $X$ and $\{\mathcal{Y}_{i}\}$ the associated proper, surjective covers trivializing $\mathcal{E}/p^{n}$ as in Proposition \ref{propdw}. Denote by $Y_{i}$ the generic fiber of $\mathcal{Y}_{i}$. Let $Z_{n}\to X_{\C_p}$ be a finite \'etale cover trivializing $\mathcal{E}^{+}/p^{n}$. Let $\mathcal{Z}_{n}\to \mathcal{X}_{\mathcal{O}_{\C_p}}$ be a good model for $Z_{n}$ in the sense of \cite[Definition 3.5]{DW3}. The cover $f_{i}:\mathcal{Z}_{n}\times_{\mathcal{X}_{\mathcal{O}_{\C_p}}}\mathcal{Y}_{i \mathcal{O}_{\C_p}}\to \mathcal{X}_{\mathcal{O}_{\C_p}}$ is still a trivializing cover for $\mathcal{E}/p^{n}$ and finite \'etale over $U_{i}$. We can moreover assume that it is connected; otherwise one uses \cite[Lemma 3.12]{DW3} to find a connected cover dominating $\mathcal{Z}_{n}\times_{\mathcal{X}_{\mathcal{O}_{\C_p}}}\mathcal{Y}_{i \mathcal{O}_{\C_p}}$ which is still finite \'etale over $U_{i}$. Denote by $f_{i}^{an}$ the analytification of $f_{i}\otimes \C_p$. Then $f_{i}^{an}$ trivializes $\mathcal{E}^{+}/p^{n}$ and $(\tilde{\alpha}_{\mathcal{O}_{\C_p}}/p^{n})\vert_{U_{i}}$ can be realized on $f_{i}^{an*}\mathcal{E}^{+}/p^{n}$. Let $x, x'\in U_{i}(\C_{p})$ and let $\gamma\in Mor(x, x')$ be an \'etale path. Consider the following commutative diagram of $\mathcal{O}_{\C_p}/p^{n}$-modules: 
\begin{center}
$\xymatrix@C=1em{
(\mathcal{E}/p^{n})_{\overline{x}} \ar[r]^-{=}\ar@{^{(}->}[d] & \Gamma(\overline{y}^{*}f_{i}^{*}(\mathcal{E}/p^{n}))\ar[r]^-{\cong}\ar@{^{(}->}[d] & \Gamma(f_{i}^{*}(\mathcal{E}/p^{n}))\ar[r]^-{\cong}\ar@{^{(}->}[d]& \Gamma((\overline{\gamma y})^{*}f_{i}^{*}(\mathcal{E}/p^{n}))\ar@{^{(}->}[d]\ar[r]^-{=} & (\mathcal{E}/p^{n})_{\overline{x}'} \ar@{^{(}->}[d] 
\\
(\mathcal{E}^{+}_{x}/p^{n})_{*} \ar[r]^-{=} & \Gamma(y^{*}f^{an*}_{i}(\mathcal{E}^{+}/p^{n}))_{*}\ar[r]^-{\cong} & \Gamma(f^{an*}_{i}(\mathcal{E}^{+}/p^{n}))_{*}\ar[r]^-{\cong}& \Gamma((\gamma y)^{*}f^{an*}_{i}(\mathcal{E}^{+}/p^{n}))_{*}\ar[r]^-{=} & (\mathcal{E}^{+}_{x'}/p^{n})_{*}
}$
\end{center}
where the upper row is $(\alpha^{DW}_{n, i}(\mathcal{E})(\gamma))$ and the lower row is $\alpha^{a}_{n}(\mathcal{E}^{+}/p^{n})(\gamma)$ and $y$ is a point of $(\mathcal{Z}_{n}\times_{\mathcal{X}}\mathcal{Y}_{i})_{\C_p}$ above $x$, and $\overline{x}\in \mathcal{X}(\mathcal{O}_{\C_p})$ is the specialization of $x$.\\
For the construction of the vertical maps note that if $\mathcal{Z}$ is a proper scheme with associated formal scheme $\hat{\mathcal{Z}}$, one has a composition of morphisms of ringed sites
\begin{center}
$(Z_{pro\acute{e}t}, \mathcal{O}^{+}_{Z})\to (\hat{\mathcal{Z}}_{Zar}, \mathcal{O}_{\hat{\mathcal{Z}}})\to (\mathcal{Z}_{Zar}, \mathcal{O}_{\mathcal{Z}})$.
\end{center}
The vertical maps are then given by pulling back global sections along this and embedding into almost elements at the end. As everything is functorial, the diagram commutes. All vertical arrows are canonical almost isomorphisms.\\
More precisely, going through all identifications, one checks that this takes $\alpha^{DW}_{n, i}(\mathcal{E})(\gamma)$ isomorphically to $(\tilde{\alpha}/p^{n})(\mathcal{E})(\gamma)\xhookrightarrow{} \alpha^{a}_{n}(\mathcal{E}^{+}/p^{n})(\gamma)$ (see diagram \eqref{diag}): Indeed, the map on the fiber $(\mathcal{E}/p^{n})_{\overline{x}}\xhookrightarrow{} (\mathcal{E}^{+}_{x}/p^{n})_{*}$ factors of course through $(\mathcal{E}^{+}_{x}/p^{n})\xhookrightarrow{} (\mathcal{E}^{+}_{x}/p^{n})_{*}$. Also, everything is compatible with the composition of paths.\\
From this we get an isomorphism $(\alpha^{DW}_{n, i}(\mathcal{E})(\gamma))\cong (\tilde{\alpha}/p^{n})\vert_{U_i}$. But then, using the Seifert-van-Kampen result from \cite[Theorem 4.1]{DW3}, we see that $(\alpha^{DW}_{n})\cong (\tilde{\alpha}/p^{n})$ are isomorphic.\\
Passing to the $p$-adic completion we get the desired result.
\end{proof}
\subsection{The Hodge-Tate spectral sequence}
Keep assuming that $X$ is proper smooth over $Spa(\C_{p}, \mathcal{O}_{\C_p})$.
\begin{thm}\cite[Theorem 3.20]{Sch2}\label{thm46}
Let $E$ be a vector bundle on $X$ associated to an $\hat{\mathcal{O}}_{\C_p}$-local system $\mathbb{L}$.
Then there is a Hodge-Tate spectral sequence.
\begin{center}
$
E_{2}^{ij}=H^{i}(X, E\otimes \Omega_{X}^{j}(-j)) \Longrightarrow H^{i+j}(X_{pro\acute{e}t}, \mathbb{L})[\frac{1}{p}]
$
\end{center}
\end{thm}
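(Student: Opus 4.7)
The plan is to combine the general Hodge--Tate spectral sequence of Scholze (and its local-system version, worked out by Liu--Zhu and Scholze) with the vanishing of the Higgs field that was established in the proposition of Section 3.2 for Weil--Tate bundles. Throughout, let $\nu' : X_{pro\acute{e}t} \to X_{\acute{e}t}$ be the natural projection and $\mathcal{O}\C = gr^0(\mathcal{O}\mathbb{B}_{dR})$, equipped with its Higgs field $\theta$ with values in $\Omega^1_X(-1)$ coming from the connection on $\mathcal{O}\mathbb{B}_{dR}$.

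The first step is to recall that, for a proper smooth rigid analytic variety and any $\hat{\mathcal{O}}_{\C_p}$-local system $\mathbb{L}$, there is a natural filtration on $\mathbb{L}\otimes \mathcal{O}\mathbb{B}_{dR}$ whose associated graded pieces identify, via the Poincar\'e-lemma-type resolution of Scholze, with a Higgs complex
\[
\mathbb{L}\otimes \mathcal{O}\C \xrightarrow{\theta_{\mathbb{L}}} \mathbb{L}\otimes \mathcal{O}\C\otimes \Omega^1_X(-1) \xrightarrow{\theta_{\mathbb{L}}} \mathbb{L}\otimes \mathcal{O}\C \otimes \Omega^2_X(-2)\to \cdots
\]
on the pro-\'etale site. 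Pushing forward along $\nu'$ and combining with the almost comparison of Theorem \ref{ciso}, one obtains a convergent spectral sequence
\[
E_2^{ij} = H^i\bigl(X, \mathcal{H}^j(\mathbb{L}\otimes \mathcal{O}\C\otimes \Omega^\bullet_X(-\bullet))\bigr) \Longrightarrow H^{i+j}(X_{pro\acute{e}t}, \mathbb{L})[\tfrac{1}{p}],
\]
where the $\mathcal{H}^j$ refers to the cohomology of the Higgs complex after applying $\nu'_*$. At this abstract level, the spectral sequence is exactly what is recorded in the cited \cite[Theorem 3.20]{Sch2}.

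The second step is to use the hypothesis that $\mathbb{L}$ is Weil--Tate, associated to the vector bundle $E$. By the proposition at the end of Section 3.2, we have a canonical identification $\nu'_*(\mathcal{O}\C\otimes \mathbb{L})\cong E$ with \emph{vanishing} Higgs field $\theta_{\mathbb{L}}=0$. Since the Higgs field on $\mathcal{O}\C$ is $\hat{\mathcal{O}}_{X}$-linear on the image of $\hat{\mathcal{O}}_X$ and $\mathbb{L}\otimes \mathcal{O}\C\cong \widehat{\lambda^*E}\otimes_{\hat{\mathcal{O}}_X}\mathcal{O}\C$, the Higgs field on $\mathbb{L}\otimes \mathcal{O}\C\otimes \Omega^j_X(-j)$ is induced by the trivial one on $\widehat{\lambda^*E}$, so all differentials in the Higgs complex vanish. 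Using $\nu'_*\mathcal{O}\C = \mathcal{O}_{X_{\acute{e}t}}$, the projection formula yields
\[
\mathcal{H}^j\bigl(\nu'_*(\mathbb{L}\otimes \mathcal{O}\C\otimes \Omega^\bullet_X(-\bullet))\bigr) \cong E\otimes \Omega^j_X(-j),
\]
which gives the desired shape of the $E_2$-page.

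The main obstacle will be packaging the Higgs-complex version of Scholze's spectral sequence for $\hat{\mathcal{O}}_{\C_p}$-coefficients rather than the $\mathbb{Z}_p$-coefficients treated in \cite{Sch2}; however, this reduces to the $\mathbb{Z}_p$-statement by writing an $\mathcal{O}_{\C_p}/p^n$-local system as a colimit of $\mathcal{O}_K/p^n$-local systems (using Remark \ref{remake}) and passing to the inverse limit as in the proof of Theorem \ref{ciso}. A subsidiary point is to verify the vanishing of the Higgs field globally, but this is immediate from the proposition in Section 3.2 since the statement there is functorial on the \'etale site. Once these ingredients are in place, the spectral sequence of the statement follows directly.
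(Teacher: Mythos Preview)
Your route through $\mathcal{O}\C$ and the Higgs complex is both unnecessarily elaborate and contains a genuine error. The claim that ``all differentials in the Higgs complex vanish'' is false: the Higgs field $\theta$ on $\mathcal{O}\C$ is non-trivial as a map of pro-\'etale sheaves --- indeed, this non-triviality is exactly what makes the Poincar\'e-lemma resolution $0\to\hat{\mathcal{O}}_X\to\mathcal{O}\C\xrightarrow{\theta}\mathcal{O}\C\otimes\Omega^1_X(-1)\to\cdots$ exact. What the proposition at the end of Section~3.2 establishes is only that the Higgs field vanishes on the \emph{\'etale pushforward} $\nu'_*(\mathbb{L}\otimes\mathcal{O}\C)=E$, because $\theta$ restricts to zero on the subsheaf $\hat{\mathcal{O}}_X\subset\mathcal{O}\C$. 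Under your identification $\mathbb{L}\otimes\mathcal{O}\C\cong\widehat{\lambda^*E}\otimes_{\hat{\mathcal{O}}_X}\mathcal{O}\C$ the differential is $\mathrm{id}\otimes\theta$, not zero; if it were zero the complex could not be a resolution of $\mathbb{L}\otimes\hat{\mathcal{O}}_X$. So your computation of the $E_2$-page does not go through, and the spectral sequence you wrote down is not the right one in any case (the hypercohomology spectral sequence of the Higgs resolution has an $E_1$-page involving pro-\'etale cohomology of $\mathbb{L}\otimes\mathcal{O}\C\otimes\Omega^p(-p)$, which does not simplify).

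The paper's argument avoids $\mathcal{O}\C$ and $\mathcal{O}\mathbb{B}_{dR}$ altogether. One simply applies the Leray spectral sequence for $\nu:X_{pro\acute{e}t}\to X_{\acute{e}t}$ to the single sheaf $\hat{\mathcal{O}}_X\otimes\mathbb{L}$. Theorem~\ref{ciso} identifies the abutment with $H^{i+j}(X_{pro\acute{e}t},\mathbb{L})[\tfrac{1}{p}]$. For the $E_2$-page, the Weil--Tate hypothesis says $\hat{\mathcal{O}}_X\otimes\mathbb{L}\cong\nu^*E\otimes_{\mathcal{O}_X}\hat{\mathcal{O}}_X$, so the projection formula together with Scholze's computation $R^j\nu_*\hat{\mathcal{O}}_X\cong\Omega^j_{X_{\acute{e}t}}(-j)$ immediately gives $R^j\nu_*(\hat{\mathcal{O}}_X\otimes\mathbb{L})\cong E\otimes\Omega^j_X(-j)$. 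No Higgs complex, no vanishing-of-Higgs-field statement, and no reduction to $\mathcal{O}_K$-coefficients is needed; the point is that the Weil--Tate condition lets you factor $E$ out of $R^j\nu_*$ directly.
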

Recall that $\Omega_{X}^{j}(-j):=\Omega_{X}^{j}\otimes_{\hat{\Z}_p}\hat{\Z}_{p}(-1)^{\otimes j}$, where $\hat{\Z}_{p}(1):=\varprojlim_{n}\mu_{p^{n}}$ as pro-\'etale sheaves, and $\hat{\Z}_{p}(-1)$ denotes the dual of $\hat{\Z}_{p}(1)$.
\begin{proof}
By assumption $\hat{E}=\hat{\mathcal{O}}_{X}\otimes \mathbb{L}$. Let again $\nu:X_{pro\acute{e}t}\to X_{\acute{e}t}$ denote the canonical projection. Then the Cartan-Leray spectral sequence reads
\begin{center}
$
H^{i}(X_{\acute{e}t},  R^{j}\nu_{*}(\hat{\mathcal{O}}_{X}\otimes \mathbb{L}))\Longrightarrow H^{i+j}(X_{pro\acute{e}t}, \hat{\mathcal{O}}_{X}\otimes \mathbb{L})
$.
\end{center}
By Theorem \ref{ciso} we have $H^{i+j}(X_{pro\acute{e}t}, \hat{\mathcal{O}}_{X}\otimes \mathbb{L})\cong H^{i+j}(X_{pro\acute{e}t}, \mathbb{L})[\frac{1}{p}]$.\\
But now, by \cite[Proposition 3.23]{Sch2} there is an isomorphism $R^{j}\nu_{*}\hat{\mathcal{O}}_{X}\cong \Omega_{X_{\acute{e}t}}^{j}(-j)$, and hence
\begin{center}
$
 R^{j}\nu_{*}(\hat{\mathcal{O}}_{X}\otimes \mathbb{L})=R^{j}\nu_{*}(\nu^{*}E\otimes_{\mathcal{O}_{X}}\hat{\mathcal{O}}_{X})\cong E\otimes R^{j}\nu_{*}\hat{\mathcal{O}}_{X}\cong E\otimes \Omega_{X_{\acute{e}t}}^{j}(-j)
 $
\end{center}
by the projection formula.
\end{proof}
As usual one does not get a canonical splitting of this spectral sequence in general. However one does have the following:
\begin{prop}
Assume that $X$ is a proper smooth rigid analytic space over $K$, where $K/\Q_p$ is a finite extension. Let further $E$ be a vector bundle on $X$ such that $E_{\hat{\bar{K}}}$ is associated to an $\hat{\mathcal{O}}_{\C_p}$-local system $\mathbb{L}$. Then the Hodge-Tate spectral sequence degenerates canonically at $E_{2}$.
\end{prop}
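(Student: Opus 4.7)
The approach is to exploit Galois equivariance together with Tate's theorem on Hodge--Tate weights, in the spirit of Faltings' and Scholze's treatment of the Hodge--Tate decomposition for trivial coefficients. I will work throughout with the Galois group $G_K := \mathrm{Gal}(\bar K/K)$, which acts on the base change $X_{\hat{\bar K}}$ through its action on $\hat{\bar K}$.

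First I would upgrade the spectral sequence of \autoref{thm46} to a $G_K$-equivariant one. Since $E$ and $X$ are defined over $K$, the sheaves $E \otimes \Omega^j_X$ descend to $X$, and flat base change gives a canonical identification
\[
H^i\!\bigl(X_{\hat{\bar K}},\, E_{\hat{\bar K}} \otimes \Omega^j_{X_{\hat{\bar K}}}\bigr) \;\cong\; H^i(X,\, E \otimes \Omega^j_X) \otimes_K \hat{\bar K},
\]
so that $E_2^{ij} \cong H^i(X, E\otimes \Omega^j_X)\otimes_K \hat{\bar K}(-j)$ is a Hodge--Tate representation of pure weight $-j$. To make the target $H^{i+j}(X_{\mathrm{pro\acute et}}, \mathbb{L})[1/p]$ a $G_K$-representation, I need $\mathbb{L}$ itself to carry a $G_K$-equivariant structure compatible with the base-change isomorphism $E_{\hat{\bar K}}\otimes \hat{\mathcal O}_X \cong \hat{\mathcal O}_X \otimes \mathbb{L}$. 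This is where the full faithfulness of \autoref{full} (together with the isogeny version implicit in the proof of \autoref{ff}) enters: the canonical semilinear $G_K$-action on the left-hand side transports uniquely to $\mathbb{L}$ through the equivalence of categories, since both $\mathbb{L}$ and the $g^*\mathbb{L}$ land in the essential image. With this in place, the Cartan--Leray spectral sequence used to derive \autoref{thm46} is $G_K$-equivariant, and so is the Hodge--Tate spectral sequence.

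Next I would invoke Tate's theorem: for integers $m\neq n$ one has
\[
\operatorname{Hom}_{G_K\text{-}\hat{\bar K}\text{-mod}}\!\bigl(\hat{\bar K}(m), \hat{\bar K}(n)\bigr) = 0,
\]
and hence any $G_K$-equivariant $\hat{\bar K}$-linear map between sub-quotients of pure Hodge--Tate representations of distinct weights must vanish. The higher differential $d_r\colon E_r^{ij} \to E_r^{i+r,\, j-r+1}$ is such a map between pieces of weights $-j$ and $-j+r-1$; for $r \ge 2$ these weights differ, so $d_r = 0$. Iterating, the sequence degenerates at the $E_2$-page.

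Finally, for the canonicity of the splitting, note that degeneration yields a $G_K$-stable filtration $\operatorname{Fil}^\bullet$ on $H^n(X_{\mathrm{pro\acute et}}, \mathbb{L})[1/p]$ whose graded pieces are pure Hodge--Tate of pairwise distinct weights $-j$ for $i+j=n$. Any such filtration on a Hodge--Tate representation splits canonically via the weight decomposition, namely
\[
H^n \;=\; \bigoplus_{j} \bigl(H^n \otimes \hat{\bar K}(j)\bigr)^{G_K} \otimes_K \hat{\bar K}(-j),
\]
and this splitting recovers the $E_2$-terms. I expect the main obstacle to lie not in the Tate-theoretic endgame, which is essentially formal, but in carefully verifying that the $G_K$-equivariant structure on $\mathbb{L}$ transferred from $E$ is strictly compatible with the Cartan--Leray machinery entering the proof of \autoref{thm46}; in particular one must check that the identification $R^j\nu_*\hat{\mathcal O}_X \cong \Omega^j_{X_{\acute et}}(-j)$ of \cite[Prop.\ 3.23]{Sch2} is itself Galois-equivariant with the stated Tate twist, which is standard but needs to be spelled out.
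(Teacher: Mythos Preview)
Your proposal is correct and follows essentially the same approach as the paper: both arguments use that the Cartan--Leray spectral sequence is $G_K$-equivariant, identify the $E_2$-terms via base change as $H^i(X, E\otimes\Omega^j_X)\otimes_K \C_p(-j)$, and then kill the differentials using Tate's theorem $\operatorname{Hom}_{G_K}(\C_p(-j), \C_p(-j')) = 0$ for $j\neq j'$. The paper's proof is terser and simply asserts the $G_K$-equivariance of the cohomology groups without dwelling on how $\mathbb{L}$ acquires its Galois structure, whereas you spell out (correctly) that this comes from the vector bundle side via full faithfulness; you also make the canonical splitting explicit via the Hodge--Tate weight decomposition, which the paper leaves implicit in the word ``canonically.''
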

\begin{proof}
Now there is a semi-linear $G_{K}:=Gal(\bar{K}/K)$-action on the cohomology groups in Theorem \ref{thm46}, and in particular the differentials in the Cartan-Leray spectral sequence will be invariant under this action. But then, as 
\begin{center}
$H^{i}(X_{\hat{\bar{K}}}, E_{\hat{\bar{K}}}\otimes \Omega^{j}_{X_{\hat{\bar{K}}}}(-j))=H^{i}(X, E\otimes \Omega^{j}_{X})\otimes_{K} \C_p(-j)$
\end{center}
by base change for cohomology, one gets that all differentials are zero, as\\$Hom_{G_{K}}(\C_{p}(-j), \C_p(-j'))=0$ for $j\neq j'$ by Tate's theorem.
\end{proof}
In the general case (i.e. when $X$ is not defined over a discretely valued field) Guo has shown that any lifting of $X$ to $B_{dR}^{+}/\xi^{2} $ provides a non-canonical splitting 
\begin{center}
$R\nu_{*} \hat{\mathcal{O}}_{X}\cong \bigoplus_{i=0}^{dimX} \Omega_{X_{\acute{e}t}}^{i}(-i)[-i]$
\end{center}
in the derived category $D(\mathcal{O}_{X_{\acute{e}t}})$ (see \cite[Theorem 7.2.5]{guo}). The fact that any proper smooth $X$ admits such a lift can be shown by a spreading out argument (see \cite[Corollary 13.16]{BMS} - see also \cite[\S 7.4]{guo})
One then gets the following:
\begin{prop}
Let $E$ be a Weil-Tate vector bundle on $X$ associated to the $\hat{\mathcal{O}}_{\C_p}$-local system $\mathbb{L}$. Then the Hodge-Tate spectral sequence for $E$ degenerates (non-canonically).
\end{prop}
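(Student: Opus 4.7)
The plan is to show that the Cartan--Leray spectral sequence used to derive the Hodge--Tate spectral sequence in the proof of \autoref{thm46} already splits at the level of the derived category, upstream of passing to cohomology. Once this is established, the degeneration is purely formal.

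First, since $E$ is Weil--Tate with associated local system $\mathbb{L}$, we have a canonical isomorphism $\hat{E}\cong \hat{\mathcal{O}}_X\otimes_{\hat{\mathcal{O}}_{\C_p}}\mathbb{L}$ on the pro-\'etale site. Let $\nu:X_{pro\acute{e}t}\to X_{\acute{e}t}$ denote the natural projection. I would next compute $R\nu_*(\hat{\mathcal{O}}_X\otimes \mathbb{L})$ via the projection formula: since $\nu^{*}E$ is flat over $\nu^{-1}\mathcal{O}_X$ (as $E$ is locally free) and $\hat{\mathcal{O}}_X\otimes \mathbb{L}\cong \nu^{*}E\otimes_{\nu^{-1}\mathcal{O}_X}\hat{\mathcal{O}}_X$, the projection formula gives
\begin{equation*}
R\nu_*(\hat{\mathcal{O}}_X\otimes \mathbb{L}) \;\cong\; E\otimes_{\mathcal{O}_{X_{\acute{e}t}}}^{L} R\nu_{*}\hat{\mathcal{O}}_X.
\end{equation*}

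Now I invoke Guo's splitting cited immediately before the statement: after choosing a lift of $X$ to $B_{dR}^{+}/\xi^{2}$, there is an isomorphism
\begin{equation*}
R\nu_{*}\hat{\mathcal{O}}_X \;\cong\; \bigoplus_{j=0}^{\dim X}\Omega^{j}_{X_{\acute{e}t}}(-j)[-j]
\end{equation*}
in $D(\mathcal{O}_{X_{\acute{e}t}})$. Combined with the projection formula above, and using that tensoring with the locally free sheaf $E$ commutes with direct sums and shifts, this yields
\begin{equation*}
R\nu_{*}(\hat{\mathcal{O}}_X\otimes \mathbb{L}) \;\cong\; \bigoplus_{j=0}^{\dim X} \bigl(E\otimes \Omega^{j}_{X_{\acute{e}t}}(-j)\bigr)[-j].
\end{equation*}

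Applying $R\Gamma(X_{\acute{e}t},-)$ and using \autoref{ciso} to identify $H^{n}(X_{pro\acute{e}t},\hat{\mathcal{O}}_X\otimes \mathbb{L})$ with $H^{n}(X_{pro\acute{e}t},\mathbb{L})[\tfrac{1}{p}]$, the above direct sum decomposition gives
\begin{equation*}
H^{n}(X_{pro\acute{e}t}, \mathbb{L})[\tfrac{1}{p}] \;\cong\; \bigoplus_{i+j=n} H^{i}(X, E\otimes \Omega^{j}_{X}(-j)),
\end{equation*}
which is exactly the degeneration of the Hodge--Tate spectral sequence at $E_{2}$ (the abutment coincides with the $E_2$-page as graded pieces). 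The splitting is non-canonical because it depends on Guo's lift. The main conceptual work is checking the projection-formula step in the derived pro-\'etale setting, but since $E$ is locally free this is standard; the real input of the proof is Guo's theorem, which does all the heavy lifting.
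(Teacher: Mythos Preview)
Your proof is correct and follows essentially the same route as the paper: both arguments use the projection formula to write $R\nu_*(\hat{\mathcal{O}}_X\otimes\mathbb{L})\cong E\otimes R\nu_*\hat{\mathcal{O}}_X$, then invoke Guo's splitting of $R\nu_*\hat{\mathcal{O}}_X$ coming from a $B_{dR}^{+}/\xi^{2}$-lift, and finally apply $R\Gamma$ together with \autoref{ciso} to obtain the decomposition of $H^{n}(X_{pro\acute{e}t},\mathbb{L})[\tfrac{1}{p}]$. The only cosmetic difference is that you phrase the projection formula over $\nu^{-1}\mathcal{O}_X$ whereas the paper writes it over $\mathcal{O}_X$ (which is the same sheaf by \autoref{defn1}).
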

\begin{proof}
Using the projection formula, for any lift of $X$ to $B_{dR}^{+}/\xi^{2}$, one gets from \cite[Theorem 7.2.5]{guo}
\begin{center}
$R\nu_{*}(\hat{\mathcal{O}}_{X}\otimes_{\mathcal{O}_{X}}\nu^{*}E)\cong R\nu_{*}\hat{\mathcal{O}}_{X}\otimes_{\mathcal{O}_{X_{\acute{e}t}}} E\cong \bigoplus_{i=0}^{dimX} \Omega_{X_{\acute{e}t}}^{i}(-i)[-i]\otimes E$
\end{center}
as objects in the derived category. Remark here that the tensor products (and pullback along $\nu^{*}$) are derived, as $E$ is finite locally free.\\
Now the cohomology group 
\begin{center}
$H^{n}(X_{\acute{e}t}, \mathbb{L})[\frac{1}{p}]\cong H^{n}(X_{pro\acute{e}t}, \hat{\mathcal{O}}_{X}\otimes \mathbb{L})$
\end{center}
is given as the $n$-th cohomology group of the complex 
\begin{center}
$R\Gamma(X_{\acute{e}t}, R\nu_{*}(\hat{\mathcal{O}}_{X}\otimes \mathbb{L}))=R\Gamma(X_{\acute{e}t}, R\nu_{*}(\hat{\mathcal{O}}_{X}\otimes_{\mathcal{O}_{X}}\nu^{*}E))$.
\end{center}
Plugging in the first isomorphism above yields an isomorphism 
\begin{center}
$H^{n}(X_{\acute{e}t}, R\nu_{*}(\hat{\mathcal{O}}_{X}\otimes_{\mathcal{O}_{X}}\nu^{*}E))\cong \bigoplus_{j=0}^{dimX}H^{n-j}(X_{\acute{e}t}, \Omega_{X_{\acute{e}t}}^{j}(-j)\otimes E)$.
\end{center}
\end{proof}
For the constant local system, consider the map 
$\alpha:H^{1}(X, \mathcal{O}_{{X}_{\hat{\bar{K}}}})\to H_{\acute{e}t}^{1}(X, \Z_p)\otimes \hat{\bar{K}}$ from the Hodge-Tate decomposition. In \cite{DW2} Deninger and Werner showed that, if $X=A$ is an abelian variety with good reduction, there is a commutative diagram
\begin{center}
$\xymatrix{
H^{1}(A, \mathcal{O}_{{A}_{\hat{\bar{K}}}}) \ar[d]^{\cong}\ar[r]^{\alpha}  & H_{\acute{e}t}^{1}(A, \Z_p)\otimes \hat{\bar{K}}\ar[d]^{\cong}\\
Ext^{1}(\mathcal{O}_{{A}_{\hat{\bar{K}}}}, \mathcal{O}_{{A}_{\hat{\bar{K}}}})\ar[r]^{DW} & Ext^{1}(\hat{\C}_p, \hat{\C}_p)
}$
\end{center}
where the map below means applying the Deninger-Werner functor to a unipotent rank $2$ vector bundle, which gives a unipotent rank $2$ local system.\\
We can show that this generalizes to arbitrary extensions on any proper smooth rigid analytic variety. Namely, let $X$ be any proper smooth rigid analytic variety over $Spa(\C_p, \mathcal{O}_{\C_p})$. Then for any $E\in \mathcal{B}^{s}(X)$ the Hodge-Tate spectral sequence gives an injection \\$H^{1}(X, E)\to H^{1}_{\acute{e}t}(X, \mathbb{L}_{E})$. We then get the following
\begin{prop}
For any $E, E'\in \mathcal{B}^{s}(X)$ there is a commutative diagram
\begin{center}
$\xymatrix{
H^{1}(X, \mathcal{H}om(E, E')) \ar[d]^{\cong}\ar[r]^{\alpha}  & H_{\acute{e}t}^{1}(X, \mathcal{H}om(\mathbb{L}_{E}, \mathbb{L}_{E'}))\ar[d]^{\cong}\\
Ext^{1}(E, E')\ar[r]^{DW} & Ext^{1}(\mathbb{L}_{E}, \mathbb{L}_{E'})
}$
\end{center}
where for any $F\in \mathcal{B}^{s}(X)$ we denote by $\mathbb{L}_{F}$ the local system obtained by applying the functor $DW$, and $\alpha$ denotes the map coming from the Hodge-Tate spectral sequence.
\end{prop}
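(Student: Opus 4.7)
The plan is to evaluate both horizontal maps on a class $[\xi]\in\mathrm{Ext}^{1}(E,E')$ represented by an extension $\xi\colon 0\to E'\to F\to E\to 0$, and to match their images through a common pro-\'etale extension class with $\hat{\mathcal{O}}_{X}$-coefficients. Since $\mathcal{B}^{s}(X)$ is closed under extensions, $F\in\mathcal{B}^{s}(X)$, and exactness of $DW$ together with its compatibility with internal homs yields an extension $DW(\xi)\colon 0\to \mathbb{L}_{E'}\to \mathbb{L}_{F}\to \mathbb{L}_{E}\to 0$, whose Yoneda class in $\mathrm{Ext}^{1}(\mathbb{L}_{E},\mathbb{L}_{E'})=H^{1}_{\acute{e}t}(X,\mathcal{H}om(\mathbb{L}_{E},\mathbb{L}_{E'}))$ is $DW([\xi])$, under the canonical identification $\mathcal{H}om(\mathbb{L}_{E},\mathbb{L}_{E'})\cong\mathbb{L}_{\mathcal{H}om(E,E')}$.

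I would then apply the Weil-Tate identifications $\lambda^{*}G\otimes\hat{\mathcal{O}}_{X}\cong\hat{\mathcal{O}}_{X}\otimes\mathbb{L}_{G}$ from Lemma~\ref{lem24} (after inverting $p$) termwise to the pro-\'etale pullback of $\xi$, producing an exact sequence of pro-\'etale $\hat{\mathcal{O}}_{X}$-modules
$$\hat{\xi}\colon 0\to \hat{\mathcal{O}}_{X}\otimes\mathbb{L}_{E'}\to \hat{\mathcal{O}}_{X}\otimes\mathbb{L}_{F}\to \hat{\mathcal{O}}_{X}\otimes\mathbb{L}_{E}\to 0$$
which, by construction, coincides with $DW(\xi)\otimes\hat{\mathcal{O}}_{X}$. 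Its class $[\hat{\xi}]\in H^{1}(X_{pro\acute{e}t},\mathcal{H}om(\mathbb{L}_{E},\mathbb{L}_{E'})\otimes\hat{\mathcal{O}}_{X})$ therefore equals $\iota(DW([\xi]))$, where
$$\iota\colon H^{1}_{\acute{e}t}(X,\mathcal{H}om(\mathbb{L}_{E},\mathbb{L}_{E'}))[\tfrac{1}{p}]\xrightarrow{\cong} H^{1}(X_{pro\acute{e}t},\mathcal{H}om(\mathbb{L}_{E},\mathbb{L}_{E'})\otimes\hat{\mathcal{O}}_{X})$$
is the primitive comparison isomorphism of Theorem~\ref{ciso}.

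Recall from the proof of Theorem~\ref{thm46} that the Hodge-Tate spectral sequence is the Cartan-Leray spectral sequence for $\nu\colon X_{pro\acute{e}t}\to X_{\acute{e}t}$ applied to the sheaf $\mathcal{H}om(\mathbb{L}_{E},\mathbb{L}_{E'})\otimes\hat{\mathcal{O}}_{X}$, with abutment identified via $\iota$ with $H^{*}_{\acute{e}t}(X,\mathcal{H}om(\mathbb{L}_{E},\mathbb{L}_{E'}))[\tfrac{1}{p}]$. By the standard description of the edge map $E_{2}^{1,0}\to H^{1}$ of a Cartan-Leray spectral sequence, the composition $\iota\circ\alpha$ sends a Yoneda class on the \'etale site to the Yoneda class of its pro-\'etale pullback tensored with $\hat{\mathcal{O}}_{X}$. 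Applied to $[\xi]$, this produces exactly $[\hat{\xi}]$, so that $\iota(\alpha([\xi]))=\iota(DW([\xi]))$, and injectivity of $\iota$ gives $\alpha([\xi])=DW([\xi])$, which is the commutativity of the diagram.

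The main obstacle is the identification of the Cartan-Leray edge map on Yoneda classes with the pullback-and-tensor map $[\xi]\mapsto[\hat{\xi}]$. This is formal from the adjunction description of the edge map, together with the projection-formula identity $\nu_{*}(\nu^{*}\mathcal{H}om(E,E')\otimes\hat{\mathcal{O}}_{X})=\mathcal{H}om(E,E')$ already exploited in the proof of Theorem~\ref{thm46}; one just needs to track the compatibility of Yoneda $\mathrm{Ext}$-classes with pullback along the morphism of sites $\nu$ and with the map induced by the counit $\nu^{*}\nu_{*}\to\mathrm{id}$.
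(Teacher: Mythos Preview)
Your proposal is correct and follows essentially the same route as the paper: both factor $\alpha$ as the Cartan--Leray edge map followed by the primitive comparison isomorphism, use Lemma~\ref{lem24} to identify $DW(-)\otimes\hat{\mathcal{O}}_{X}$ with $\nu^{*}(-)\otimes\hat{\mathcal{O}}_{X}$, and then reduce to the (formal) check that the edge map carries an extension class to its pro-\'etale pullback tensored with $\hat{\mathcal{O}}_{X}$. The paper's proof is terser and dismisses this last step as ``clear,'' whereas you spell out why it follows from the adjunction description of the edge map and the projection formula.
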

\begin{proof}
The map $\alpha$ is given by the composition 
\begin{center}
$
H^{1}(X, \mathcal{H}om(E, E'))\xrightarrow{\beta}H^{1}(X, \mathcal{H}om(E, E')\otimes \hat{\mathcal{O}}_{X})\xrightarrow{com} H_{\acute{e}t}^{1}(X, \mathcal{H}om(\mathbb{L}_{E}, \mathbb{L}_{E'}))$
\end{center}
where $com$ is the comparison isomorphism from Theorem \ref{ciso} and $\beta$ is simply given as the map on $H^{1}$ associated to the injection of pro-\'etale sheaves
\begin{center}
$\nu^{*}\nu_{*}(\hat{\mathcal{O}}_{X}\otimes \mathcal{H}om(\mathbb{L}_{E}, \mathbb{L}_{E'}))\xhookrightarrow{} \hat{\mathcal{O}}_{X}\otimes \mathcal{H}om(\mathbb{L}_{E}, \mathbb{L}_{E'})$.
\end{center}
Now take an extension 
\begin{center}
$
e=(0\to E'\to E\to E''\to 0)
$
\end{center}
of vector bundles on $X$. By Lemma \ref{lem24} the functors $DW(-)\otimes \hat{\mathcal{O}}_{X}$ and $\nu^{*}(-)\otimes \hat{\mathcal{O}}_{X}$ are canonically isomorphic. So one only has to check that the map $\beta$ takes the extension $e$ to
\begin{center}
$
0\to \nu^{*}E'\otimes \hat{\mathcal{O}}_{X}\to \nu^{*}E\otimes \hat{\mathcal{O}}_{X}\to \nu^{*}E''\otimes \hat{\mathcal{O}}_{X}\to 0
$.
\end{center}
But this is clear.
\end{proof}
\addcontentsline{toc}{section}{References}

\bibliographystyle{plain}
Institut f\"ur Mathematik, 
    Goethe-Universit\"at Frankfurt, 
    60325 Frankfurt am Main, 
    Germany\\
    wuerthen@math.uni-frankfurt.de
\end{document}